\documentclass[pdflatex,sn-mathphys-num]{sn-jnl}

\usepackage{graphicx}%
\usepackage{multirow}%
\usepackage{amsmath,amssymb,amsfonts}%
\usepackage{amsthm}%
\usepackage{mathrsfs}%
\usepackage[title]{appendix}%
\usepackage{xcolor}%
\usepackage{textcomp}%
\usepackage{manyfoot}%
\usepackage{booktabs}%
\usepackage{algorithm}%
\usepackage{algorithmicx}%
\usepackage{algpseudocode}%
\usepackage{listings}%

\usepackage{mysty_mp}

\begin{document}

\title[Frank-Wolfe with Moreau Envelope Smoothing for Nonsmooth Nonconvex Problems]{Frank-Wolfe with Moreau Envelope Smoothing for Nonsmooth Nonconvex Problems}

\author*[1]{\fnm{Antonio} \sur{Silveti-Falls}}\email{tonys.falls@gmail.com}\equalcont{ALl authors contributed equally.}

\author[2]{\fnm{Cesare} \sur{Molinari}}\email{cecio.molinari@gmail.com}\equalcont{All authors contributed equally.}

\author[3]{\fnm{Zev} \sur{Woodstock}}\email{woodstzc@jmu.edu}\equalcont{All authors contributed equally.}

\affil*[1]{\orgdiv{CVN}, \orgname{CentraleSup\'{e}lec}, \orgaddress{\street{3 Rue Joliot Curie}, \city{Gif-sur-Yvette}, \postcode{91190}, \state{Essonne}, \country{France}}}

\affil[2]{\orgdiv{Department of Mathematics}, \orgname{Università di Genova}, \orgaddress{\street{Via Dodecaneso 35}, \city{Genova}, \postcode{16146}, \state{Liguria}, \country{Italy}}}

\affil[3]{\orgdiv{Department of Mathematics \& Statistics}, \orgname{James Madison University}, \orgaddress{\street{60 Bluestone Drive}, \city{Harrisonburg}, \postcode{22807}, \state{VA}, \country{USA}}}

\abstract{We present and analyze Frank-Wolfe with Moreau Envelope Smoothing (FRAMES) for solving nonsmooth nonconvex constrained optimization problems, taking advantage of iterative smoothing via the Moreau envelope followed by one Frank-Wolfe step per iteration. The problem template we consider encompasses splitting problems with multiple convex constraint sets as well as problems with nonsmooth weakly convex regularizers like the MCP or SCAD penalties. We prove convergence, with rates, for both of these cases under a variety of mild assumptions, including inconsistent problems. Additionally, we highlight a new relationship between the Frank-Wolfe gap for a problem with nonsmooth objective and the Frank-Wolfe gap for a smoothed surrogate problem, demonstrating suboptimality of prior analyses. Numerical experiments are performed for matrix factorization problems and nonconvex quadratic splitting over multiple convex constraint sets, where the improvements in analysis are empirically observed.}

\keywords{Frank-Wolfe, Moreau envelope smoothing, nonsmooth nonconvex optimization, inconsistent constraints, weakly convex regularization, Frank-Wolfe gap.}

\maketitle

\section{Introduction}\label{sec:intro}
The Frank-Wolfe algorithm \cite{frankwolfe} and its related extensions \cite{guelat1986some,argyriou2014hybrid,lacoste2015,combettes2020boosting} provide an attractive first-order scheme for solving minimization problems posed over a constraint set $\C$ without requiring projections onto it.
Instead of projecting onto $\C$ at each iteration, the Frank-Wolfe algorithm makes use of a \emph{linear minimization oracle} (LMO) to construct its update in a way that guarantees the next iterate remains feasible in $\C$.
This is useful for problems where projections (i.e., quadratic minimization oracles) are computationally expensive but linear minimization oracles are tractable, with recent work precisely detailing the differences in complexity between projection and LMO \cite{combettes2021complexity,woodstock2026high}.

Typically, Frank-Wolfe is applied to problems where the objective function is continuously differentiable, since at each iteration it requires computing a gradient to pass into the linear minimization oracle.
However, many modern problems fit the following, more general constrained composite format
\begin{equation}\tag{P}\label{P}
    \min\limits_{x\in\C\subset \R^n}f(x) + g(Tx),
\end{equation}
where $\C\neq\varnothing$ is the set over which the linear minimization oracle is accessible, $f$ is smooth but not assumed to be convex, $T$ is a linear map, and $g$ is some nonsmooth function whose proximal operator is tractable, typically representing a regularizer or an additional constraint set $\D$ whose projection is cheap to compute.
Examples fitting into \eqref{P} include inverse problems when $f$ models the data-fidelity of the recovered solution and $g$ is some regularizer, e.g., the MCP or SCAD penalties, or the indicator of an additional convex constraint set, like the nonnegative orthant.
Directly applying Frank-Wolfe to such problems is not simple; for instance, replacing the gradient with a subgradient and applying Frank-Wolfe does not necessarily lead to convergence, even on simple functions like $g(x)=\max(x_1,x_2)$ \cite{yurtsever2018conditional}.

There exist nonsmooth extensions of the Frank-Wolfe algorithm \cite{argyriou2014hybrid,yurtsever2018conditional,yurtsever2019conditional,silveti2020generalized,woodstock2025splitting} that use the Moreau envelope to smooth the function $g$.
The existing analyses in these cited works all assume that $g$ is convex. Furthermore, all of them except \cite{woodstock2025splitting} assume that $f$ is convex, thus excluding nonconvex instances of \eqref{P} like matrix factorization problems.
For these smooth nonconvex problems, first-order stationarity is typically measured using the Frank-Wolfe gap (an analog of the norm of the gradient for unconstrained problems), detailed in Section~\ref{sec:gaps}.
However, the compatibility of the Frank-Wolfe gap with the Moreau envelope is much less studied.
While \cite{woodstock2025splitting} established asymptotic subsequential convergence to a stationary point of \eqref{P} is possible when $g$ is the indicator function of a specific linear subspace, in the general setting of \eqref{P}, it is not obvious how the Frank-Wolfe gap associated to a surrogate smoothed problem will certify stationarity of the original problem. 

\paragraph{Approach} 
As in \cite{argyriou2014hybrid,yurtsever2018conditional,silveti2020generalized,woodstock2025splitting}, we replace the nonsmooth term $g$ by its Moreau envelope with a vanishing smoothing parameter. More precisely, at iteration $k$ we consider the smooth surrogate
\begin{equation*}
    \Phi_k(x) := f(x)+g^{\beta_k}(Tx),
\end{equation*}
where $\beta_k>0$ decreases to zero, and we apply one Frank-Wolfe step to $\Phi_k$ over the set $\C$. This construction preserves the projection-free nature of the method with respect to $\C$, since each update only requires a linear minimization oracle over $\C$, while the nonsmooth term is handled through the proximity operator of $g$. The use of a decreasing smoothing parameter is essential: for fixed $\beta$, the algorithm would only solve a smoothed approximation of \eqref{P}, whereas letting $\beta_k\to0$ forces the surrogate objectives to return to the original problem. At the same time, this creates a tradeoff, since smaller values of $\beta_k$ typically lead to worse smoothness constants for $\Phi_k$. The main goal of this paper is therefore to analyze this tradeoff and to determine when convergence of the Frank-Wolfe gaps associated with the smoothed objectives yields meaningful stationarity guarantees for the original nonsmooth problem \eqref{P} without assuming convexity.

\paragraph{Contributions} We present a Moreau-envelope Frank-Wolfe framework for the nonsmooth composite problem \eqref{P}, together with convergence guarantees that distinguish between progress on the smoothed surrogate problems and stationarity of the original nonsmooth problem. Our main contributions are as follows.

\begin{itemize}
    \item \textbf{Smoothed Frank-Wolfe gap convergence with vanishing smoothing.}
    We prove that, despite the objective $\Phi_k$ changing at every iteration, the average and best Frank-Wolfe gaps associated with the smoothed objectives admit explicit convergence rates. In particular,  for $p,q\in\left]0,1\right[$, step sizes $\gamma_k=(k+1)^{-p}$ and smoothing schedules $\beta_k=\beta_0(k+1)^{-q}$,
    \thmref{thm:rates} establishes an $\mathcal{O}(N^{-\min\{p-q,\ 1-p-q\}})$ bound on the average smoothed Frank-Wolfe gap, and hence on the best smoothed gap among the first $N$ iterates. This result extends the usual smooth nonconvex Frank-Wolfe gap analysis to a setting with a nonsmooth composite term, including both indicator and Lipschitz weakly convex cases, a linear composition, and a vanishing smoothing parameter. Remark~\ref{rmk:optimal_power_balance_nonsmooth} later explains that the choice $p=1/2$ and $q=1/4$ optimizes the final nonsmooth stationarity rate to $\mathcal{O}(N^{-1/4})$.
    \item {\bf Nonsmooth stationarity guarantee.} We also show in \thmref{thm:indicator_subsequential_stationarity} and \thmref{thm:lipschitz_subsequential_stationarity} that convergence of the smoothed Frank-Wolfe gaps along a subsequence guarantees cluster points of that subsequence are stationary points, which provides a generalization of \cite{woodstock2025splitting} beyond the setting when $g$ is the indicator function of a specific linear subspace. In general, one cannot expect convergence of the full sequence of iterates, since even in the smooth setting Frank-Wolfe iterates need not converge \cite{bolte2024iterates}. We also show in \thmref{thm:inconsistent_indicator_subsequential} that, if the underlying problem is inconsistent, the algorithm still produces a stationary point of a relaxed problem that minimizes the distance between the constraint sets.
    \item \textbf{Transfer from smoothed gaps to nonsmooth stationarity certificates.}
    We then analyze when small smoothed Frank-Wolfe gaps imply meaningful progress for the original nonsmooth problem \eqref{P}. For the indicator case $g=\iota_{\D}$, Lemma~\ref{lem:indicator_finite_time_transfer} characterizes relationships between the smoothed gap, the feasibility violation $\operatorname{dist}_{\D}(Tx_k)$, the smoothing parameter $\beta_k$, and the Frank-Wolfe gap over the true feasible set $\widetilde{\C}:=\C\cap T^{-1}(\D)$. For the Lipschitz weakly convex case, our gap-transfer result, \lemref{lem:lipschitz_gap_transfer}, bounds an appropriate nonsmooth Frank-Wolfe gap in terms of the smoothed gap and the smoothing parameter. These estimates lead to explicit rates on the nonsmooth Frank-Wolfe gap in Theorems~\ref{thm:indicator_nonsmooth_gap_rate} and \ref{thm:nonsmooth_gap_rate}, clarifying how the decay of $\beta_k$ affects stationarity guarantees for the original problem \eqref{P}.
    
    These bounds are also independent of our algorithm, and can be applied to any problem of the form \eqref{P} that uses Moreau smoothing. For instance, Remark~\ref{rmk:log_is_slower} demonstrates how the smoothed convergence rates of \cite{woodstock2025splitting,halbey2025efficient} translate to nonsmooth convergence rates, revealing their step size and smoothing schedules to be suboptimal. 
    
    \item \textbf{Numerical evidence and smoothing-parameter behavior.}
    We illustrate the theory on examples involving nonnegative matrix factorization, trend-filtered matrix factorization with nonconvex nonsmooth penalties, and a nonconvex splitting problem over intersecting constraint sets in \secref{sec:experiments}. These experiments demonstrate the practical effect of the smoothing schedule and the sensitivity to the initial smoothing parameter. The splitting experiment is designed in such a way that the nonsmooth Frank-Wolfe gap is tractable, and the difference between smoothed gap convergence and progress on the original nonsmooth stationarity measures is shown.
\end{itemize}

\paragraph{Outline} We finish this section with a discussion of related work before moving on to \secref{sec:setup_algorithm}, where we formally state the standing assumptions on \eqref{P} and present the main algorithm, \FRAMES. In \secref{sec:preliminary}, we collect the elementary estimates on Moreau envelopes and the smoothed objectives that are used throughout the convergence analysis. In \secref{sec:gaps}, we introduce the different Frank-Wolfe gaps and stationarity certificates that arise for the smoothed problem, the indicator case, and the Lipschitz weakly convex case. In \secref{sec:smoothed_convergence}, we prove convergence rates for the smoothed Frank-Wolfe gaps generated by the algorithm and provide guarantees for subsequential stationary points of \eqref{P}. In \secref{sec:gap_transfer}, we relate the convergence of these smoothed gaps to stationarity certificates for the original, nonsmooth problem \eqref{P}, including both finite-time gap-transfer results (yielding explicit convergence rates) and subsequential stationarity guarantees for inconsistent problems. Finally, in \secref{sec:experiments}, we illustrate the behavior of the method on several numerical examples and highlight the effect of the smoothing schedule on the certificate of stationarity for \eqref{P}.

\subsection{Related Work}
We separate the discussion of related work into categories based on three features: the use of a linear minimization oracle over $\C$, the presence of the nonsmooth term $g\circ T$, and the notion of stationarity used in the results.

\paragraph{Smooth Nonconvex Frank-Wolfe}
For smooth constrained problems, i.e., \eqref{P} with $g=0$, Frank-Wolfe has been studied in the nonconvex setting already, with the notable work of \cite{lacoste2015} establishing an $\mathcal{O}(N^{-1/2})$ convergence rate for the Frank-Wolfe gap using either a short step or a line search step size.

\paragraph{Moreau Envelope Smoothing and Convex Composite Frank-Wolfe}
Several works considering convex composite problems used the Moreau envelope (sometimes under the name homotopy-based smoothing) in conjunction with the linear minimization oracle over $\C$ as in Frank-Wolfe. In \cite{argyriou2014hybrid,yurtsever2018conditional}, the nonsmooth term in a composite objective is replaced by a Moreau envelope with a decreasing smoothing schedule, with one step of Frank-Wolfe applied to the smoothed surrogate problem at each iteration; \cite{argyriou2014hybrid} considered only Lipschitz nonsmooth terms while \cite{yurtsever2018conditional} considered both Lipschitz nonsmooth terms and indicator functions. Augmented-Lagrangian variants for handling affine constraints like $Ax=b$ separately from $\C$ were developed in \cite{yurtsever2019conditional,silveti2020generalized}, with inexact and stochastic versions appearing in \cite{silveti2021inexact}. Stochastic composite Frank-Wolfe methods were also studied in \cite{loca}.

Other approaches based on the Moreau envelope with linear minimization oracle over $\C$ for nonsmooth convex optimization include MOPES and MOLES \cite{thekumparampil2020projection,thekumparampil2020optimal}. These methods also use Moreau smoothing, but their goal is convex suboptimality or regret, and the LMO-based variant approximates projection-type operations through an inner Frank-Wolfe routine. They do not address nonconvex stationarity for the composite problem \eqref{P}.

All the algorithms studied in these papers are similar to \FRAMES\ in the sense that they combine a linear minimization oracle over $\C$ with the proximal operator of $g$. However, these analyses all require convexity and provide guarantees on functional gaps, primal-dual gaps, or regret. In contrast, we consider \eqref{P} with nonconvex $f$ and nonsmooth weakly convex $g$, for which the appropriate notion of first-order stationary point is better captured by the Frank-Wolfe gap.

\paragraph{Frank-Wolfe Splitting}
The closest nonconvex predecessor to our work is \cite{woodstock2025splitting}, which studies a Frank-Wolfe method for minimizing a smooth nonconvex function over a nonempty intersection of compact convex sets,
\begin{equation*}
    \min_{x\in\bigcap_{i=1}^m\C_i} f(x).
\end{equation*}
Their approach lifts the problem to the product space $\C_1\times\cdots\times\C_m$ and enforces consensus by taking $g$ to be the indicator of the diagonal subspace (\`a la Pierra \cite{Pier84}), yielding a special case of \eqref{P}. By smoothing this indicator and applying Frank-Wolfe to the resulting surrogate problems, \cite{woodstock2025splitting} proves a convergence rate for averaged smoothed Frank-Wolfe gaps in the nonconvex setting. A follow-up work \cite{halbey2025efficient} improves the corresponding smoothed-gap rate through a modified step-size choice. These splitting methods are highly relevant to our work, but they treat a specific consensus-constraint structure. Our analysis keeps the same Moreau-smoothing philosophy while allowing more general nonlinear constraints and Lipschitz weakly convex nonsmooth terms, and we explicitly relate the smoothed Frank-Wolfe gaps to stationarity certificates for the original nonsmooth problem.

\paragraph{Variable Smoothing and Weak Convexity}
Variable smoothing methods based on the Moreau envelope have also been studied outside the Frank-Wolfe setting. Most relevant for us is \cite{bohm2021variable}, which considers unconstrained composite problems involving a weakly convex nonsmooth term and a decreasing Moreau smoothing parameter. The estimates used there are similar in spirit to several of the Moreau-envelope estimates used in our analysis. The main difference is algorithmic and geometric: \cite{bohm2021variable} does not work in a projection-free Frank-Wolfe setting, nor does it analyze stationarity through Frank-Wolfe gaps. In contrast, our setting \eqref{P} allows for constrained problems and our method preserves the linear-minimization-oracle geometry over $\C$, uses only the proximal operator of $g$, and studies how stationarity of the smoothed surrogate transfers back to stationarity certificates for the original nonsmooth composite problem \eqref{P}.

There is also a broader literature on projection-free methods for convex or weakly smooth composite problems, including conditional-gradient methods under H\"older or weak smoothness assumptions \cite{lan2016conditional,ouyang2023universal,ito2023parameter}. The convergence guarantees in these works require convexity and are not designed to address nonsmooth nonconvex stationarity for the composite problem \eqref{P}.

\paragraph{Nonsmooth Frank-Wolfe Methods Without Smoothing}
A complementary line of work develops Frank-Wolfe-type methods for nonsmooth problems without using Moreau smoothing. \cite{de2023short} studies a Frank-Wolfe method for upper-$C^{1,\alpha}$ functions over compact convex sets and proves rates toward Clarke stationarity. Although this setting is close to that of \eqref{P}, the distinction is that \cite{de2023short} relies on an upper-$C^{1,\alpha}$ structure and a suitable generalized gradient selection, whereas our approach exploits the composite form $f+g\circ T$, prox access to $g$, and Moreau smoothing of the nonsmooth outer term.

Another recent direction is Frank-Wolfe for abs-smooth functions \cite{kreimeier2024frank}, where the algorithm uses an abs-linearization of the objective and a generalized Frank-Wolfe gap. This provides an alternative to smoothing and can recover smooth-like rates for the corresponding generalized gap. However, the method relies on the specialized abs-smooth structure and requires solving a more involved subproblem rather than a standard linear minimization oracle over $\C$. Thus, it is complementary to the present approach, which keeps the usual Frank-Wolfe linear oracle and instead handles nonsmoothness through the Moreau envelope of $g$.

In the convex nonsmooth setting, several projection-free approaches also avoid Moreau smoothing. Deterministic nonsmooth Frank-Wolfe methods with coreset guarantees were studied in \cite{ravi2019deterministic}, while methods based on uniform affine approximations for separable nonsmooth convex functions were developed in \cite{cheung2018solving}. More recently, \cite{asgari2024nonsmooth} studied nonsmooth projection-free convex optimization with functional constraints using subgradients and a separation-type scheme. These works show that nonsmooth projection-free optimization can be approached without smoothing, but their guarantees are for convex problems or rely on specialized structure that is different from the general nonconvex composite model \eqref{P}.

Finally, \cite{mazanti2025nonsmooth} proposes a nonsmooth Frank-Wolfe algorithm through a dual cutting-plane perspective. Their approach is quite different from the Moreau-envelope methods discussed above: it reinterprets the fully corrective Frank-Wolfe algorithm as dual to a cutting-plane method with partial linearization, and then uses this dual viewpoint to develop a nonsmooth Frank-Wolfe-type scheme. This provides an important recent no-smoothing alternative, although it requires convexity and is algorithmically distinct from the single-loop Moreau-smoothed Frank-Wolfe method studied here.

\section{Problem Setup and Algorithm}\label{sec:setup_algorithm}

\subsection{Standing Assumptions}

We work under the following assumptions on \eqref{P} throughout the paper.

\begin{assumption}[Smooth term]\label{ass:f}
The function $f:\R^n\to\R$ is continuously differentiable on an open set containing $\C$, and its gradient is Lipschitz-continuous on $\C$ with constant $\gradfconstant>0$.
\end{assumption}

\begin{assumption}[Linear map]\label{ass:T}
The operator $T:\R^n\to\R^m$ is linear.
\end{assumption}

\begin{assumption}[Frank-Wolfe feasible set]\label{ass:C}
The nonempty set $\C\subset\R^n$ is compact and convex, with diameter $\dc:=\sup_{x,y\in\C}\|x-y\|_2<+\infty$.
\end{assumption}

\begin{assumption}[Nonsmooth term]\label{ass:g}
The function $g:\R^m\to\left]-\infty,+\infty\right]$ satisfies one of the following:
\begin{enumerate}[label=(\Roman*)]
    \item \label{ass:g=indicator}
    $g=\iota_{\D}$ for some nonempty closed convex set $\D\subset\R^m$.
    \begin{enumerate}[label=(\alph*)]
        \item \label{ass:g=indicator_nonempty}(optional) Additionally, it holds $\D\cap T(\C)\neq \varnothing$.
        \item \label{ass:g=indicator_nonempty_ri}(optional) Additionally, it holds $\operatorname{ri}(\D)\cap \operatorname{ri}(T(\C))\neq\varnothing$.
    \end{enumerate}
    \item \label{ass:g=lipschitz}
    $g\colon\R^m\to\R$ is $L_g$-Lipschitz-continuous on $\R^m$ and $\rho$-weakly convex for some $\rho\geq0$, i.e., $g+\frac{\rho}{2}\|\cdot\|_2^2$ is convex.
\end{enumerate}
\end{assumption}
 Without loss of generality, Assumption~\ref{ass:g} provides that $g$ is $\rho$-weakly convex; under \assref{ass:g}\ref{ass:g=indicator} we use the convention $\rho=0$ with $\rho^{-1}=+\infty$. We assume access to three computational primitives: the gradient $\nabla f$, the linear minimization oracle (LMO), given some vector $v$, returns a point in $\arg\min_{s\in\C}\langle v,s\rangle$, and the proximal operator of $g$, $\prox_{\beta g}(y)
    :=
    \argmin_{u\in\R^m}
    \left\{
        g(u)+\frac{1}{2\beta}\norm{u-y}{2}^2
    \right\}.$
Under Assumption~\ref{ass:g}\ref{ass:g=indicator}, for every $\beta>0$, $\prox_{\beta g}=\proj_{\D}$. Open-source repositories for proximal operators and LMOs can be found, e.g., at \cite{FW.jl,prox-op.net}.

\subsection{Notation}
\label{sec:notation}

For general background see, e.g., \cite{rockafellar1998variational}.
We denote $\N:=\{0,1,2,\ldots\}$ and $\N^*:=\N\setminus\{0\}$. For a proper function $g:\R^m\to\left]-\infty,+\infty\right]$, its \emph{domain} is $\dom(g):=\{x\in\R^m:g(x)<+\infty\}$.
The \emph{indicator function} of a set $S$ is $\iota_S(x):=
        0 \;\;\text{if }x\in S; \text{and}
        +\infty \;\;\text{if }x\notin S.$
For a closed set $S$, $\dist_{S}(y):=\inf_{u\in S}\norm{y-u}{2}$ and $\dist_{S}^2(y):=\para{\dist_{S}(y)}^2$ for the \emph{distance} (or its square) to $S$. The \emph{projection} onto a nonempty closed convex set $\D$ is denoted by $\proj_{\D}$. Since $\C$ is compact and $T$ is linear, $T(\C)$ is compact, hence the \emph{diameter}  $\dtc:=\sup_{x,y\in\C}\norm{Tx-Ty}{2}$ is finite.
Under Assumption~\ref{ass:g}\ref{ass:g=indicator}, the \emph{one-sided excess} of $T(\C)$ from $\D$,
$
    e_{\D,T(\C)}
    :=
    \max_{z\in T(\C)}\dist_{\D}(z),
$
is finite since $T(\C)$ is compact by \assref{ass:T} and \assref{ass:C}. Moreover, under the additional hypotheses in  either \assref{ass:g}\ref{ass:g=indicator}\ref{ass:g=indicator_nonempty} or \assref{ass:g}\ref{ass:g=indicator}\ref{ass:g=indicator_nonempty_ri}, the intersection $\D\cap T(\C)$ is nonempty and therefore
$
    e_{\D,T(\C)}\leq \dtc.
$
Finally, since $f$ is continuously differentiable on an open set containing the compact set $\C$, the quantity
\begin{equation*}
    \fconstant:=\sup_{x\in\C}\norm{\nabla f(x)}{2}
\end{equation*}
is finite. Hence, for all $x,y\in\C$,
$
    f(x)-f(y)\leq \fconstant\norm{x-y}{2}\leq \fconstant\dc.
$
When $g$ is Lipschitz-continuous, we will denote the \emph{Clarke subdifferential} of $g$ at $x$ by $\partial g(x)$. When $g$ is convex and Lipschitz-continuous, this coincides with the classical convex subdifferential. When $g=\iota_{S}$ is the indicator function for a closed convex set $S$ and $x\in S$, $\partial g$ will denote the \emph{normal cone}, defined as
$
    N_S(x):=
    \left\{v:\ip{v,s-x}{}\leq0\quad\forall s\in S\right\},
$
with $N_S=\varnothing$ if $x\not\in S$. Here and throughout, $\operatorname{ri}$ denotes \emph{relative interior} and $T(\C):=\{Tx\colon x\in\C\}$.
For $\beta>0$, the Moreau envelope of $g$ is 
\begin{equation}
\label{e:moreau}
    g^\beta(y)
    :=
    \min\limits_{u\in\R^m}
        g(u)+\frac{1}{2\beta}\norm{u-y}{2}^2.
\end{equation}
This work only considers the choice $\beta<\rho^{-1}$ which guarantees $\prox_{\beta g}$ is unique. With this choice of $\beta$, under \assref{ass:g}, $g^{\beta}$ is also continuously differentiable \cite[Corollary~3.4]{hoheisel2010proximal}, and $
    \nabla g^\beta(y)
    =
    \frac{1}{\beta}
    \left(
        y-\operatorname{prox}_{\beta g}(y)
    \right)$.

\subsection{Algorithm}

At iteration $k$, we smooth $g$ using the Moreau envelope to get the following smoothed objective and gradient
\begin{equation}
\label{e:phi}
    \Phi_k(x)
    :=
    f(x)+g^{\beta_k}(Tx)\;\text{with}\;\;
    \nabla\Phi_k(x)
    =
    \nabla f(x)
    +
    \frac{1}{\beta_k}
    T^*\left(
        Tx-\operatorname{prox}_{\beta_k g}(Tx)
    \right).
\end{equation}
The \FRAMES\ algorithm applies one Frank-Wolfe step to the smoothed objective $\Phi_k$ (only smoothing $g$ rather than $g\circ T$) over $\C$ at each iteration. Since $\C$ is convex and the step size $\gamma_k\in\left]0,1\right]$, the update in Step~\ref{a:update} guarantees $x_k\in\C$ for all $k$ whenever $x_0\in\C$. However, under Assumption~\ref{ass:g}\ref{ass:g=indicator}, the same is not guaranteed for the constraint $T^{-1}(\D)$; the iterates may not satisfy $Tx_k\in\D$ for any finite $k$.

\begingroup
\renewcommand{\figurename}{Algorithm}
\begin{figure}[ht]
\label{algorithm}
\centering
\caption{\textbf{Fra}nk-Wolfe with \textbf{M}oreau \textbf{E}nvelope
\textbf{S}moothing (\textsc{\textbf{Frames}})}
\fbox{\parbox{0.95\textwidth}{%
\medskip
\textbf{Input:} $x_0\in\C$, step sizes $\seq{\gamma_k}\subset {]0,1]}$,
and smoothing schedule $\seq{\beta_k}\subset {]0,\rho^{-1}[}$
\medskip
\textbf{For} $k=0,1,2,\ldots$ \textbf{do}
\begin{enumerate}[leftmargin=2em]
    \item Compute the gradient of the smoothed objective
    \begin{equation*}
        \nabla \Phi_k(x_k)
        =
        \nabla f(x_k)
        +
        \frac{1}{\beta_k}
        T^*\!\left(
            Tx_k-\operatorname{prox}_{\beta_k g}(Tx_k)
        \right).
    \end{equation*}
    \item 
    \label{a:lmo} 
    Compute the LMO for this direction
    \begin{equation*}
        s_k\in\arg\min_{s\in\C}
        \langle \nabla\Phi_k(x_k),s\rangle.
    \end{equation*}
    \item 
    \label{a:update} 
    Update the iterate
    \begin{equation*}
        x_{k+1}=x_k+\gamma_k(s_k-x_k).
    \end{equation*}
\end{enumerate}
\textbf{End for}
}}
\end{figure}
\endgroup

\begin{remark}
Even if $\prox_{\beta g}$ is available for $\beta>0$, unless $T$ has special structure (like orthogonality), evaluating $\prox_{g\circ T}$ is typically not possible in closed-form. Hence, by smoothing only the outer function $g$ instead of the composition $g\circ T$, \FRAMES\ only needs access to $\prox_{\beta g}$ for $\beta\in\left]0,\rho^{-1}\right[$ to compute $\nabla \Phi_k(\xk)$ at each iteration.
\end{remark}

The main results in this paper will be stated for the open-loop schedules $\gamma_k=(k+1)^{-1/2}$ and $\beta_k=\beta_0(k+1)^{-1/4}$, where $\beta_0\in\left]0,\rho^{-1}\right[$ in the Lipschitz weakly convex case, with the convention $\rho^{-1}=+\infty$ when $\rho=0$. Both algorithms appearing in \cite{woodstock2025splitting, halbey2025efficient} arise as special cases of \FRAMES\ under the special case of \assref{ass:g}\ref{ass:g=indicator} when $\D$ is a specific linear subspace. These suggested schedules of $\gamma_k=\mathcal{O}(k^{-1/2})$ and $\beta_k=\mathcal{O}(1/\log(k))$ will still yield convergence but our analysis shows that this will lead to inferior performance compared to the main schedules proposed in this work. Some intermediate results are stated under the following, more general assumptions.

\begin{assumption}\label{ass:gamma}
    The step size sequence $\seq{\gamma_k} \subset\, ]0,1]$ is nonincreasing and $\underset{k\to\infty}{\lim}\gamma_k=0$.
\end{assumption}

\begin{assumption}\label{ass:beta}
    The smoothing parameter sequence $\seq{\beta_k} \subset ]0,\rho^{-1}[$ is nonincreasing and $\underset{k\to\infty}{\lim}\beta_k=0$.
\end{assumption}


\section{Preliminaries and Basic Estimates}\label{sec:preliminary}

The main purpose of this section is to record how the Moreau envelope behaves as the smoothing parameter varies, and to derive uniform bounds on $\Phi_k(x)=f(x)+g^{\beta_k}(Tx)$.

\subsection{Moreau Envelope Estimates}

\begin{proposition}[Moreau envelope calculus]\label{prop:moreauProperties}
Let $g$ satisfy \assref{ass:g}, and let $\beta\in\left]0,\rho^{-1}\right[$. Then $g^\beta$ is continuously differentiable on $\R^m$ and
$(    \forall y\in\R^m)\qquad
    \nabla g^\beta(y)
    =
    \frac{1}{\beta}\para{y-\prox_{\beta g}(y)}.
$
Moreover, $\nabla g^\beta$ is Lipschitz-continuous with constant
$
    L_{g,\beta}
    :=
    \max\left\{\beta^{-1},\frac{\rho}{1-\rho\beta}\right\}\leq \frac{1}{\beta(1-\rho\beta)}.
$
In particular, for every $y\in\R^m$, under \assref{ass:g}\ref{ass:g=indicator},
$
    g^\beta(y)=\frac{1}{2\beta}\dist_{\D}^2(y)$ and 
  $
    \nabla g^\beta(y)=\frac{1}{\beta}\para{y-\proj_{\D}(y)};$
under \assref{ass:g}\ref{ass:g=lipschitz},
$
    \norm{\nabla g^\beta(y)}{2}\leq L_g.
$
\end{proposition}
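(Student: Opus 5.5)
We reduce everything to the convex function $h:=g+\frac{\rho}{2}\|\cdot\|_2^2$, which is proper, closed and convex under \assref{ass:g}. Under \ref{ass:g=indicator} we have $h=\iota_{\D}$ with $\rho=0$; under \ref{ass:g=lipschitz}, $h$ is finite and continuous. For $\beta<\rho^{-1}$ the prox objective $u\mapsto g(u)+\frac{1}{2\beta}\|u-y\|_2^2$ equals $h(u)+\frac{1-\rho\beta}{2\beta}\|u\|_2^2-\frac1\beta\langle u,y\rangle+\frac{1}{2\beta}\|y\|_2^2$. This is strongly convex with modulus $\frac{1-\rho\beta}{\beta}>0$, so $\prox_{\beta g}$ is single-valued. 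Completing the square gives
\[
\prox_{\beta g}(y)=\prox_{\mu h}\!\left(\tfrac{y}{1-\rho\beta}\right),\qquad \mu:=\tfrac{\beta}{1-\rho\beta}.
\]
Since $\prox_{\mu h}$ is firmly nonexpansive, $\prox_{\beta g}$ is $\frac{1}{1-\rho\beta}$-Lipschitz and monotone.

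\textbf{Differentiability and gradient formula.} We use the standard identity $g^\beta(y)=\frac{1}{2\beta}\|y\|_2^2-\sup_u\{\langle u,y\rangle/\beta-g(u)-\frac{1}{2\beta}\|u\|_2^2\}$. The supremum is a convex function of $y$, namely a conjugate of a strongly convex function, rescaled. Its maximizer is unique and equals $\prox_{\beta g}(y)$. By Danskin's theorem (or by differentiability of conjugates of strongly convex functions), it is differentiable with gradient $\prox_{\beta g}(y)/\beta$. Hence $\nabla g^\beta(y)=\frac1\beta(y-\prox_{\beta g}(y))$, and this gradient is continuous by the Lipschitz continuity of the prox just shown. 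Alternatively, one may simply cite \cite[Corollary~3.4]{hoheisel2010proximal}, as the paper already does.

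\textbf{Lipschitz constant.} This is the delicate step, because a naive triangle inequality only gives $\frac1\beta(1+\frac{1}{1-\rho\beta})$. Write $P:=\prox_{\beta g}$, which is the gradient of a convex function $\psi$ with $\psi$ $\frac{1}{1-\rho\beta}$-smooth. We will use that, where it exists, $\nabla P=\nabla^2\psi$ is symmetric with spectrum in $[0,\frac{1}{1-\rho\beta}]$. One gets this via Rademacher, or by working with the cocoercivity inequality directly. Then $\nabla(\mathrm{Id}-P)/\beta$ has spectrum in $[\frac1\beta(1-\frac1{1-\rho\beta}),\frac1\beta]=[-\frac{\rho}{1-\rho\beta},\frac1\beta]$. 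Its operator norm is therefore at most $\max\{\beta^{-1},\frac{\rho}{1-\rho\beta}\}$. To avoid Rademacher, we can instead expand $\|(y-y')-(Py-Py')\|^2$ and use the inequality $\langle Py-Py',y-y'\rangle\ge(1-\rho\beta)\|Py-Py'\|^2$, which follows from firm nonexpansiveness of $\prox_{\mu h}$. This yields a quadratic in $t=\|Py-Py'\|$ bounded by its endpoint values, giving the same max. Finally, $\max\{\beta^{-1},\frac{\rho}{1-\rho\beta}\}\le\frac{1}{\beta(1-\rho\beta)}$ because both $1-\rho\beta\le1$ and $\rho\beta\le1$.

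\textbf{Special cases.} Under \ref{ass:g=indicator} we have $\rho=0$ and $\prox_{\beta g}=\proj_{\D}$, so $g^\beta=\frac{1}{2\beta}\dist_\D^2$ and the gradient formula specializes directly. Under \ref{ass:g=lipschitz}, set $p=\prox_{\beta g}(y)$. Optimality gives $0\in\partial g(p)+\frac1\beta(p-y)$ in the Clarke sense, which is valid since the quadratic is smooth. Hence $\nabla g^\beta(y)=\frac1\beta(y-p)\in\partial g(p)$, and Clarke subgradients of an $L_g$-Lipschitz function have norm at most $L_g$. The main obstacle is the sharp Lipschitz constant; everything else is routine.
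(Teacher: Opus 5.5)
Your proof is correct, but it is more self-contained than the paper's. The paper gets differentiability, the gradient formula, and the Lipschitz constant $\max\{\beta^{-1},\rho/(1-\rho\beta)\}$ in one step by citing \cite[Corollary~3.4]{hoheisel2010proximal}. It argues only the two special cases directly, and it does so exactly as you do: via $\prox_{\beta\iota_{\D}}=\proj_{\D}$, and via the Clarke optimality condition $\nabla g^\beta(y)\in\partial g(\prox_{\beta g}(y))$ together with the bound $L_g$ on Clarke subgradients.

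You instead rebuild the cited result from convex analysis:
\begin{itemize}
\item The rescaling identity $\prox_{\beta g}(y)=\prox_{\mu h}(y/(1-\rho\beta))$, with $\mu=\beta/(1-\rho\beta)$, gives single-valuedness and the cocoercivity $\langle Py-Py',y-y'\rangle\geq(1-\rho\beta)\|Py-Py'\|^2$.
\item The conjugate representation gives the gradient formula.
\item Expanding $\|(y-y')-(Py-Py')\|^2\leq\|y-y'\|^2+(2\rho\beta-1)t^2$ over $t\in[0,\|y-y'\|/(1-\rho\beta)]$ gives the sharp constant. The two endpoints correspond to the regimes $\rho\beta\leq 1/2$ and $\rho\beta>1/2$.
\end{itemize}
This shows clearly where the $\max$ in $L_{g,\beta}$ comes from and removes the dependence on an external reference, at the cost of length.

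Two small remarks. First, Danskin's theorem in its usual form needs a compact index set, so the differentiability of the conjugate of a strongly convex function is the justification to rely on. Second, the Rademacher variant would need extra work: symmetry of the a.e.\ Jacobian, and passing from a.e.\ Jacobian bounds to a global Lipschitz bound. The elementary cocoercivity computation is the better choice to keep.
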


\begin{proof}
The differentiability of $g^\beta$, the gradient formula, and the Lipschitz estimate for $\nabla g^\beta$ follow from \cite[Corollary 3.4]{hoheisel2010proximal} and the choice $\beta\in\left]0,\rho^{-1}\right[$. The formula under \ref{ass:g=indicator} follows from the identity $\prox_{\beta\iota_{\D}}=\proj_{\D}$. Finally, under \ref{ass:g=lipschitz}, the optimality condition for $\prox_{\beta g}$ yields
\begin{equation*}
    \nabla g^\beta(y)
    =
    \frac{1}{\beta}\para{y-\prox_{\beta g}(y)}
    \in
    \partial g(\prox_{\beta g}(y)),
\end{equation*}
and the Clarke subgradients of an $L_g$-Lipschitz function have norm at most $L_g$ \cite{clarke1990optimization}.
\end{proof}

\begin{proposition}[Lipschitz displacement bound]\label{prop:moreauDisplacementBound}
Suppose $g$ satisfies \assref{ass:g}\ref{ass:g=lipschitz}. Then, for every $\beta\in\left]0,\rho^{-1}\right[$ and every $y\in\R^m$,
\begin{equation*}
    \norm{y-\prox_{\beta g}(y)}{2}
    \leq
    \beta L_g.
\end{equation*}
Moreover, if $\seq{\beta_k}$ satisfies \assref{ass:beta}, then for every sequence  $\seq{x_k}\subset\R^n$,
$
    \norm{Tx_k-\prox_{\beta_k g}(Tx_k)}{2}
    \leq
    \beta_k L_g
    \longrightarrow 0.
$
\end{proposition}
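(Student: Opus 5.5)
The plan is to reduce the displacement bound to the gradient bound already recorded in Proposition~\ref{prop:moreauProperties}. Fix $\beta\in\left]0,\rho^{-1}\right[$ and $y\in\R^m$. Under \assref{ass:g}\ref{ass:g=lipschitz}, Proposition~\ref{prop:moreauProperties} gives the identity $\nabla g^\beta(y)=\frac{1}{\beta}\para{y-\prox_{\beta g}(y)}$ together with $\norm{\nabla g^\beta(y)}{2}\leq L_g$. Multiplying the norm bound by $\beta>0$ then yields
\begin{equation*}
    \norm{y-\prox_{\beta g}(y)}{2}=\beta\norm{\nabla g^\beta(y)}{2}\leq \beta L_g .
\end{equation*}

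For a self-contained justification, one can argue directly from the optimality condition of the prox. Writing $p=\prox_{\beta g}(y)$, which is unique because $\beta<\rho^{-1}$ makes the prox objective strongly convex, Fermat's rule for Clarke subdifferentials of the sum of the Lipschitz function $g$ and the smooth quadratic gives $\frac{1}{\beta}(y-p)\in\partial g(p)$. Clarke subgradients of an $L_g$-Lipschitz function have norm at most $L_g$, which recovers the same bound. An alternative route uses only elementary inequalities: comparing the prox objective at $p$ and at $y$ gives $\frac{1}{2\beta}\norm{p-y}{2}^2\leq g(y)-g(p)\leq L_g\norm{p-y}{2}$. This only yields the weaker constant $2\beta L_g$, so I would rely on the subgradient argument to get the sharp constant.

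For the sequential statement, apply the first part with $y=Tx_k$ and $\beta=\beta_k$. This is legitimate since \assref{ass:beta} ensures $\beta_k\in\left]0,\rho^{-1}\right[$, and it gives $\norm{Tx_k-\prox_{\beta_k g}(Tx_k)}{2}\leq \beta_k L_g$ for every $k$. Since $\beta_k\to0$ by \assref{ass:beta}, the right-hand side tends to zero, independently of the sequence $\seq{x_k}$. The argument has no real obstacle. The only point needing care is that the subgradient inclusion relies on Clarke calculus for a nonconvex $g$, and that step is exactly what Proposition~\ref{prop:moreauProperties} already provides.
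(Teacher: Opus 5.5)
Your proposal is correct and matches the paper's proof: it multiplies the bound $\norm{\nabla g^\beta(y)}{2}\leq L_g$ from Proposition~\ref{prop:moreauProperties} by $\beta$, then takes $y=Tx_k$ and $\beta=\beta_k$ and uses $\beta_k\to0$. Your side remark is also right: comparing prox objectives only gives $2\beta L_g$, so the subgradient bound is what yields the sharp constant.
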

\begin{proof}
Using the expression and bounds for $\nabla g^{\beta}$ given in \propref{prop:moreauProperties},
\begin{equation}
\label{e:32}
    \norm{y-\prox_{\beta g}(y)}{2}
    =
    \beta\norm{\nabla g^\beta(y)}{2}
    \leq
    \beta L_g.
\end{equation}
The second claim follows by taking $y=Tx_k$ in \eqref{e:32} and using $\beta_k\to0$ from \assref{ass:beta}.
\end{proof}

\begin{proposition}[Bound on $\Id-\prox_{\beta g}$ over $T(\C)$]\label{prop:idProxBound}
Let Assumptions~\ref{ass:T}, \ref{ass:C}, and \ref{ass:g} hold. For $\beta\in\left]0,\rho^{-1}\right[$, 
$
    \max_{z\in T(\C)}\norm{z-\prox_{\beta g}(z)}{2}<+\infty.    
$
Moreover, if $\seq{\beta_k}$ satisfies \assref{ass:beta}, then for all $k\in\N^*$ it holds
\begin{equation}
\label{e:bprox}
    \max_{z\in T(\C)}\norm{z-\prox_{\beta_k g}(z)}{2}
    \leq B_{\prox} :=
    \begin{cases}
        e_{\D,T(\C)}
        & \mbox{if $g$ satisfies \assref{ass:g}\ref{ass:g=indicator},}\\
        \beta_0L_g
        & \mbox{if $g$ satisfies \assref{ass:g}\ref{ass:g=lipschitz}.}
    \end{cases}
\end{equation}
\end{proposition}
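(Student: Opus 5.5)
I will treat the two cases of \assref{ass:g} separately, first establishing the finiteness claim for a fixed $\beta\in\left]0,\rho^{-1}\right[$ and then the uniform bound \eqref{e:bprox} along the sequence $\seq{\beta_k}$.

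\emph{Indicator case, \assref{ass:g}\ref{ass:g=indicator}.} Here $\prox_{\beta g}=\proj_{\D}$ for every $\beta>0$. Hence for each $z\in\R^m$ the quantity $\norm{z-\prox_{\beta g}(z)}{2}$ equals $\norm{z-\proj_{\D}(z)}{2}$, which is $\dist_{\D}(z)$. The function $\dist_{\D}$ is $1$-Lipschitz, hence continuous, and $T(\C)$ is compact by \assref{ass:T} and \assref{ass:C}. So the maximum over $T(\C)$ is attained and equals $e_{\D,T(\C)}<+\infty$. This value does not depend on $\beta$, so the bound $B_{\prox}=e_{\D,T(\C)}$ holds for every $\beta_k$, and in particular for all $k\in\N^*$.

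\emph{Lipschitz weakly convex case, \assref{ass:g}\ref{ass:g=lipschitz}.} By \propref{prop:moreauDisplacementBound}, $\norm{z-\prox_{\beta g}(z)}{2}\le \beta L_g$ for all $z\in\R^m$, so the supremum over $T(\C)$ is finite. It is attained as a maximum because $\prox_{\beta g}$ is single-valued and continuous for $\beta<\rho^{-1}$. This continuity follows from $\Id-\prox_{\beta g}=\beta\nabla g^\beta$, which is Lipschitz by \propref{prop:moreauProperties}. Applying the same bound with $\beta=\beta_k$ gives a bound of $\beta_kL_g$. Since $\seq{\beta_k}$ is nonincreasing by \assref{ass:beta}, we have $\beta_k\le\beta_0$, and therefore the bound is at most $\beta_0L_g$ for all $k$.

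There is no real obstacle. The only step needing care is justifying that the maximum is attained rather than merely a supremum. This rests on continuity of $z\mapsto z-\prox_{\beta g}(z)$: in the indicator case it is the continuity of $\dist_{\D}$, and in the Lipschitz case it is the Lipschitz continuity of $\nabla g^\beta$ from \propref{prop:moreauProperties}. Either way, compactness of $T(\C)$ then gives attainment.
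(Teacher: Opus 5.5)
Your proof is correct and follows essentially the same route as the paper. Both use compactness of $T(\C)$ plus continuity of $\prox_{\beta g}=\Id-\beta\nabla g^\beta$ for attainment, $\prox_{\beta g}=\proj_{\D}$ to get $e_{\D,T(\C)}$ in the indicator case, and \propref{prop:moreauDisplacementBound} with $\beta_k\leq\beta_0$ in the Lipschitz case; the only cosmetic difference is that the paper proves finiteness once for both cases via that continuity identity, while you split it by case (using $1$-Lipschitzness of $\dist_{\D}$ for the indicator case), which is equally valid.
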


\begin{proof}
Since $T(\C)$ is compact and $\prox_{\beta g}$ is continuous for $\beta\in\left]0,\rho^{-1}\right[$ since $\prox_{\beta g}=\Id - \beta\nabla g^\beta$, the map
$    z\mapsto \norm{z-\prox_{\beta g}(z)}{2}$
attains its maximum on $T(\C)$. This proves finiteness.
Under Assumption~\ref{ass:g}\ref{ass:g=indicator}, for every $\beta>0$, $\prox_{\beta g}=\proj_{\D}$ and
$
    \max_{z\in T(\C)}
    \norm{z-\proj_{\D}(z)}{2}
    =
    e_{\D,T(\C)}.
$
Under \assref{ass:g}\ref{ass:g=lipschitz},  \propref{prop:moreauDisplacementBound} and Assumption~\ref{ass:beta} provide
   $ \max_{z\in T(\C)}\norm{z-\prox_{\beta g}(z)}{2}
    \leq
    \beta_k L_g
    \leq
    \beta_0 L_g$
because $\seq{\beta_k}$ is nonincreasing.
\end{proof}

\subsection{Estimates for the Smoothed Objectives}

\begin{proposition}[Descent lemma applied to $\Phi_k$]\label{prop:phiDescent}
Let Assumptions~\ref{ass:f}, \ref{ass:T}, \ref{ass:C}, and \ref{ass:g} hold. For every $\beta\in\left]0,\rho^{-1}\right[$, define
$
    \Phi_\beta(x):=f(x)+g^\beta(Tx)
$ and set 
\begin{equation}\label{eq:Lbeta}
    L_\beta
    :=
    \gradfconstant
    +
    \norm{T}{\op}^2
    \max\left\{\beta^{-1},\frac{\rho}{1-\rho\beta}\right\}.
\end{equation}
Then, for all $x,y\in\C$,
\begin{equation*}
    \Phi_\beta(y)
    \leq
    \Phi_\beta(x)
    +
    \ip{\nabla f(x)+T^*\nabla g^\beta(Tx),y-x}{}
    +
    \frac{L_\beta}{2}\norm{y-x}{2}^2.
\end{equation*}
\end{proposition}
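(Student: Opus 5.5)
The plan is to show that $\Phi_\beta$ has an $L_\beta$-Lipschitz gradient on $\C$ and then apply the standard descent lemma along the segment $[x,y]\subset\C$. Convexity of $\C$ keeps the segment inside $\C$, where Assumption~\ref{ass:f} gives Lipschitz continuity of $\nabla f$.

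\textbf{Step 1 (gradient and Lipschitz constant).} By the chain rule and \propref{prop:moreauProperties},
\[
\nabla\Phi_\beta(x)=\nabla f(x)+T^*\nabla g^\beta(Tx).
\]
For $x,z\in\C$ we estimate each piece separately. Assumption~\ref{ass:f} gives $\norm{\nabla f(x)-\nabla f(z)}{2}\le \gradfconstant\norm{x-z}{2}$. For the composite term we use \propref{prop:moreauProperties} with constant $L_{g,\beta}=\max\{\beta^{-1},\rho/(1-\rho\beta)\}$:
\[
\norm{T^*\nabla g^\beta(Tx)-T^*\nabla g^\beta(Tz)}{2}\le \norm{T}{\op}\,L_{g,\beta}\,\norm{Tx-Tz}{2}\le \norm{T}{\op}^2 L_{g,\beta}\norm{x-z}{2}.
\]
Adding the two bounds, $\nabla\Phi_\beta$ is $L_\beta$-Lipschitz on $\C$, with $L_\beta$ as in \eqref{eq:Lbeta}.

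\textbf{Step 2 (integral form of the descent inequality).} Set $x_t:=x+t(y-x)\in\C$ for $t\in[0,1]$. Since $\Phi_\beta$ is $C^1$ on an open set containing $\C$, the fundamental theorem of calculus gives
\[
\Phi_\beta(y)-\Phi_\beta(x)-\ip{\nabla\Phi_\beta(x),y-x}{}=\int_0^1\ip{\nabla\Phi_\beta(x_t)-\nabla\Phi_\beta(x),y-x}{}\,dt.
\]
By Cauchy--Schwarz and Step 1, the integrand is at most $L_\beta t\norm{y-x}{2}^2$. Integrating yields the bound $\frac{L_\beta}{2}\norm{y-x}{2}^2$, which is exactly the claim.

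\textbf{Main obstacle.} The only delicate point is that $\nabla f$ is assumed Lipschitz only on $\C$, not globally. This is why the argument needs the whole segment $[x,y]$ to lie in $\C$, which convexity of $\C$ guarantees.

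Regularity of $g^\beta$ itself is not an issue. Under \assref{ass:g}\ref{ass:g=lipschitz} with $\beta<\rho^{-1}$, $g^\beta$ is $C^1$ with a globally Lipschitz gradient by \propref{prop:moreauProperties}. Under \assref{ass:g}\ref{ass:g=indicator} we have $g^\beta=\frac{1}{2\beta}\dist_{\D}^2$, which is globally $C^1$ with $\beta^{-1}$-Lipschitz gradient.
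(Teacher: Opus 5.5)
Your proof is correct and follows the same route as the paper: you show that $\nabla\Phi_\beta$ is $L_\beta$-Lipschitz on $\C$ by combining the Lipschitz constants of $\nabla f$ and $\nabla g^\beta$ (using $\|T^*\|_{\mathrm{op}}=\|T\|_{\mathrm{op}}$), then invoke the descent lemma. The only difference is that you prove the descent lemma via the integral along the segment $[x,y]\subset\C$, whereas the paper cites it; this makes explicit why convexity of $\C$ is needed, since $\nabla f$ is only assumed Lipschitz on $\C$.
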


\begin{proof}
By \assref{ass:f}, $\nabla f$ is $\gradfconstant$-Lipschitz on $\C$. By \propref{prop:moreauProperties}, $\nabla g^\beta$ is Lipschitz-continuous with constant
$
    \max\{\beta^{-1},\frac{\rho}{1-\rho\beta}\}.
$
Therefore, $\nabla\Phi_\beta$ is Lipschitz-continuous on $\C$ with constant $L_\beta$. The standard descent lemma for $C^{1,1}$ functions then gives the claim \cite{CGsurvey}.
\end{proof}

Proposition~\ref{prop:phiDescent} provides an estimated smoothness constant of $\Phi_k$ in \eqref{eq:Lbeta}:
\begin{equation}\label{eq:Lk}
    L_k
    :=
    \gradfconstant
    +
    \norm{T}{\op}^2
    \max\left\{\beta_k^{-1},\frac{\rho}{1-\rho\beta_k}\right\}.
\end{equation}

\begin{lemma}[Variation of $\Phi_k$ with respect to $k$]\label{lem:phiEstimate}
Let Assumptions~\ref{ass:f}, \ref{ass:T}, \ref{ass:C}, \ref{ass:g}, and \ref{ass:beta} hold, let $\Phi_k$ be given by \eqref{e:phi}, and let $B_{\prox}$ be given by \eqref{e:bprox}. Then,
\begin{equation}
\label{e:lem-var-phi}
(\forall k\in\N)(\forall x\in \C)\qquad
    \Phi_{k+1}(x)-\Phi_k(x)
    \leq
    \frac{1}{2}\para{\beta_{k+1}^{-1}-\beta_k^{-1}}B_{\prox}^2.
\end{equation}
\end{lemma}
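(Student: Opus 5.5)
Since $f$ cancels, the claim reduces to bounding $g^{\beta_{k+1}}(Tx)-g^{\beta_k}(Tx)$ for $x\in\C$. The plan is to bound the difference of two Moreau envelopes at the same point $y:=Tx\in T(\C)$ by using a feasible (not necessarily optimal) candidate in the minimization defining $g^{\beta_{k+1}}$. Since $\beta_{k+1}\le\beta_k$, the envelope $g^{\beta_{k+1}}$ is larger, so the bound must come from comparing the quadratic penalties at a common point.

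Set $u_k:=\prox_{\beta_k g}(y)$, which attains the minimum in \eqref{e:moreau} for $\beta=\beta_k$ (unique since $\beta_k<\rho^{-1}$). Then $g^{\beta_k}(y)=g(u_k)+\frac{1}{2\beta_k}\|u_k-y\|_2^2$. Plugging $u=u_k$ into the definition of $g^{\beta_{k+1}}(y)$ gives
\begin{equation*}
g^{\beta_{k+1}}(y)\le g(u_k)+\frac{1}{2\beta_{k+1}}\|u_k-y\|_2^2 .
\end{equation*}
Subtracting, $g(u_k)$ cancels, which is legitimate since $g(u_k)<+\infty$: in the indicator case $u_k=\proj_\D(y)\in\D$, and otherwise $g$ is real-valued. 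We obtain
\begin{equation*}
\Phi_{k+1}(x)-\Phi_k(x)\le \tfrac12\bigl(\beta_{k+1}^{-1}-\beta_k^{-1}\bigr)\|y-\prox_{\beta_k g}(y)\|_2^2 .
\end{equation*}

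The coefficient $\beta_{k+1}^{-1}-\beta_k^{-1}$ is nonnegative by \assref{ass:beta}. It therefore remains to bound $\|y-\prox_{\beta_k g}(y)\|_2\le B_{\prox}$ for $y\in T(\C)$, which is \propref{prop:idProxBound}. In the indicator case this is $\dist_\D(y)\le e_{\D,T(\C)}$ for every $k$. In the Lipschitz case it is \propref{prop:moreauDisplacementBound} together with $\beta_k\le\beta_0$, and this also holds for $k=0$, even though \propref{prop:idProxBound} is stated for $k\in\N^*$. Squaring this bound and multiplying by the nonnegative coefficient finishes the proof.

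There is no real obstacle here. The only points needing care are the sign of the coefficient and the finiteness of $g(u_k)$ when cancelling it.
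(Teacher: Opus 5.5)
Your proposal is correct and follows essentially the paper's argument: substitute $\prox_{\beta_k g}(Tx)$ into the minimization defining $g^{\beta_{k+1}}(Tx)$, then bound $\|Tx-\prox_{\beta_k g}(Tx)\|_2$ by $B_{\prox}$, using the nonnegativity of $\beta_{k+1}^{-1}-\beta_k^{-1}$. The only cosmetic difference is that the paper treats the indicator case directly through $g^{\beta}=\frac{1}{2\beta}\dist_{\D}^2$, where your substitution inequality holds with equality, whereas you handle both cases with the single substitution argument; your remark that the bound also holds at $k=0$ is a fair tidy-up.
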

\begin{proof}
For every $x\in\C$, $\Phi_{k+1}(x)-\Phi_k(x) = g^{\beta_{k+1}}(Tx)-g^{\beta_k}(Tx)$.

Suppose that \assref{ass:g}\ref{ass:g=indicator} holds. Then
$
    g^{\beta_k}(Tx)
    =
    \frac{1}{2\beta_k}\dist_{\D}^2(Tx),
$
so since $\dist_{\D}(\cdot)\leq e_{\D,T(\C)} =B_{\prox}$ (see Section~\ref{sec:notation} and \eqref{e:bprox}), Assumption~\ref{ass:beta}
yields
\begin{equation*}
    \Phi_{k+1}(x)-\Phi_k(x)
    =
    \frac{1}{2}\para{\beta_{k+1}^{-1}-\beta_k^{-1}}\dist_{\D}^2(Tx)
     \leq
    \frac{1}{2}\para{\beta_{k+1}^{-1}-\beta_k^{-1}}B_\prox^2.
\end{equation*}

Suppose now that \assref{ass:g}\ref{ass:g=lipschitz} holds.
By \assref{ass:beta}, $\beta_k\geq \beta_{k+1}$, so for every $y\in\R^m$, $g^{\beta_{k+1}}(y)\geq g^{\beta_k}(y)$.
By substituting $\prox_{\beta_k g}(y)$ in the definition of $g^{\beta_{k+1}}(y)$ in \eqref{e:moreau}, we find
\begin{equation*}
    \begin{aligned}
    g^{\beta_{k+1}}(y)-g^{\beta_k}(y)
    &\leq
    \left(
        g(\prox_{\beta_k g}(y))+\frac{1}{2\beta_{k+1}}\norm{y-\prox_{\beta_k g}(y)}{2}^2
    \right)\\
    &\quad\quad-
    \left(
        g(\prox_{\beta_k g}(y))+\frac{1}{2\beta_k}\norm{y-\prox_{\beta_k g}(y)}{2}^2
    \right)\\
    &=
    \frac{1}{2}\para{\beta_{k+1}^{-1}-\beta_k^{-1}}
    \norm{y-\prox_{\beta_k g}(y)}{2}^2\\
    &\leq \frac{1}{2}\para{\beta_{k+1}^{-1}-\beta_k^{-1}}
    \beta_k^2L_g^2,
    \end{aligned}
\end{equation*}
where we have used \propref{prop:moreauDisplacementBound} in the last inequality.
Applying this to the variation of $\Phi$ gives
\begin{equation*}
    \begin{aligned}
    \Phi_{k+1}(x)-\Phi_k(x)
    \leq
    \frac{1}{2}\para{\beta_{k+1}^{-1}-\beta_k^{-1}}\beta_k^2L_g^2\leq
    \frac{1}{2}\para{\beta_{k+1}^{-1}-\beta_k^{-1}}B_\prox^2,
    \end{aligned}
\end{equation*}
where the last inequality uses $\beta_{k+1}\leq \beta_k$  (Assumption~\ref{ass:beta}) and \propref{prop:idProxBound}.
\end{proof}

\begin{lemma}[Range bound for $\Phi_k$ over $\C$]\label{lem:lipschitzPhi}
Let Assumptions~\ref{ass:f}, \ref{ass:T}, \ref{ass:C}, \ref{ass:g}, \ref{ass:gamma}, and \ref{ass:beta} hold, let $\Phi_k$ be given by \eqref{e:phi} and let $B_{\prox}$ be given by \eqref{e:bprox}. Then, for all $x,y\in\C$ 
\begin{equation*}
(\forall k\in\N)\qquad
    \Phi_k(x)-\Phi_k(y)
    \leq
    \fconstant\dc+\beta_k^{-1}B_\prox \dtc.
\end{equation*}
\end{lemma}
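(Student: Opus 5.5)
The plan is to split $\Phi_k(x)-\Phi_k(y)$ into its smooth and nonsmooth parts,
\[
    \Phi_k(x)-\Phi_k(y) = \bigl(f(x)-f(y)\bigr) + \bigl(g^{\beta_k}(Tx)-g^{\beta_k}(Ty)\bigr),
\]
and to bound each part separately. The first difference is at most $\fconstant\dc$ by the estimate recorded in Section~\ref{sec:notation}, which follows from the mean value theorem along the segment $[y,x]\subset\C$ using convexity of $\C$. It then remains to show $g^{\beta_k}(Tx)-g^{\beta_k}(Ty)\leq \beta_k^{-1}B_\prox\dtc$.

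For the envelope difference, I would use convexity of $T(\C)$ together with the mean value theorem for the $C^1$ function $g^{\beta_k}$ from \propref{prop:moreauProperties}. This gives
\[
    g^{\beta_k}(Tx)-g^{\beta_k}(Ty)=\ip{\nabla g^{\beta_k}(z),Tx-Ty}{}
\]
for some $z$ on the segment $[Ty,Tx]\subset T(\C)$. By Cauchy--Schwarz and the gradient formula,
\[
    \ip{\nabla g^{\beta_k}(z),Tx-Ty}{} \leq \beta_k^{-1}\norm{z-\prox_{\beta_k g}(z)}{2}\,\dtc.
\]
\propref{prop:idProxBound} then bounds $\norm{z-\prox_{\beta_k g}(z)}{2}$ by $B_\prox$ uniformly over $z\in T(\C)$, which yields $\beta_k^{-1}B_\prox\dtc$.

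The only point needing care is that \propref{prop:idProxBound} is stated for $k\in\N^*$ while the lemma claims all $k\in\N$. For $k=0$ the bound still holds in both cases. In the indicator case $\prox=\proj_\D$ does not depend on $\beta$, so the maximum is $e_{\D,T(\C)}$. In the Lipschitz case \propref{prop:moreauDisplacementBound} gives $\beta_0L_g$ directly.

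As an alternative, in the Lipschitz case one could instead use $\norm{\nabla g^{\beta_k}}{2}\le L_g$ to get the sharper bound $L_g\dtc$. This is also bounded by $\beta_k^{-1}\beta_0L_g\dtc$, since $\beta_k\le\beta_0$. I expect no real obstacle: the argument is a routine combination of earlier bounds, with the mean value step being the key point.
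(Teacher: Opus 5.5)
Your proof is correct and follows essentially the paper's route: split off $f(x)-f(y)\leq\fconstant\dc$, then bound the envelope difference by a gradient-norm bound along the segment $[Ty,Tx]\subset T(\C)$. In the indicator case this is exactly the paper's argument (Lipschitz continuity of $\dist_{\D}^2$ on $T(\C)$), and in the Lipschitz case the paper uses $\norm{\nabla g^{\beta_k}}{2}\leq L_g$ followed by $L_g\leq\beta_k^{-1}\beta_0L_g$, which is precisely your stated alternative; your unified version via $\nabla g^{\beta}=\beta^{-1}(\Id-\prox_{\beta g})$ and $B_\prox$ merely merges the two cases, and your check that \propref{prop:idProxBound} also holds at $k=0$ is correct.
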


\begin{proof}
For $x,y\in\C$, Section~\ref{sec:notation} provides 
\begin{equation}
    \label{e:36i}
f(x)-f(y)\leq\fconstant\dc.
\end{equation}
Suppose Assumption~\ref{ass:g}\ref{ass:g=indicator} holds. Since $\nabla\dist_{\D}^2(z)=2\para{z-\proj_{\D}(z)}$,
and for every $z\in T(\C)$, $\norm{z-\proj_{\D}(z)}{2}\leq e_{\D,T(\C)}$, the function $\dist_{\D}^2$ is $2e_{\D,T(\C)}$-Lipschitz on the convex set $T(\C)$. Hence
\begin{equation}
\label{e:36ii}
    \begin{aligned}
    g^{\beta_k}(Tx)-g^{\beta_k}(Ty)
    &=
    \frac{1}{2\beta_k}
    \para{\dist_{\D}^2(Tx)-\dist_{\D}^2(Ty)}\\
    &\leq
    \frac{e_{\D,T(\C)}}{\beta_k}\norm{Tx-Ty}{2}\\
    &\leq
    \beta_k^{-1}e_{\D,T(\C)}\dtc.
    \end{aligned}
\end{equation}
Suppose now that \assref{ass:g}\ref{ass:g=lipschitz} holds. By \propref{prop:moreauProperties}, for every $z\in R^m$, $\norm{\nabla g^{\beta_k}(z)}{2}\leq L_g$. Thus $g^{\beta_k}$ is $L_g$-Lipschitz on $\R^m$, and
\begin{equation}
\label{e:36iii}
    g^{\beta_k}(Tx)-g^{\beta_k}(Ty)
    \leq
    L_g\norm{Tx-Ty}{2}
    \leq
    L_g\dtc.
\end{equation}
Since Assumption~\ref{ass:beta} provides $\beta_k\leq\beta_0$, this implies $g^{\beta_k}(Tx)-g^{\beta_k}(Ty) \leq \beta_k^{-1}\beta_0L_g\dtc$.
Combining \eqref{e:36i}, \eqref{e:36ii}, and \eqref{e:36iii} gives the claim.
\end{proof}


\section{Frank-Wolfe Gaps and Stationarity Certificates}
\label{sec:gaps}

If $h$ is differentiable and the constraint set is $\C$, the \emph{Frank-Wolfe gap} is
\begin{equation}
\label{e:fwgap_vanilla}
    \max_{s\in\C}\ip{\nabla h(x),x-s}{}.
\end{equation}
For $x\in\C$, the quantity in \eqref{e:fwgap_vanilla} is nonnegative, and it vanishes at $x^*\in C$ if and only if
\begin{equation}
\label{e:gap-stationarity-rel}
    0\in \nabla h(x^*)+N_{\C}(x^*) \qquad\Leftrightarrow\qquad x^*\text{ is a stationary point of } \underset{x\in \C}{\min}\,h(x),
\end{equation}
where $N_{\C}(x^\star)$ is the normal cone of $\C$ at $x^*$. Thus, for \emph{smooth} nonconvex objective functions, the Frank-Wolfe gap is a natural first-order stationarity certificate \cite{CGsurvey}. However, the objective in \eqref{P} is generally \emph{nonsmooth}. This section delineates the smoothed gaps from the more meaningful stationarity certificates for the original problem \eqref{P}.

\subsection{The Smoothed Frank-Wolfe Gap}

For $\beta\in\left]0,\rho^{-1}\right[$ and $x\in\C$, we define the \emph{smoothed Frank-Wolfe gap} by
\begin{equation}
\label{eq:smoothedGap}
    \gap^\beta(x)
    :=
    \max_{s\in\C}
    \ip{\nabla f(x)+T^*\nabla g^\beta(Tx),x-s}{}.
\end{equation}
Equivalently, $\gap^\beta(x)$ is the usual Frank-Wolfe gap \eqref{e:fwgap_vanilla} with $h=\Phi_k$ given by \eqref{e:phi}.
In view of \eqref{e:gap-stationarity-rel}, $\gap^{\beta}$ characterizes stationarity for the smoothed problem $\min_{x\in\C}\Phi_{\beta}(x)$. Although Step~\ref{a:lmo} of \FRAMES\ allows one to compute $\gap^{\beta}(x_k)=\langle \nabla \Phi_k(x_k),\,x_k-s_k\rangle$,  
this is not a stationarity measure for \eqref{P} a priori.

\subsection{A Nonsmooth Frank-Wolfe Gap for the Indicator Case}

Suppose first that \assref{ass:g}\ref{ass:g=indicator}\ref{ass:g=indicator_nonempty} holds, so that $g=\iota_{\D}$ and there is a minimizer for \eqref{P}. Then $\widetilde{\C}:=\C\cap T^{-1}(\D)$ is nonempty, compact, and convex, and \eqref{P} is equivalent to minimizing $f$ over $\widetilde{\C}$.
The natural Frank-Wolfe stationarity certificate for this problem is below.
\begin{definition}[Signed nonsmooth Frank-Wolfe gap]\label{def:indicatorGap}
    Under \assref{ass:f}, \ref{ass:T}, \ref{ass:C}, and \ref{ass:g}\ref{ass:g=indicator}\ref{ass:g=indicator_nonempty}, the \emph{signed nonsmooth Frank-Wolfe gap} (or \emph{signed gap} for short) at $x\in \R^n$ is
    \begin{equation}
    \label{e:indigap-def}
        \widetilde{\gap}(x)
        :=
        \max_{s\in\widetilde{\C}}
        \ip{\nabla f(x),x-s}{}.
    \end{equation}
\end{definition}

\begin{remark}
    \label{r:infeas-indigap}
    Unlike the smoothed gap \eqref{eq:smoothedGap}, for $x\in\C$, $\widetilde{\gap}(x)$ need not be nonnegative. Although the iterates of \FRAMES\ always reside in $\C$, they are not guaranteed to satisfy $Tx_k\in\D$ for finite $k\in\N$. Thus, $\widetilde{\gap}$ should be interpreted as a signed stationarity measure.
\end{remark}

\begin{lemma}[Indicator stationarity certificate]
\label{lem:indicator_gap_certificate}
Suppose \assref{ass:g}\ref{ass:g=indicator}\ref{ass:g=indicator_nonempty} holds and define the signed gap as in \defref{def:indicatorGap}. If $\xbar\in\widetilde{\C}$, then $\xbar$ is a stationary point of $\min\limits_{x\in\widetilde{\C}}f(x)$ if and only if
\begin{equation*}
    \widetilde{\gap}(\xbar)=0
    \Leftrightarrow
    0\in\nabla f(\xbar)+N_{\widetilde{\C}}(\xbar).
\end{equation*}
Moreover, under \assref{ass:g}\ref{ass:g=indicator}\ref{ass:g=indicator_nonempty_ri}, this condition can be written
\begin{equation*}
    0\in\nabla f(\xbar)+N_{\C}(\xbar)+T^*N_{\D}(T\xbar).
\end{equation*}
\end{lemma}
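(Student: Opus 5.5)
The argument has two parts. First, $\widetilde{\gap}(\xbar)=0$ is equivalent to $0\in\nabla f(\xbar)+N_{\widetilde{\C}}(\xbar)$ for $\xbar\in\widetilde{\C}$; this is just \eqref{e:gap-stationarity-rel} applied with $h=f$ and the constraint set $\widetilde{\C}$. Second, under the relative interior hypothesis, $N_{\widetilde{\C}}(\xbar)$ splits as $N_{\C}(\xbar)+T^*N_{\D}(T\xbar)$, which comes from a standard normal cone calculus rule.

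\textbf{Step 1.} Note that $\widetilde{\C}=\C\cap T^{-1}(\D)$ is nonempty by \assref{ass:g}\ref{ass:g=indicator}\ref{ass:g=indicator_nonempty}. It is also closed and convex, since $T^{-1}(\D)$ is the preimage of a closed convex set under a linear map. It is compact as a closed subset of $\C$.

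\textbf{Step 2.} For $\xbar\in\widetilde{\C}$, choosing $s=\xbar$ in \eqref{e:indigap-def} shows $\widetilde{\gap}(\xbar)\geq 0$. Hence $\widetilde{\gap}(\xbar)=0$ if and only if $\ip{\nabla f(\xbar),\xbar-s}{}\le 0$ for all $s\in\widetilde{\C}$. This is equivalent to $\ip{-\nabla f(\xbar),s-\xbar}{}\le0$ for all $s\in\widetilde{\C}$, i.e.\ $-\nabla f(\xbar)\in N_{\widetilde{\C}}(\xbar)$, which is the definition of first-order stationarity for $\min_{\widetilde{\C}}f$. This settles the first equivalence directly from the definition of the normal cone.

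\textbf{Step 3 (main step).} I need $N_{\widetilde{\C}}(\xbar)=N_{\C}(\xbar)+T^*N_{\D}(T\xbar)$. Write $\iota_{\widetilde{\C}}=\iota_{\C}+\iota_{\D}\circ T$ and use convex subdifferential calculus. The inclusion ``$\supseteq$'' is elementary. Take $u\in N_{\C}(\xbar)$, $v\in N_{\D}(T\xbar)$ and $s\in\widetilde{\C}$. Then
\[
\ip{u+T^*v,s-\xbar}{}=\ip{u,s-\xbar}{}+\ip{v,Ts-T\xbar}{}\le 0,
\]
because $s\in\C$ and $Ts\in\D$.

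For ``$\subseteq$'', apply the chain and sum rule \cite[Theorem~23.8, 23.9]{rockafellar1998variational}-type results (Rockafellar's convex analysis) via Fenchel--Rockafellar duality. Equivalently, consider the lifted sets $A=\{(x,Tx):x\in\C\}$ and $B=\R^n\times\D$ in $\R^n\times\R^m$. The projection of $A\cap B$ onto the first coordinate is $\widetilde{\C}$. The qualification condition needed is $\operatorname{ri}(\D)\cap T(\operatorname{ri}\C)\neq\varnothing$, and since $\operatorname{ri}(T(\C))=T(\operatorname{ri}(\C))$ \cite[Thm.~6.6]{rockafellar1998variational}, this is exactly \assref{ass:g}\ref{ass:g=indicator}\ref{ass:g=indicator_nonempty_ri}.

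Then $\partial(\iota_{\C}+\iota_{\D}\circ T)(\xbar)=N_{\C}(\xbar)+T^*N_{\D}(T\xbar)$. Both functions are polyhedral-free convex, so the relative-interior version of the rule suffices.

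\textbf{Main obstacle.} The delicate point is verifying the qualification condition in the right form. The sum/chain rule requires that some $x\in\operatorname{ri}\C$ have $Tx\in\operatorname{ri}\D$, and one obtains this from $\operatorname{ri}T(\C)=T(\operatorname{ri}\C)$. Combining Step 3 with Step 2 gives the final characterization.
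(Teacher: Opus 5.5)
Your proposal is correct and follows the same route as the paper: the first equivalence is the standard Frank-Wolfe gap characterization \eqref{e:gap-stationarity-rel} on $\widetilde{\C}$, and the splitting $N_{\widetilde{\C}}(\xbar)=N_{\C}(\xbar)+T^*N_{\D}(T\xbar)$ comes from the convex sum/chain rule under the qualification condition $\operatorname{ri}(\D)\cap\operatorname{ri}(T(\C))\neq\varnothing$ (the paper cites \cite[Proposition~6.19 \& Theorem~16.47]{bauschke2017convex} for this). One small slip: the theorems numbered 6.6, 23.8 and 23.9 that you invoke are from Rockafellar's \emph{Convex Analysis}, not the Rockafellar--Wets reference \cite{rockafellar1998variational}.
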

\begin{proof}
The equivalence between $\widetilde{\gap}(x)=0$ and $0\in\nabla f(x)+N_{\widetilde{\C}}(x)$
when $x\in\widetilde{\C}$ is the standard Frank-Wolfe gap characterization \eqref{e:gap-stationarity-rel}.
The second claim follows from \cite[Proposition~6.19 \&  Theorem~16.47]{bauschke2017convex} which shows $N_{\widetilde{\C}}=N_{\C}+T^*\circ N_{\D}\circ T$.
\end{proof}

The indicator case therefore necessitates showing that two quantities vanish: feasibility, measured by $\dist_{\D}(Tx)$, and stationarity over the true feasible set $\widetilde{\C}$, measured by $\widetilde{\gap}(x)$.
We finish this section with a nonlinear generalization of  \cite[Lemma~3.5]{woodstock2025splitting} that provides a relationship between $\gap^{\beta}$, $\dist_{D}\circ T$, and $\widetilde{\gap}$.

\begin{lemma}[Indicator gap bound]
\label{lem:indicator_gap_transfer}
Let $\gap^{\beta}$ and $\widetilde{\gap}$ as in \eqref{eq:smoothedGap} and \eqref{e:indigap-def} respectively. If \assref{ass:g}\ref{ass:g=indicator}\ref{ass:g=indicator_nonempty} holds, then
\begin{equation}
\label{eq:indicator_gap_transfer}
(\forall x\in\C)(\forall \beta>0)\qquad
    \widetilde{\gap}(x)
    +
    \frac{1}{\beta}\dist_{\D}^2(Tx)
    \leq
    \gap^\beta(x).
\end{equation}
\end{lemma}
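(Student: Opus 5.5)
The plan is to compare the maximization defining $\gap^\beta(x)$ over all of $\C$ with the one defining $\widetilde{\gap}(x)$ over $\widetilde{\C}\subset\C$. Fix $x\in\C$ and $\beta>0$. Since $\widetilde{\C}\subset\C$, we have
\begin{equation*}
    \gap^\beta(x)\geq \max_{s\in\widetilde{\C}}\ip{\nabla f(x)+T^*\nabla g^\beta(Tx),x-s}{}.
\end{equation*}
It then suffices to show that, for every $s\in\widetilde{\C}$, the extra smoothing term satisfies
\begin{equation*}
    \ip{T^*\nabla g^\beta(Tx),x-s}{}\geq \tfrac{1}{\beta}\dist_{\D}^2(Tx).
\end{equation*}
Adding this to $\ip{\nabla f(x),x-s}{}$ and taking the maximum over $s\in\widetilde{\C}$ (nonempty and compact by \assref{ass:g}\ref{ass:g=indicator}\ref{ass:g=indicator_nonempty}) gives exactly \eqref{eq:indicator_gap_transfer}.

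For the key inequality we use \propref{prop:moreauProperties}, which gives $\nabla g^\beta(Tx)=\frac{1}{\beta}(Tx-\proj_{\D}(Tx))$. Moving $T^*$ across the inner product,
\begin{equation*}
    \ip{T^*\nabla g^\beta(Tx),x-s}{}=\tfrac{1}{\beta}\ip{Tx-\proj_{\D}(Tx),Tx-Ts}{}.
\end{equation*}
Next, split $Tx-Ts=(Tx-\proj_{\D}(Tx))+(\proj_{\D}(Tx)-Ts)$. The first piece contributes $\frac1\beta\|Tx-\proj_{\D}(Tx)\|_2^2=\frac1\beta\dist_{\D}^2(Tx)$. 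For the second piece, note that $s\in\widetilde{\C}$ means $Ts\in\D$. The variational characterization of the projection onto the closed convex set $\D$ then gives $\ip{Tx-\proj_{\D}(Tx),Ts-\proj_{\D}(Tx)}{}\leq0$, so the second piece is nonnegative. Together these give the claimed lower bound.

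There is no real obstacle here. The only points to get right are that restricting the maximum to $\widetilde{\C}$ goes in the correct direction, and that the projection inequality is applied with $Ts\in\D$, which uses $s\in T^{-1}(\D)$. Neither $x\in T^{-1}(\D)$ nor positivity of either gap is needed, which is consistent with \remref{r:infeas-indigap}.
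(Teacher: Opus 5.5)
Your proof is correct and follows the paper's argument essentially step for step. You restrict the maximum defining $\gap^\beta(x)$ from $\C$ to $\widetilde{\C}$, rewrite the smoothing term via $\nabla g^\beta(Tx)=\frac{1}{\beta}(Tx-\proj_{\D}(Tx))$, and apply the projection characterization with $Ts\in\D$ to extract $\frac{1}{\beta}\dist_{\D}^2(Tx)$. The paper writes the same argument as an upper bound on $-\gap^\beta(x)$ using minima of $\langle\cdot,s-x\rangle$, which is only a sign convention.
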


\begin{proof}
Since $g=\iota_{\D}$, Proposition~\ref{prop:moreauProperties} provides $
    \nabla (g^\beta\circ T)(x)
    =
    \frac{1}{\beta}T^*\para{Tx-\proj_{\D}(Tx)}$.
Thus,
\begin{equation}
    \begin{aligned}\label{eq:gap_transfer_step}
    -\gap^\beta(x)
    &=
    \min_{s\in\C}
    \Big\langle \nabla f(x)+\frac{1}{\beta}T^*\big(Tx-\proj_{\D}(Tx)\big),\,s-x\Big\rangle
    \\
    &\leq
    \min_{s\in\widetilde{\C}}
    \Big\langle\nabla f(x)+\frac{1}{\beta}T^*\big(Tx-\proj_{\D}(Tx)\big),\,s-x\Big\rangle\\
    &=
    \min_{s\in\widetilde{\C}}
    \big\langle \nabla f(x),\,s-x\big\rangle
    +
    \frac{1}{\beta}
    \langle Tx-\proj_{\D}(Tx),\,Ts-Tx\rangle.
    \end{aligned}
\end{equation}
For every $s\in\widetilde{\C}$, one has $Ts\in\D$. By \cite[Theorem~3.16]{bauschke2017convex},
$
    \ip{Tx-\proj_{\D}(Tx),Ts-\proj_{\D}(Tx)}{}\leq 0.
$
Consequently, for all $s\in\widetilde{\C}$,
    \begin{align}
    \nonumber
    \ip{Tx-\proj_{\D}(Tx),Ts-Tx}{}
    &=
    \ip{Tx-\proj_{\D}(Tx),Ts-\proj_{\D}(Tx)}{}
    -
    \norm{Tx-\proj_{\D}(Tx)}{2}^2\\
    \label{e:43i}
    &\leq
    -\dist_{\D}^2(Tx).
    \end{align}
 Combining  \eqref{eq:gap_transfer_step} and \eqref{e:43i} 
gives
$
    -\gap^\beta(x)
    \leq
    -\widetilde{\gap}(x)
    -
    \frac{1}{\beta}\dist_{\D}^2(Tx),
$
i.e., \eqref{eq:indicator_gap_transfer} holds.
\end{proof}

\subsection{A Nonsmooth Frank-Wolfe Gap for Lipschitz Weakly Convex \texorpdfstring{$g$}{g}}

We now suppose that \assref{ass:g}\ref{ass:g=lipschitz} holds, which implies that $g$ has full domain and is locally Lipschitz.
Since $g$ is weakly convex and defined on a finite-dimensional space, it is also \emph{subdifferentially regular} in the sense of \cite[Definition~7.25]{rockafellar1998variational}, since $g$ is the sum of two subdifferentially regular functions (the convex function \cite[Example~7.27]{rockafellar1998variational} $ h(y):=g(y)+\frac{\rho}{2}\norm{y}{2}^2$ and the twice continuously differentiable function $y\mapsto-\frac{\rho}{2}\norm{y}{2}^2$) satisfying the usual qualification conditions \cite[Corollary~10.9]{rockafellar1998variational}. Moreover, for locally Lipschitz subdifferentially regular functions, the regular, limiting, and Clarke subdifferentials are compatible in the sense of \cite[Definition~7.25, Corollary~8.11, Theorem~8.49]{rockafellar1998variational}. Thus, throughout this case, we write simply $\partial g$ for the Clarke subdifferential.

We first record the stationarity condition for \eqref{P} in this setting.

\begin{proposition}[Stationarity condition in the Lipschitz weakly convex case]
\label{prop:lipschitz_stationarity_condition}
Suppose \assref{ass:g}\ref{ass:g=lipschitz} holds. Then, for every $x\in\C$,
\begin{equation}
\label{eq:lipschitz_subdiff_formula}
    \partial\left(f+g\circ T+\iota_{\C}\right)(x)
    =
    \nabla f(x)+T^*\partial g(Tx)+N_{\C}(x),
\end{equation}
where $\partial g$ denotes the Clarke subdifferential. As a result, $x\in\C$ is a stationary point of \eqref{P} if and only if
\begin{equation}
\label{eq:lipschitz_stationarity_condition}
    0\in\nabla f(x)+T^*\partial g(Tx)+N_{\C}(x).
\end{equation}
Equivalently, $x\in\C$ is stationary if and only if there exists $\xi\in\partial g(Tx)$ such that
\begin{equation}
\label{eq:lipschitz_stationarity_condition_xi}
    -\para{\nabla f(x)+T^*\xi}\in N_{\C}(x).
\end{equation}
\end{proposition}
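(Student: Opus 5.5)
The plan is to establish the subdifferential sum rule \eqref{eq:lipschitz_subdiff_formula} in two stages, and then read off the stationarity characterizations.

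\textbf{Stage 1: the smooth term.} Since $f$ is continuously differentiable on an open set containing $\C$, the standard rule $\partial(f+h)(x)=\nabla f(x)+\partial h(x)$ applies to any proper lsc $h$ \cite[Exercise~8.8]{rockafellar1998variational}. Taking $h:=g\circ T+\iota_{\C}$ separates $f$ from the rest.

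\textbf{Stage 2: the nonsmooth part.} It remains to show
\[
\partial(g\circ T+\iota_{\C})(x)=T^*\partial g(Tx)+N_{\C}(x).
\]
I will pass to convex analysis using weak convexity. Set $h_0:=g+\frac{\rho}{2}\|\cdot\|_2^2$, which is convex and finite on $\R^m$. Then
\[
g\circ T=h_0\circ T-\tfrac{\rho}{2}\|T\cdot\|_2^2,
\]
where the last term is $C^2$. Applying Stage~1 again to strip this quadratic, it suffices to compute the subdifferential of the convex function $h_0\circ T+\iota_{\C}$.

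\emph{Chain rule.} Because $h_0$ has full domain, the convex chain rule gives $\partial(h_0\circ T)=T^*\partial h_0(T\cdot)$ with no qualification condition \cite[Corollary~16.53]{bauschke2017convex}.

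\emph{Sum rule.} Because $h_0\circ T$ is finite and continuous everywhere, the convex sum rule with $\iota_{\C}$ holds, again with no qualification needed.

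\emph{Recombining.} We have $\partial h_0(y)=\partial g(y)+\rho y$, which holds by the regularity discussion preceding the proposition. Hence
\[
T^*\partial h_0(Tx)-\rho T^*Tx=T^*\partial g(Tx).
\]
Since convex subdifferentials agree with limiting and Clarke ones for these regular functions, this yields \eqref{eq:lipschitz_subdiff_formula}.

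\textbf{Main obstacle.} The step I expect to need the most care is justifying that the various subdifferential notions coincide, so that the identity is valid for the Clarke subdifferential of the whole objective. I will handle this by noting that $f+g\circ T+\iota_{\C}$ is the sum of a $C^1$ function and a convex function, and is therefore subdifferentially regular. This is the same argument the paper used for $g$, citing \cite[Corollary~10.9, Theorem~8.49]{rockafellar1998variational}.

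\textbf{Stationarity.} With the formula in hand, stationarity, defined as $0\in\partial(f+g\circ T+\iota_{\C})(x)$, is exactly \eqref{eq:lipschitz_stationarity_condition}. Unpacking the Minkowski sum gives the existence of $\xi\in\partial g(Tx)$ such that $-(\nabla f(x)+T^*\xi)\in N_{\C}(x)$, which is \eqref{eq:lipschitz_stationarity_condition_xi}.
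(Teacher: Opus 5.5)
Your proof is correct, but Stage 2 takes a different route from the paper's.

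\textbf{The paper's route.} The paper stays within nonconvex calculus throughout. After removing $f$ with the smooth sum rule, it splits $g\circ T+\iota_{\C}$ using the sum rule for subdifferentially regular functions \cite[Corollary~10.9]{rockafellar1998variational}. It then applies Clarke's chain rule for a linear map \cite[Theorem~2.3.10]{clarke1990optimization}, where equality (rather than mere inclusion) comes from the regularity of $g$.

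\textbf{Your route.} You use weak convexity directly. Subtracting the $C^2$ quadratic $\frac{\rho}{2}\|T\cdot\|_2^2$ turns the nonsmooth part into the convex function $h_0\circ T+\iota_{\C}$. For this function the convex chain and sum rules hold because their qualifications are automatic: $h_0$ has full domain and $h_0\circ T$ is continuous. The terms $\pm\rho T^*Tx$ then cancel when you shift back.

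\textbf{What each approach buys.} Your argument needs only the smooth sum rule, convex calculus, and the coincidence of convex, limiting and Clarke subdifferentials for regular functions. It also makes explicit two points the paper leaves implicit:
\begin{enumerate}
\item the qualification condition in Corollary~10.9, which is trivial there since $\partial^\infty(g\circ T)(x)=\{0\}$ for Lipschitz $g$;
\item why the left-hand side, a subdifferential of a non-Lipschitz function, is unambiguous: the whole objective splits as a $C^1$ function plus a convex function, hence is regular.
\end{enumerate}
The paper's argument is shorter. It also uses weak convexity only to obtain regularity of $g$, so it applies verbatim to any Lipschitz, subdifferentially regular $g$. Your reduction, by contrast, genuinely relies on the convex-minus-quadratic structure.
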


\begin{proof}
Since $f\in C^1$ on a neighborhood of $\C$, the smooth sum rule (e.g., \cite[Exercise~10.10]{rockafellar1998variational}) gives
\begin{equation}
\label{e:44i}
    \partial\left(f+g\circ T+\iota_{\C}\right)(x)
    =
    \nabla f(x)+\partial\left(g\circ T+\iota_{\C}\right)(x).
\end{equation}
 Since $\iota_{\C}$ is subdifferentially regular at every $x\in\C$ \cite[Example~7.27]{rockafellar1998variational}, and $g\circ T$ is subdifferentially regular because $g$ is subdifferentially regular and $T$ is linear, the sum rule for regular functions \cite[Corollary~10.9]{rockafellar1998variational} gives
\begin{equation}
\label{e:44ii}
    \partial\left(g\circ T+\iota_{\C}\right)(x)
    =
    \partial(g\circ T)(x)+N_{\C}(x).
\end{equation}
Finally, the Clarke chain rule for composition with a linear map gives
\begin{equation}
\label{e:44iii}
    \partial(g\circ T)(x)
    =
    T^*\partial g(Tx),
\end{equation}
see \cite[Theorem~2.3.10]{clarke1990optimization}. Combining \eqref{e:44i}, \eqref{e:44ii}, and \eqref{e:44iii} yields \eqref{eq:lipschitz_subdiff_formula}. The stationarity condition \eqref{eq:lipschitz_stationarity_condition} follows from the definition of a stationary point, and \eqref{eq:lipschitz_stationarity_condition_xi} is the same inclusion written with a selected subgradient $\xi\in\partial g(Tx)$.
\end{proof}

Proposition~\ref{prop:lipschitz_stationarity_condition} motivates the following nonsmooth notion of a Frank-Wolfe gap.
\begin{definition}[Nonsmooth Frank-Wolfe subgradient gap]\label{def:nonsmoothGap}
    Under \assref{ass:f}, \ref{ass:T}, \ref{ass:C}, and \ref{ass:g}\ref{ass:g=lipschitz}, we define the \emph{nonsmooth Frank-Wolfe subgradient gap} (or \emph{subgradient gap} for short) at $x\in\R^n$ with subgradient $\xi\in\partial g(Tx)$ to be
    \begin{equation*}
        \gap(x;\xi)
        :=
        \max_{s\in\C}
        \ip{\nabla f(x)+T^*\xi,x-s}{}.
    \end{equation*}
\end{definition}

\begin{lemma}[Lipschitz weakly convex stationarity certificate]
\label{lem:lipschitz_gap_certificate}
Suppose \assref{ass:g}\ref{ass:g=lipschitz} holds and define the subgradient gap as in \defref{def:nonsmoothGap}. A point $x\in\C$ satisfies the stationarity condition
\begin{equation}
\label{e:ns-stationary-asm2}
    0\in\nabla f(x)+T^*\partial g(Tx)+N_{\C}(x)
\end{equation}
if and only if there exists $\xi\in\partial g(Tx)$ such that
\begin{equation*}
    \gap(x;\xi)=0.
\end{equation*}
\end{lemma}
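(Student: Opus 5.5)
The argument reduces the statement to the standard Frank-Wolfe gap characterization \eqref{e:gap-stationarity-rel}, applied to a linear function, together with Proposition~\ref{prop:lipschitz_stationarity_condition}. Fix $x\in\C$. For any $\xi\in\partial g(Tx)$, set $v:=\nabla f(x)+T^*\xi$. Then $\gap(x;\xi)=\max_{s\in\C}\ip{v,x-s}{}$ is exactly the Frank-Wolfe gap \eqref{e:fwgap_vanilla} of the linear function $h(y):=\ip{v,y}{}$ over $\C$. Its gradient is constantly $v$.

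First I note that $\gap(x;\xi)\geq 0$, since the choice $s=x\in\C$ gives the value $0$. Next I show that $\gap(x;\xi)=0$ if and only if $-v\in N_{\C}(x)$. By the definition of the normal cone in Section~\ref{sec:notation}, $-v\in N_{\C}(x)$ means $\ip{-v,s-x}{}\leq 0$ for all $s\in\C$. This is the same as $\ip{v,x-s}{}\leq0$ for all $s\in\C$, i.e.\ $\gap(x;\xi)\leq 0$. Combined with nonnegativity, this gives the equivalence. It is also precisely \eqref{e:gap-stationarity-rel} with $h$ linear, so one could simply cite that.

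Then I chain the equivalences. By Proposition~\ref{prop:lipschitz_stationarity_condition}, specifically \eqref{eq:lipschitz_stationarity_condition_xi}, the condition \eqref{e:ns-stationary-asm2} holds if and only if some $\xi\in\partial g(Tx)$ satisfies $-(\nabla f(x)+T^*\xi)\in N_{\C}(x)$. By the previous step, that holds for a given $\xi$ if and only if $\gap(x;\xi)=0$. Taking the existential over $\xi\in\partial g(Tx)$ on both sides proves the lemma. Note that $\partial g(Tx)$ is nonempty because $g$ is locally Lipschitz, although the existential phrasing does not actually require this.

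There is no real obstacle here. The only point needing care is that the inclusion $0\in\nabla f(x)+T^*\partial g(Tx)+N_{\C}(x)$ is a set-sum statement. Unpacking it into the existence of a single selection $\xi$, with $-(\nabla f(x)+T^*\xi)\in N_{\C}(x)$, is immediate from the definition of Minkowski sums and is already recorded in \eqref{eq:lipschitz_stationarity_condition_xi}.
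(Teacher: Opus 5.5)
Your proof is correct and follows essentially the same route as the paper. Both arguments get nonnegativity of $\gap(x;\xi)$ from the choice $s=x$, read off the equivalence $\gap(x;\xi)=0 \iff -(\nabla f(x)+T^*\xi)\in N_{\C}(x)$ directly from the definition of the normal cone, and then quantify existentially over $\xi\in\partial g(Tx)$ to recover the set-sum inclusion.
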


\begin{proof}
Fix $x\in\C$ and $\xi\in\partial g(Tx)$.
Since $x\in\C$, the choice $s=x$ gives $\gap(x;\xi)\geq0$. Moreover, $\gap(x;\xi)=0$ $\Leftrightarrow$ $\forall s\in\C\;\;
    \ip{\nabla f(x)+T^*\xi,x-s}{}\leq0$ $\Leftrightarrow$ 
    $\forall s\in\C\;\;
    \ip{-(\nabla f(x)+T^*\xi),s-x}{}\leq0
    $,
which is precisely the condition $
    -(\nabla f(x)+T^*\xi)\in N_{\C}(x)$.
Thus, $\gap(x;\xi)=0$ for some $\xi\in\partial g(Tx)$ if and only if \eqref{e:ns-stationary-asm2} holds.
\end{proof}

The smoothed gap and the nonsmooth gap are related but not identical. Indeed, the Moreau envelope produces the vector
\begin{equation*}
    \xi_\beta(x)
    :=
    \nabla g^\beta(Tx)
    =
    \frac{Tx-\prox_{\beta g}(Tx)}{\beta}.
\end{equation*}
By the proximal optimality condition, $\xi_\beta(x)\in\partial g(\prox_{\beta g}(Tx))$.
Thus, $\xi_\beta(x)$ is a Clarke subgradient of $g$ at $\prox_{\beta g}(Tx)$, not necessarily at $Tx$. Due to this, $\gap^\beta(x)$ is not simply the subgradient gap, $\gap(x;\xi)$, evaluated at $\xi=\xi_\beta(x)$. This mismatch is one of the main motivators for the gap-transfer analysis in Section~\ref{sec:gap_transfer}.

\begin{remark}[The two nonsmooth gaps are distinct]
\label{rmk:gaps-are-distinct}
The signed gap given in \defref{def:indicatorGap} and the subgradient gap given in \defref{def:nonsmoothGap} are different stationarity certificates. Using \defref{def:nonsmoothGap} under Assumption~\ref{ass:g}\ref{ass:g=indicator} would require an element of $\partial\iota_{\D}(Tx)=N_{\D}(Tx)$, which is empty whenever $Tx\notin\D$. In view of Remark~\ref{r:infeas-indigap}, since \FRAMES\ may generate iterates satisfying $x_k\in\C$ but $Tx_k\notin\D$, the subgradient gap is not informative, as it would be identically $+\infty$. On the other hand, \defref{def:indicatorGap} provides a finite signed quantity that can be analyzed in tandem with the feasibility measure $\dist_{\D}(Tx_k)$.
\end{remark}


\section{Smoothed Convergence Rates and Asymptotic Stationarity}
\label{sec:smoothed_convergence}

In this section, we prove that the average and best smoothed gaps generated by \FRAMES\ converge to zero at an explicit rate. The main difference from the standard smooth Frank-Wolfe analysis is that the objective changes with $k$, since the smoothing parameter $\beta_k$ is decreasing to zero. The estimates from \secref{sec:preliminary} allow us to control both the usual Frank-Wolfe descent term and the variation of the smoothed objectives $\Phi_k$.

\subsection{An Energy Estimate for the Smoothed Gaps}

We begin with the basic descent estimate for one iteration of \FRAMES, using the definition of smoothed Frank-Wolfe gap from \eqref{eq:smoothedGap} throughout.

\begin{lemma}[One-step smoothed descent]
\label{lem:one_step_smoothed_descent}
Let Assumptions~\ref{ass:f}, \ref{ass:T}, \ref{ass:C}, \ref{ass:g}, \ref{ass:gamma}, and \ref{ass:beta} hold and let $\seq{x_k}$ be generated by \FRAMES. Then, for every $k\in\N$,
\begin{equation}
\label{e:phi_k-descent}
    \Phi_k(x_{k+1})
    \leq
    \Phi_k(x_k)
    -
    \gamma_k\gap^{\beta_k}(x_k)
    +
    \frac{L_k\gamma_k^2}{2}\dc^2,
\end{equation}
where $L_k$ is the estimated smoothness constant defined in \eqref{eq:Lk}. Moreover,
\begin{equation}
\label{e:phi-descent}
    \Phi_{k+1}(x_{k+1})
    \leq
    \Phi_k(x_k)
    -
    \gamma_k\gap^{\beta_k}(x_k)
    +
    \frac{L_k\gamma_k^2}{2}\dc^2
    +
    \frac{1}{2}\para{\beta_{k+1}^{-1}-\beta_k^{-1}}B_\prox^2.
\end{equation}
where $B_\prox$ is defined in \eqref{e:bprox}.
\end{lemma}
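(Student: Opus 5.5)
The plan is to apply the descent lemma of \propref{prop:phiDescent} to $\Phi_k$ at the pair $(x_k,x_{k+1})$, and then pass from $\Phi_k$ to $\Phi_{k+1}$ using \lemref{lem:phiEstimate}.

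\textbf{Step 1: invoke the descent lemma.} Since $x_0\in\C$, $s_k\in\C$ and $\gamma_k\in\left]0,1\right]$, an induction on $k$ gives $x_k,x_{k+1}\in\C$. Hence \propref{prop:phiDescent} applies with $\beta=\beta_k\in\left]0,\rho^{-1}\right[$, $x=x_k$, $y=x_{k+1}$. Its smoothness constant $L_{\beta_k}$ is exactly $L_k$ from \eqref{eq:Lk}, which yields
\begin{equation*}
    \Phi_k(x_{k+1})\leq \Phi_k(x_k)+\ip{\nabla\Phi_k(x_k),x_{k+1}-x_k}{}+\frac{L_k}{2}\norm{x_{k+1}-x_k}{2}^2 .
\end{equation*}

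\textbf{Step 2: evaluate the two terms.} Substitute $x_{k+1}-x_k=\gamma_k(s_k-x_k)$. Because $s_k$ is an LMO output for $\nabla\Phi_k(x_k)$, the linear term equals
\begin{equation*}
    \gamma_k\ip{\nabla\Phi_k(x_k),s_k-x_k}{}=-\gamma_k\max_{s\in\C}\ip{\nabla\Phi_k(x_k),x_k-s}{}=-\gamma_k\gap^{\beta_k}(x_k),
\end{equation*}
by the definition \eqref{eq:smoothedGap}. For the quadratic term, $\norm{s_k-x_k}{2}\leq\dc$ since both points lie in $\C$, so it is at most $\frac{L_k\gamma_k^2}{2}\dc^2$. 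Together these give \eqref{e:phi_k-descent}.

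\textbf{Step 3: change the smoothing level.} Since $x_{k+1}\in\C$, \lemref{lem:phiEstimate} applied at $x=x_{k+1}$ gives
\begin{equation*}
    \Phi_{k+1}(x_{k+1})\leq\Phi_k(x_{k+1})+\tfrac12\bigl(\beta_{k+1}^{-1}-\beta_k^{-1}\bigr)B_\prox^2 .
\end{equation*}
Adding this to \eqref{e:phi_k-descent} yields \eqref{e:phi-descent}.

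No step is a real obstacle. The only points needing care are that the descent lemma is stated only for points of $\C$, and that \lemref{lem:phiEstimate} likewise requires its argument to lie in $\C$; both are covered by the feasibility of the iterates established in Step 1.
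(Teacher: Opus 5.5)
Your proposal is correct and matches the paper's proof: \propref{prop:phiDescent} at $(x_k,x_{k+1})$, the LMO identity turning the linear term into $-\gamma_k\gap^{\beta_k}(x_k)$, the diameter bound on the quadratic term, and then adding \eqref{e:lem-var-phi} at $x=x_{k+1}$. Your explicit induction showing that the iterates stay in $\C$, so that both results apply, is a small but welcome detail the paper leaves implicit.
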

\begin{proof}
By \propref{prop:phiDescent}, applied to $\Phi_k$ at $x_k$ and $x_{k+1}$,
\begin{equation*}
    \Phi_k(x_{k+1})
    \leq
    \Phi_k(x_k)
    +
    \ip{\nabla\Phi_k(x_k),x_{k+1}-x_k}{}
    +
    \frac{L_k}{2}\norm{x_{k+1}-x_k}{2}^2.
\end{equation*}
Since $x_{k+1}=x_k+\gamma_k(s_k-x_k)$, we have
\begin{equation*}
    \ip{\nabla\Phi_k(x_k),x_{k+1}-x_k}{}
    =
    -\gamma_k\ip{\nabla\Phi_k(x_k),x_k-s_k}{}
    =
    -\gamma_k\gap^{\beta_k}(x_k),
\end{equation*}
where the last equality follows from the definition of $s_k$ in \FRAMES. Moreover, since $x_k,s_k\in\C$,
\begin{equation*}
    \norm{x_{k+1}-x_k}{2}
    =
    \gamma_k\norm{s_k-x_k}{2}
    \leq
    \gamma_k\dc.
\end{equation*}
This proves \eqref{e:phi_k-descent}. Adding \eqref{e:lem-var-phi} of Lemma~\ref{lem:phiEstimate} to \eqref{e:phi_k-descent} yields \eqref{e:phi-descent}.
\end{proof}

Summing Lemma~\ref{lem:one_step_smoothed_descent} over $k\in\N$ yields the main estimate used to find convergence rates.

\begin{lemma}[Energy estimate for smoothed gaps]
\label{lem:energy}
Let Assumptions~\ref{ass:f}, \ref{ass:T}, \ref{ass:C}, \ref{ass:g}, \ref{ass:gamma}, and \ref{ass:beta} hold and let $\seq{x_k}$ be generated by \FRAMES. Then, for every $N\in\N^*$,
\begin{equation*}
    0
    \leq
    \sum_{k=0}^{N-1}\gap^{\beta_k}(x_k)
    \leq
    \frac{\fconstant\dc+B\beta_N^{-1}}{\gamma_{N-1}}
    +
    \frac{\dc^2}{2}\sum_{k=0}^{N-1}\gamma_k\para{\gradfconstant
    +
    \norm{T}{\op}^2M_{\rho,\beta_0}\beta_k^{-1}},
\end{equation*}
where $B:=\max\{\frac{1}{2}B_\prox^2,\ B_\prox\dtc\}$ and $M_{\rho,\beta_0}:=\max\{1,\tfrac{\rho\beta_0}{1-\rho\beta_0}\}$.
\end{lemma}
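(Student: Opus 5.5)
The plan is to start from the one-step estimate \eqref{e:phi-descent} of Lemma~\ref{lem:one_step_smoothed_descent}, divide by $\gamma_k$, and sum with an Abel-summation (telescoping with varying weights) argument. Nonnegativity of the sum is immediate, since each $\gap^{\beta_k}(x_k)\geq0$ (take $s=x_k\in\C$). Writing $h_k:=\Phi_k(x_k)$, dividing \eqref{e:phi-descent} by $\gamma_k>0$ gives
\[
\gap^{\beta_k}(x_k)\leq \frac{h_k-h_{k+1}}{\gamma_k}+\frac{L_k\gamma_k}{2}\dc^2+\frac{1}{2\gamma_k}\para{\beta_{k+1}^{-1}-\beta_k^{-1}}B_\prox^2 .
\]
For the smoothness term I will use \eqref{eq:Lk} together with $\max\{\beta_k^{-1},\rho/(1-\rho\beta_k)\}\leq M_{\rho,\beta_0}\beta_k^{-1}$. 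This holds because $\rho\beta_k/(1-\rho\beta_k)\leq\rho\beta_0/(1-\rho\beta_0)$ by monotonicity of $t\mapsto t/(1-t)$ and $\beta_k\leq\beta_0$. Summing then produces exactly the second term of the claimed bound.

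The hard step is the telescoping of $\sum_k (h_k-h_{k+1})/\gamma_k$ combined with the variation terms, because the objective changes with $k$. I would avoid bounding $h_k$ uniformly, which would be too crude since $g^{\beta_k}$ can blow up like $\beta_k^{-1}$. Instead I add and subtract $\Phi_k(x^\star_k)$ for a reference point, or more simply fix an arbitrary $y\in\C$ and set $r_k:=\Phi_k(x_k)-\Phi_k(y)$. Lemma~\ref{lem:lipschitzPhi} gives $r_k\leq \fconstant\dc+\beta_k^{-1}B_\prox\dtc\leq \fconstant\dc+B\beta_k^{-1}$. Lemma~\ref{lem:phiEstimate} at $y$ gives $h_{k+1}\geq r_{k+1}+\Phi_k(y)$ minus nothing harmful. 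I will then organise it as
\[
h_k-h_{k+1}+\tfrac12(\beta_{k+1}^{-1}-\beta_k^{-1})B_\prox^2 \;\le\; r_k-r_{k+1}+\tfrac12(\beta_{k+1}^{-1}-\beta_k^{-1})B_\prox^2+\bigl(\Phi_{k+1}(y)-\Phi_k(y)\bigr)\cdot 0 ,
\]
choosing $y$ cleverly if necessary. Alternatively, I will keep the $\Phi$-variation term and bound it by $B(\beta_{k+1}^{-1}-\beta_k^{-1})$, using $\tfrac12B_\prox^2\le B$.

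Abel summation with $1/\gamma_k$ nondecreasing (Assumption~\ref{ass:gamma}) gives
\[
\sum_{k=0}^{N-1}\frac{r_k-r_{k+1}}{\gamma_k}=\frac{r_0}{\gamma_0}-\frac{r_N}{\gamma_{N-1}}+\sum_{k=1}^{N-1}r_k\Bigl(\frac1{\gamma_k}-\frac1{\gamma_{k-1}}\Bigr).
\]
The $\beta$-variation terms combine similarly. Each $r_k\le \fconstant\dc+B\beta_k^{-1}\le \fconstant\dc+B\beta_N^{-1}$, since $\beta_k^{-1}$ is nondecreasing. Likewise $-r_N\le \fconstant\dc+B\beta_N^{-1}$, since Lemma~\ref{lem:lipschitzPhi} is symmetric in $x,y$. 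The weighted $\beta$-variation sum $\sum_k \gamma_k^{-1}(\beta_{k+1}^{-1}-\beta_k^{-1})B$ is at most $\gamma_{N-1}^{-1}B(\beta_N^{-1}-\beta_0^{-1})$. The weights telescope to $1/\gamma_{N-1}$, giving a total of $(\fconstant\dc+B\beta_N^{-1})/\gamma_{N-1}$.

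The main obstacle is bookkeeping the constants so that they collapse to a single $\fconstant\dc+B\beta_N^{-1}$ rather than twice that. If the naive count yields a factor $2$, I would sharpen it by choosing $y=x_N$, so that $r_N=0$, and by absorbing the $\beta$-variation into the $r_k$ bounds via the monotone telescoping. Failing that, I would accept a constant factor adjustment.
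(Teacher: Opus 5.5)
Your overall route is the paper's: divide \eqref{e:phi-descent} by $\gamma_k$, pass to residuals using the pointwise monotonicity $\Phi_k\le\Phi_{k+1}$, Abel-sum with nondecreasing $1/\gamma_k$, and bound the residuals via Lemma~\ref{lem:lipschitzPhi}. Fixing the reference point $y=x_N$, so that $r_N=0$, is a legitimate substitute for the paper's moving reference $r_k:=\Phi_k(x_k)-\min_{\C}\Phi_k$. A small correction: the inequality $\Phi_{k+1}(y)\ge\Phi_k(y)$ comes from monotonicity of the Moreau envelope in $\beta$, not from Lemma~\ref{lem:phiEstimate}, which only bounds $\Phi_{k+1}-\Phi_k$ from above.

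However, the bookkeeping you actually write down does not give the stated constant, so your claim that it yields ``a total of $(\fconstant\dc+B\beta_N^{-1})/\gamma_{N-1}$'' is an arithmetic error. Bounding every $r_k$ by $\fconstant\dc+B\beta_N^{-1}$ already uses up $(\fconstant\dc+B\beta_N^{-1})/\gamma_{N-1}$ in the Abel sum. Adding your separate estimate $B(\beta_N^{-1}-\beta_0^{-1})/\gamma_{N-1}$ for the $\beta$-variation sum then gives $(\fconstant\dc+2B\beta_N^{-1}-B\beta_0^{-1})/\gamma_{N-1}$, even with $r_N=0$. With an arbitrary $y$, the term $-r_N/\gamma_{N-1}$ contributes a further $(\fconstant\dc+B\beta_N^{-1})/\gamma_{N-1}$. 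The fallback of ``accepting a constant factor'' does not prove the lemma as stated.

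The missing step is to keep the $k$-dependent bound $r_k\le\fconstant\dc+B\beta_k^{-1}$ inside the Abel sum. This is allowed because the weights $1/\gamma_k-1/\gamma_{k-1}$ are nonnegative. In the variation sum, use only $\tfrac12B_\prox^2\le B$, with no further crude estimate. The $\beta$-parts of the two sums then telescope exactly:
\[
\frac{\beta_0^{-1}}{\gamma_0}+\sum_{k=1}^{N-1}\Bigl(\frac{1}{\gamma_k}-\frac{1}{\gamma_{k-1}}\Bigr)\beta_k^{-1}+\sum_{k=0}^{N-1}\frac{\beta_{k+1}^{-1}-\beta_k^{-1}}{\gamma_k}=\frac{\beta_N^{-1}}{\gamma_{N-1}}.
\]
This is summation by parts applied to the last sum. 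Combine it with $r_N\ge0$, guaranteed by either $y=x_N$ or the paper's normalization by $\min_{\C}\Phi_k$. The $\fconstant\dc$-terms telescope to $\fconstant\dc/\gamma_{N-1}$, and the result is exactly $(\fconstant\dc+B\beta_N^{-1})/\gamma_{N-1}$. Your phrase about ``absorbing the $\beta$-variation into the $r_k$ bounds via the monotone telescoping'' points in this direction, but the proposal never carries it out, and the computation it does carry out fails.
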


\begin{proof}
For each $k\in\N$, define $\Phi_k^\star:=\min_{x\in\C}\Phi_k(x)$ and $r_k:=\Phi_k(x_k)-\Phi_k^\star$.
By \lemref{lem:one_step_smoothed_descent}, for every $k\in\N$,
\begin{equation*}
    \gamma_k\gap^{\beta_k}(x_k)
    \leq
    \Phi_k(x_k)-\Phi_{k+1}(x_{k+1})
    +
    \frac{1}{2}\para{\beta_{k+1}^{-1}-\beta_k^{-1}}B_\prox^2
    +
    \frac{L_k\gamma_k^2}{2}\dc^2.
\end{equation*}
Since $\beta_{k+1}\leq\beta_k$, the Moreau envelopes are pointwise nondecreasing in $k$, and hence $\forall x\in\R^n$ $\Phi_k(x)\leq \Phi_{k+1}(x)$. Therefore $\Phi_k^\star\leq\Phi_{k+1}^\star$, and so $\Phi_k(x_k)-\Phi_{k+1}(x_{k+1}) \leq r_k-r_{k+1}$.
Thus,
\begin{equation*}
    \gamma_k\gap^{\beta_k}(x_k)
    \leq
    r_k-r_{k+1}
    +
    \frac{1}{2}\para{\beta_{k+1}^{-1}-\beta_k^{-1}}B_\prox^2
    +
    \frac{L_k\gamma_k^2}{2}\dc^2.
\end{equation*}
Dividing by $\gamma_k$ and summing from $k=0$ to $N-1$ gives
\begin{equation*}
    \begin{aligned}
    \sum_{k=0}^{N-1}\gap^{\beta_k}(x_k)
    &\leq
    \sum_{k=0}^{N-1}\frac{r_k-r_{k+1}}{\gamma_k}
    +
    \frac{1}{2}B_\prox^2\sum_{k=0}^{N-1}
    \frac{\beta_{k+1}^{-1}-\beta_k^{-1}}{\gamma_k}
    +
    \frac{\dc^2}{2}\sum_{k=0}^{N-1}\gamma_kL_k.
    \end{aligned}
\end{equation*}

We now bound the first two sums. By summation by parts,
\begin{equation*}
    \sum_{k=0}^{N-1}\frac{r_k-r_{k+1}}{\gamma_k}
    =
    \frac{r_0}{\gamma_0}
    +
    \sum_{k=1}^{N-1}
    \para{\frac{1}{\gamma_k}-\frac{1}{\gamma_{k-1}}}r_k
    -
    \frac{r_N}{\gamma_{N-1}}.
\end{equation*}
Since $r_N\geq0$ and $\seq{\gamma_k}$ is nonincreasing, we get
\begin{equation*}
    \sum_{k=0}^{N-1}\frac{r_k-r_{k+1}}{\gamma_k}
    \leq
    \frac{r_0}{\gamma_0}
    +
    \sum_{k=1}^{N-1}
    \para{\frac{1}{\gamma_k}-\frac{1}{\gamma_{k-1}}}r_k.
\end{equation*}
By \lemref{lem:lipschitzPhi}, for every $k\in\N$, $r_k = \Phi_k(x_k)-\Phi_k^\star \leq \fconstant\dc+\beta_k^{-1}B_\prox\dtc$.
Using 
$
    B:=\max\{\frac{1}{2}B_\prox^2,\ B_\prox\dtc\},
$
we obtain
\begin{equation*}
    \begin{aligned}
    \sum_{k=0}^{N-1}\gap^{\beta_k}(x_k)
    &\leq
    \frac{\fconstant\dc+B\beta_0^{-1}}{\gamma_0}
    +
    \sum_{k=1}^{N-1}
    \para{\frac{1}{\gamma_k}-\frac{1}{\gamma_{k-1}}}
    \para{\fconstant\dc+B\beta_k^{-1}}\\
    &\quad+
    B\sum_{k=0}^{N-1}
    \frac{\beta_{k+1}^{-1}-\beta_k^{-1}}{\gamma_k}
    +
    \frac{\dc^2}{2}\sum_{k=0}^{N-1}\gamma_kL_k.
    \end{aligned}
\end{equation*}
The terms involving $\fconstant\dc$ telescope:
\begin{equation*}
    \frac{\fconstant\dc}{\gamma_0}
    +
    \sum_{k=1}^{N-1}
    \para{\frac{1}{\gamma_k}-\frac{1}{\gamma_{k-1}}}\fconstant\dc
    =
    \frac{\fconstant\dc}{\gamma_{N-1}}.
\end{equation*}
The terms involving $B$ also telescope, giving
\begin{equation*}
    \begin{aligned}
    &\frac{B\beta_0^{-1}}{\gamma_0}
    +
    B\sum_{k=1}^{N-1}
    \para{\frac{1}{\gamma_k}-\frac{1}{\gamma_{k-1}}}\beta_k^{-1}
    +
    B\sum_{k=0}^{N-1}
    \frac{\beta_{k+1}^{-1}-\beta_k^{-1}}{\gamma_k}=
    \frac{B\beta_N^{-1}}{\gamma_{N-1}}.
    \end{aligned}
\end{equation*}
Combining these estimates yields
\begin{equation}
\label{e:gap-ubound}
    \sum_{k=0}^{N-1}\gap^{\beta_k}(x_k)
    \leq
    \frac{\fconstant\dc+B\beta_N^{-1}}{\gamma_{N-1}}
    +
    \frac{\dc^2}{2}\sum_{k=0}^{N-1}\gamma_kL_k.
\end{equation}
Since $\beta_k\leq\beta_0$, we have  $\frac{\rho}{1-\rho\beta_k}
    \leq
    \frac{\rho\beta_0}{1-\rho\beta_0}\beta_k^{-1}$,
and therefore
$
    \max\left\{\beta_k^{-1},\frac{\rho}{1-\rho\beta_k}\right\}
    \leq
    M_{\rho,\beta_0}\beta_k^{-1}.
$ 
Consequently,
$
    L_k
    \leq
    \gradfconstant
    +
    \norm{T}{\op}^2M_{\rho,\beta_0}\beta_k^{-1};
$ with \eqref{e:gap-ubound}, this shows the upper bound.
The lower bound follows from nonnegativity of the smoothed gap and the fact that $x_k\in \C$.
\end{proof}

\subsection{Convergence Rate for the Smoothed Gaps}

We now specialize the energy estimate to specific open-loop schedules in order to derive rates.

\begin{theorem}[smoothed gap rates for power schedules]
\label{thm:rates}
Fix $p,q\in\left]0,1\right[$ satisfying $q<p$ and $p+q<1$ and set $\gamma_k=(k+1)^{-p}$ and $\beta_k=\beta_0(k+1)^{-q}$ with $\beta_0\in\left]0,\rho^{-1}\right[$. Let Assumptions~\ref{ass:f}, \ref{ass:T}, \ref{ass:C}, and \ref{ass:g} hold and let $\seq{x_k}$ be generated by \FRAMES. Then, there exists some $C_{p,q}\geq 0$, defined in \eqref{eq:Cpq_constant}, such that, for every $N\in\N^*$,
\begin{equation}
\label{eq:pq_smoothed_average_rate}
    0
    \leq
    \frac{1}{N}\sum_{k=0}^{N-1}\gap^{\beta_k}(x_k)
    \leq
    C_{p,q}N^{-\min\{p-q,\ 1-p-q\}}.
\end{equation}
Consequently,
\begin{equation}
\label{eq:pq_smoothed_min_rate}
    0
    \leq
    \min_{0\leq k\leq N-1}\gap^{\beta_k}(x_k)
    \leq
    C_{p,q}N^{-\min\{p-q,\ 1-p-q\}}.
\end{equation}
In particular, for $p=1/2$ and $q=1/4$ one has
$\min\{p-q,\ 1-p-q\}=1/4$ and a constant $C\geq0$, defined in \eqref{eq:C_constant}, independent of $N$ such that
\begin{equation*}
    0
    \leq
    \frac{1}{N}\sum_{k=0}^{N-1}\gap^{\beta_k}(x_k)
    \leq
    CN^{-1/4}\qquad\text{and}
    \qquad
    0
    \leq
    \min_{0\leq k\leq N-1}\gap^{\beta_k}(x_k)
    \leq
    CN^{-1/4}.
\end{equation*}
\end{theorem}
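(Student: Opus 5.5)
The proof will plug the power schedules into \lemref{lem:energy} and bound each term. First, check that \assref{ass:gamma} and \assref{ass:beta} hold. The sequence $\gamma_k=(k+1)^{-p}\in\left]0,1\right]$ is nonincreasing and tends to $0$. The sequence $\beta_k=\beta_0(k+1)^{-q}$ is nonincreasing, tends to $0$, and lies in $\left]0,\rho^{-1}\right[$ because $\beta_0<\rho^{-1}$. So \lemref{lem:energy} applies, and for every $N\in\N^*$,
\begin{equation*}
\sum_{k=0}^{N-1}\gap^{\beta_k}(x_k)\leq \frac{\fconstant\dc+B\beta_N^{-1}}{\gamma_{N-1}}+\frac{\dc^2}{2}\sum_{k=0}^{N-1}\gamma_k\para{\gradfconstant+\norm{T}{\op}^2M_{\rho,\beta_0}\beta_k^{-1}}.
\end{equation*}

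Next, bound each piece. For the boundary term, $\gamma_{N-1}^{-1}=N^{p}$ and $\beta_N^{-1}=\beta_0^{-1}(N+1)^q\leq\beta_0^{-1}2^qN^q$. The first term is therefore at most $\fconstant\dc N^p+2^qB\beta_0^{-1}N^{p+q}$. For the sums, compare with integrals, using that $t\mapsto t^{-a}$ is decreasing for $a>0$:
\begin{equation*}
\sum_{k=0}^{N-1}(k+1)^{-a}\leq \frac{N^{1-a}}{1-a}\qquad\text{for }a\in\left]0,1\right[.
\end{equation*}
Taking $a=p$ gives $\sum_k\gamma_k\leq N^{1-p}/(1-p)$. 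Taking $a=p-q\in\left]0,1\right[$ gives $\sum_k\gamma_k\beta_k^{-1}=\beta_0^{-1}\sum_k(k+1)^{-(p-q)}\leq \beta_0^{-1}N^{1-p+q}/(1-p+q)$.

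Divide everything by $N$. The resulting exponents are $p-1$, $p+q-1$, $-p$ and $q-p$. Since $q>0$, we have $p-1\leq p+q-1$ and $-p\leq q-p$. Every term is therefore at most a constant times $N^{-\min\{1-p-q,\,p-q\}}$, using $N\geq1$. Both exponents are positive by the hypotheses $q<p$ and $p+q<1$. Collecting the coefficients defines
\begin{equation*}
C_{p,q}:=\fconstant\dc+2^qB\beta_0^{-1}+\frac{\dc^2}{2}\para{\frac{\gradfconstant}{1-p}+\frac{\norm{T}{\op}^2M_{\rho,\beta_0}}{\beta_0(1-p+q)}},
\end{equation*}
which gives \eqref{eq:pq_smoothed_average_rate}. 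The lower bound $0$ comes from \lemref{lem:energy}.

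The bound \eqref{eq:pq_smoothed_min_rate} then follows because a minimum is at most an average. For $p=1/2$ and $q=1/4$, both $p-q$ and $1-p-q$ equal $1/4$, and we set $C:=C_{1/2,1/4}$.

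There is no real obstacle; the only care needed is bookkeeping. One point is bounding $\beta_N^{-1}$, which involves $(N+1)^q$ rather than $N^q$; this is handled by $(N+1)^q\leq 2^qN^q$. The other is checking that each exponent is dominated by the stated minimum, which uses $q>0$ so that the $N^{p-1}$ and $N^{-p}$ terms are absorbed.
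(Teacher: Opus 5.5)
Your proposal is correct and takes essentially the same route as the paper: invoke Lemma~\ref{lem:energy}, bound $(N+1)^q\le 2^qN^q$, control the two power sums by the integral test, and absorb the $N^{p-1}$ and $N^{-p}$ terms into $N^{-(1-p-q)}$ and $N^{-(p-q)}$ using $N\ge 1$. You also obtain the same constant $C_{p,q}$ as in \eqref{eq:Cpq_constant}, and $C_{1/2,1/4}$ coincides with the paper's $C$ in \eqref{eq:C_constant}.
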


\begin{proof}
The chosen schedules satisfy Assumptions~\ref{ass:gamma} and~\ref{ass:beta}. By \lemref{lem:energy}, we have
\begin{equation}
\label{eq:pq_energy_start}
 0\leq   \sum_{k=0}^{N-1}\gap^{\beta_k}(x_k)
    \leq
    \frac{\fconstant\dc+B\beta_N^{-1}}{\gamma_{N-1}}
    +
    \frac{\dc^2}{2}
    \sum_{k=0}^{N-1}
    \gamma_k
    \para{
        \gradfconstant
        +
        \norm{T}{\op}^2M_{\rho,\beta_0}\beta_k^{-1}
    },
\end{equation}
where $B:=\max\{\frac{1}{2}B_\prox^2,\ B_\prox\dtc\}$.
For the power schedules, $\gamma_{N-1}^{-1}=N^p$ and $\beta_N^{-1} = \beta_0^{-1}(N+1)^q$. Since $N\geq1$ and $q\in\left]0,1\right[$, we have $(N+1)^q\leq 2^qN^q$
and $N^{p-1}\leq N^{p+q-1}$.
Therefore
\begin{equation}
\label{eq:pq_range_bound}
    \begin{aligned}
    \frac{\fconstant\dc+B\beta_N^{-1}}{N\gamma_{N-1}}
    &=
    \fconstant\dc N^{p-1}
    +
    B\beta_0^{-1}N^{p-1}(N+1)^q                                      \\
    &\leq
    \fconstant\dc N^{p+q-1}
    +
    2^qB\beta_0^{-1}N^{p+q-1}                                           \\
    &=
    \para{\fconstant\dc+2^qB\beta_0^{-1}}N^{-(1-p-q)}.
    \end{aligned}
\end{equation}
It remains to consider the second term in the right hand side of \eqref{eq:pq_energy_start}. Since $\gamma_k=(k+1)^{-p}$ and 
$\beta_k^{-1}=\beta_0^{-1}(k+1)^q$ we obtain that $\frac{1}{N}
    \sum_{k=0}^{N-1}
    \gamma_k
(
        \gradfconstant
        +
        \norm{T}{\op}^2M_{\rho,\beta_0}\beta_k^{-1}
  )$ is equal to
\begin{equation}
\label{eq:pq_curvature_start}
    \frac{\gradfconstant}{N}
    \sum_{k=0}^{N-1}(k+1)^{-p}
    +
    \frac{\norm{T}{\op}^2M_{\rho,\beta_0}\beta_0^{-1}}{N}
    \sum_{k=0}^{N-1}(k+1)^{-(p-q)}.
\end{equation}
Since $p\in\left]0,1\right[$ and $p-q\in\left]0,1\right[$, the integral test gives
\begin{equation}
\label{eq:pq_power_sum_p}
    \sum_{k=0}^{N-1}(k+1)^{-p}
    =
    \sum_{j=1}^{N}j^{-p}
    \leq
    \frac{N^{1-p}}{1-p}
    \;\text{and}\;
     \sum_{k=0}^{N-1}(k+1)^{-(p-q)}
    =
    \sum_{j=1}^{N}j^{-(p-q)}
    \leq
    \frac{N^{1-p+q}}{1-p+q}.
\end{equation}
Combining the expression \eqref{eq:pq_curvature_start} with \eqref{eq:pq_power_sum_p} and the fact $N^{-p}\leq N^{-(p-q)}$ yields
\begin{equation}
\label{eq:pq_curvature_bound}
    \begin{aligned}
        \frac{1}{N}
    \sum_{k=0}^{N-1}
    \gamma_k
    \para{
        \gradfconstant
        +
        \norm{T}{\op}^2M_{\rho,\beta_0}\beta_k^{-1}
    }
    &\leq
        \frac{\gradfconstant}{1-p}N^{-p}
        +
        \frac{\norm{T}{\op}^2M_{\rho,\beta_0}\beta_0^{-1}}{1-p+q}
        N^{-(p-q)}
\\
    &\leq
   \left(
        \frac{\gradfconstant}{1-p}
        +
        \frac{\norm{T}{\op}^2M_{\rho,\beta_0}\beta_0^{-1}}{1-p+q}
   \right)
    N^{-(p-q)}.
    \end{aligned}
\end{equation}

Dividing \eqref{eq:pq_energy_start} by $N$ and using
\eqref{eq:pq_range_bound} and \eqref{eq:pq_curvature_bound}, we get
\begin{equation*}
\begin{split}
   0\leq \frac{1}{N}
    \sum_{k=0}^{N-1}\gap^{\beta_k}(x_k)
    \leq
    \para{\fconstant\dc+2^qB\beta_0^{-1}}N^{-(1-p-q)}
    +\\
    \frac{\dc^2}{2}
    \left(
        \frac{\gradfconstant}{1-p}
        +
        \frac{\norm{T}{\op}^2M_{\rho,\beta_0}\beta_0^{-1}}{1-p+q}
    \right)
    N^{-(p-q)}.
    \end{split}
\end{equation*}
Hence \eqref{eq:pq_smoothed_average_rate} holds,
where $B:=\max\{\frac{1}{2}B_\prox^2,\ B_\prox\dtc\}$, $M_{\rho,\beta_0}=\max\{1,\ \frac{\rho\beta_0}{1-\rho\beta_0}\}$, and
\begin{equation}
    \label{eq:Cpq_constant}
    C_{p,q}:= \fconstant\dc + 2^qB\beta_0^{-1} + \frac{\dc^2}{2}
    \left(
        \frac{\gradfconstant}{1-p}
        +
        \frac{\norm{T}{\op}^2M_{\rho,\beta_0}\beta_0^{-1}}{1-p+q}
    \right).
\end{equation}
The best-gap estimate \eqref{eq:pq_smoothed_min_rate} follows from nonnegativity of the smoothed gaps
\begin{equation*}
   0\leq \min_{0\leq k\leq N-1}\gap^{\beta_k}(x_k)
    \leq
    \frac{1}{N}
    \sum_{k=0}^{N-1}\gap^{\beta_k}(x_k).
\end{equation*}
Finally, substituting $p=1/2$ and $q=1/4$ gives the claimed $N^{-1/4}$ bounds with 
\begin{equation}
    \label{eq:C_constant}
    C := \fconstant\dc
    +
    2^{1/4}B\beta_0^{-1}
    +
    \gradfconstant\dc^2
    +
    \frac{2\dc^2}{3}\norm{T}{\op}^2M_{\rho,\beta_0}\beta_0^{-1}.
\end{equation}
\end{proof}

\begin{remark}[Smoothed gaps versus nonsmooth certificates]
\label{rmk:smoothed_gap_schedule_choice}
For fixed $q$, the smoothed gap exponent in \thmref{thm:rates} is maximized by balancing $p-q=1-p-q$, which gives $p=1/2$ and the smoothed gap exponent $1/2-q$. Thus, if one only tries to optimize the smoothed gaps, it is tempting to choose $q$ very small, obtaining a smoothed gap rate close to $N^{-1/2}$. This perspective is natural in smoothing-based analyses, as was done in \cite{woodstock2025splitting,halbey2025efficient}.

However, the smoothed gap does not directly certify stationarity for \eqref{P}. In order to transfer the convergence rate of the smoothed gaps to determine a convergence rate for the nonsmooth gaps (see \secref{sec:gaps}), it turns out the nonsmooth convergence rate also depends on the smoothing error $\beta_k=\mathcal{O}(k^{-q})$, as shown later in \secref{sec:gap_transfer}. Hence a schedule that is favorable for the smoothed gaps alone can smooth too slowly for the final nonsmooth stationarity certificate. Later, Remark~\ref{rmk:optimal_power_balance_nonsmooth} demonstrates $p=1/2$ and $q=1/4$ to be optimal.
\end{remark}

\begin{corollary}[Last-half best iterate]
\label{cor:last_half_best_iterate}
Under the same assumptions as \thmref{thm:rates}, let $N\geq2$, $p=1/2$, $q=1/4$, and choose
\begin{equation*}
    \kmin\in
    \argmin_{\lfloor N/2\rfloor\leq k\leq N-1}
    \gap^{\beta_k}(x_k).
\end{equation*}
Then,
\begin{equation*}
    \gap^{\beta_{\kmin}}(x_{\kmin})
    \leq
    2CN^{-1/4}\quad\mbox{and}\quad
    \beta_{\kmin}
    \leq
    2^{1/4}\beta_0N^{-1/4}.
\end{equation*}
\end{corollary}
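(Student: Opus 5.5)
The plan is to apply \thmref{thm:rates} on the window $\{\lfloor N/2\rfloor,\dots,N-1\}$ rather than on $\{0,\dots,N-1\}$, then bound $\beta_{\kmin}$ using monotonicity of the schedule.

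\textbf{Gap bound.} The minimum over the window is at most the average over the window. The window contains $N-\lfloor N/2\rfloor=\lceil N/2\rceil\geq N/2$ indices, and the smoothed gaps are nonnegative, so the sum over the window is at most the full sum:
\begin{equation*}
    \gap^{\beta_{\kmin}}(x_{\kmin})
    \leq
    \frac{1}{\lceil N/2\rceil}\sum_{k=\lfloor N/2\rfloor}^{N-1}\gap^{\beta_k}(x_k)
    \leq
    \frac{2}{N}\sum_{k=0}^{N-1}\gap^{\beta_k}(x_k).
\end{equation*}
Applying \eqref{eq:pq_smoothed_average_rate} with $p=1/2$ and $q=1/4$ bounds the right-hand side by $2CN^{-1/4}$, which is the first claim.

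\textbf{Smoothing bound.} Since $\kmin\geq\lfloor N/2\rfloor$ and $\beta_k=\beta_0(k+1)^{-1/4}$ is nonincreasing, we have $\beta_{\kmin}\leq\beta_0(\lfloor N/2\rfloor+1)^{-1/4}$. It remains to check that $\lfloor N/2\rfloor+1\geq N/2$. This holds because $\lfloor N/2\rfloor\geq (N-1)/2$, so $\lfloor N/2\rfloor+1\geq (N+1)/2> N/2$. Therefore $\beta_{\kmin}\leq\beta_0(N/2)^{-1/4}=2^{1/4}\beta_0N^{-1/4}$.

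No step is a real obstacle. The only care needed is the counting: the window has at least $N/2$ elements, and nonnegativity of the gaps (already established in \thmref{thm:rates}) justifies dropping the first $\lfloor N/2\rfloor$ terms. The hypothesis $N\geq2$ only guarantees that the window is a sensible subset of the later iterates; it is not needed for the inequalities.
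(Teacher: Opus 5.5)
Your proposal is correct and follows the paper's proof essentially step for step. You bound the window minimum by twice the full average using nonnegativity of the gaps and the window size $\lceil N/2\rceil\geq N/2$, then bound $\beta_{\kmin}$ by monotonicity of $\beta_k$ together with $\lfloor N/2\rfloor+1\geq N/2$; you also spell out that last floor inequality, which the paper leaves implicit.
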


\begin{proof}
Since the set $\{\lfloor N/2\rfloor,\ldots,N-1\}$ has cardinality at least $N/2$, we have
\begin{equation*}
    \begin{aligned}
    \gap^{\beta_{\kmin}}(x_{\kmin})
    &\leq
    \frac{2}{N}
    \sum_{k=\lfloor N/2\rfloor}^{N-1}
    \gap^{\beta_k}(x_k)\leq
    \frac{2}{N}
    \sum_{k=0}^{N-1}
    \gap^{\beta_k}(x_k)
    \leq
    2CN^{-1/4}.
    \end{aligned}
\end{equation*}
Moreover, since $\kmin\geq\lfloor N/2\rfloor$,
\begin{equation*}
    \beta_{\kmin}
    =
    \beta_0(\kmin+1)^{-1/4}
    \leq
    \beta_0\para{\lfloor N/2\rfloor+1}^{-1/4}
    \leq
    2^{1/4}\beta_0N^{-1/4}.
\end{equation*}
\end{proof}

\subsection{Subsequential Convergence to Stationary Points of \texorpdfstring{\eqref{P}}{(P)}}
\begin{corollary}[Smoothed gap-vanishing subsequences]
\label{cor:gap_vanishing_subsequence}
Under the assumptions of \thmref{thm:rates}, there exists an increasing sequence $\seqj{k_j}\subset\N$ such that
\begin{equation*}
    x_{k_j}\to\xbar\in\C\quad\text{and}\quad\gap^{\beta_{k_j}}(x_{k_j})\to0.
\end{equation*}
\end{corollary}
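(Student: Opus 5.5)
We will combine the rate from \thmref{thm:rates} with compactness of $\C$. First, we build a subsequence along which the smoothed gaps vanish. Under the assumptions of \thmref{thm:rates}, the exponent $\min\{p-q,\,1-p-q\}$ is strictly positive. Hence \eqref{eq:pq_smoothed_min_rate} gives $\min_{0\leq k\leq N-1}\gap^{\beta_k}(x_k)\to0$ as $N\to\infty$. This alone does not produce an increasing sequence of indices, since the minimizer could in principle stay at a fixed small index. To handle this, we use a tail version of the average bound, in the spirit of Corollary~\ref{cor:last_half_best_iterate}. For $N\geq2$, the window $\{\lfloor N/2\rfloor,\dots,N-1\}$ has at least $N/2$ elements. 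Choosing $m_N$ in that window to minimize $\gap^{\beta_k}(x_k)$, we get
\begin{equation*}
0\leq\gap^{\beta_{m_N}}(x_{m_N})\leq \frac{2}{N}\sum_{k=0}^{N-1}\gap^{\beta_k}(x_k)\leq 2C_{p,q}N^{-\min\{p-q,\,1-p-q\}}.
\end{equation*}
Since $m_N\geq\lfloor N/2\rfloor\to\infty$, we can take $N$ along the sequence $N_i=4^i$. Then $m_{N_i}\geq 2\cdot4^{i-1}>4^{i-1}-1\geq m_{N_{i-1}}$, so the indices $m_{N_i}$ are strictly increasing. Along them the smoothed gaps tend to zero.

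Second, we extract a convergent subsequence. All iterates lie in $\C$, because $x_0\in\C$, $\gamma_k\in\left]0,1\right]$, and $\C$ is convex. By \assref{ass:C}, $\C$ is compact. So Bolzano--Weierstrass gives a further subsequence $(k_j)_{j\in\N}$ of $(m_{N_i})_{i\in\N}$ with $x_{k_j}\to\xbar$, and closedness of $\C$ gives $\xbar\in\C$. A subsequence of a null sequence is null, so $\gap^{\beta_{k_j}}(x_{k_j})\to0$ still holds. The indices $k_j$ remain increasing.

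The only delicate step is the first: guaranteeing that the vanishing-gap indices actually diverge, rather than relying on a minimum over the prefix $\{0,\dots,N-1\}$. The last-half window argument settles this. An alternative is to note that if only finitely many $k$ had $\gap^{\beta_k}(x_k)<\varepsilon$, the average would be bounded below by roughly $\varepsilon$, contradicting \eqref{eq:pq_smoothed_average_rate}. Either way, $\liminf_k\gap^{\beta_k}(x_k)=0$, and compactness finishes the proof.
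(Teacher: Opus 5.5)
Your proof is correct and follows the paper's argument: the average bound of \thmref{thm:rates}, together with nonnegativity of the smoothed gaps, gives vanishing gaps along a divergent index sequence, and compactness of $\C$ then extracts a convergent subsequence. The paper gets the divergent indices directly from $\liminf_{k}\gap^{\beta_k}(x_k)=0$, which is your stated alternative. Your last-half window with $N_i=4^i$ makes that extraction explicit and also gives a rate along the subsequence.
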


\begin{proof}
Since the smoothed gaps are nonnegative and their average converges to zero by \thmref{thm:rates}, we have
\begin{equation*}
    \liminf_{k\to+\infty}\gap^{\beta_k}(x_k)=0.
\end{equation*}
Thus, there exists an increasing sequence $\seqj{k_j}$ such that
\begin{equation*}
    \gap^{\beta_{k_j}}(x_{k_j})\to0.
\end{equation*}
The compactness of $\C$ and the fact that $x_k\in\C$ for every $k$ imply that $\seqj{x_{k_j}}$ has a convergent subsequence; picking this subsequence and relabeling indices gives the final result.
\end{proof}

\thmref{thm:rates} shows that \FRAMES\ controls the computable smoothed certificates $\gap^{\beta_k}(x_k)$ with few assumptions. The convergence results obtained are in fact enough to ensure stationarity for \eqref{P} asymptotically, which we show next, starting with the indicator case.

\begin{theorem}[Indicator subsequential stationarity]
\label{thm:indicator_subsequential_stationarity}
Suppose \assref{ass:f}, \ref{ass:T}, \ref{ass:C}, and \ref{ass:g}\ref{ass:g=indicator}\ref{ass:g=indicator_nonempty} hold and suppose that $\seqj{u_j}\subset\C$ with $\seqj{\alpha_j}\subset]0,+\infty[$ define a convergent smoothed gap-vanishing sequence, i.e.,
\begin{equation*}
    \alpha_j\to 0,\qquad u_{j}\to\ubar,
    \qquad\text{and}\qquad
    \gap^{\alpha_{j}}(u_{j})\to0.
\end{equation*}
Then $\ubar\in\widetilde{\C}$ and $\widetilde{\gap}(\ubar)=0$, using the definition of signed gap given in \defref{def:indicatorGap}.
Equivalently, $0\in\nabla f(\ubar)+N_{\widetilde{\C}}(\ubar)$.
Under \assref{ass:g}\ref{ass:g=indicator}\ref{ass:g=indicator_nonempty_ri}, this can be rewritten as
\begin{equation*}
    0\in\nabla f(\ubar)+N_{\C}(\ubar)+T^*N_{\D}(T\ubar).
\end{equation*}
\end{theorem}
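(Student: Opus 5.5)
The plan is to run everything through \lemref{lem:indicator_gap_transfer}, which gives, for every $j$,
\begin{equation*}
    \widetilde{\gap}(u_j)+\frac{1}{\alpha_j}\dist_{\D}^2(Tu_j)\leq \gap^{\alpha_j}(u_j).
\end{equation*}
The argument then has two parts: first feasibility of $\ubar$, then stationarity over $\widetilde{\C}$.

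\emph{Feasibility.} By \assref{ass:g}\ref{ass:g=indicator}\ref{ass:g=indicator_nonempty}, $\widetilde{\C}$ is nonempty and compact. Since $u_j\in\C$, the signed gap is bounded below uniformly:
\begin{equation*}
    \widetilde{\gap}(u_j)\geq \langle \nabla f(u_j),u_j-s\rangle\geq -\fconstant\dc
\end{equation*}
for any fixed $s\in\widetilde{\C}\subset\C$. Substituting this into the transfer inequality yields
\begin{equation*}
    \dist_{\D}^2(Tu_j)\leq \alpha_j\bigl(\gap^{\alpha_j}(u_j)+\fconstant\dc\bigr).
\end{equation*}
The right-hand side tends to $0$ because $\alpha_j\to0$ and $\gap^{\alpha_j}(u_j)\to0$ (so the bracket is bounded). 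The functions $\dist_{\D}$ and $T$ are continuous, so $\dist_{\D}(T\ubar)=0$, and $T\ubar\in\D$ since $\D$ is closed. Also $\ubar\in\C$ because $\C$ is closed, so $\ubar\in\widetilde{\C}$.

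\emph{Stationarity.} Discarding the nonnegative distance term in the transfer inequality gives $\widetilde{\gap}(u_j)\leq\gap^{\alpha_j}(u_j)$, hence $\limsup_j\widetilde{\gap}(u_j)\leq0$. Next we show that $\widetilde{\gap}$ is continuous on $\C$. It is a maximum over the fixed compact set $\widetilde{\C}$ of the function $(x,s)\mapsto\langle\nabla f(x),x-s\rangle$, which is jointly continuous since $\nabla f$ is continuous near $\C$. A standard Berge/Danskin argument, or directly the estimate $|\widetilde{\gap}(x)-\widetilde{\gap}(y)|\leq\sup_{s\in\widetilde{\C}}|\langle\nabla f(x),x-s\rangle-\langle\nabla f(y),y-s\rangle|$, then gives continuity. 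Therefore $\widetilde{\gap}(\ubar)=\lim_j\widetilde{\gap}(u_j)\leq 0$. Since $\ubar\in\widetilde{\C}$, taking $s=\ubar$ in the maximum gives $\widetilde{\gap}(\ubar)\geq0$, so $\widetilde{\gap}(\ubar)=0$.

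The equivalent inclusions then follow from \lemref{lem:indicator_gap_certificate}: first $0\in\nabla f(\ubar)+N_{\widetilde{\C}}(\ubar)$, and under \assref{ass:g}\ref{ass:g=indicator}\ref{ass:g=indicator_nonempty_ri} the decomposed form with $N_{\C}(\ubar)+T^*N_{\D}(T\ubar)$.

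The only delicate point is the feasibility step. The smoothed gap alone does not control $\dist_{\D}(Tu_j)$ unless the possibly negative signed gap is bounded below uniformly. That uniform lower bound, $-\fconstant\dc$, comes from nonemptiness of $\widetilde{\C}$ and compactness of $\C$, which is exactly where the consistency assumption enters.
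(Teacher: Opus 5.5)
Your proposal is correct and follows the paper's proof essentially step for step. Both arguments use \lemref{lem:indicator_gap_transfer} together with the uniform lower bound $\widetilde{\gap}(u_j)\geq-\fconstant\dc$ to get feasibility, then $\limsup_j\widetilde{\gap}(u_j)\leq0$, continuity of $\widetilde{\gap}$, and $\widetilde{\gap}(\ubar)\geq0$ from $\ubar\in\widetilde{\C}$, and finish with \lemref{lem:indicator_gap_certificate}; your explicit justification of the continuity of $\widetilde{\gap}$ only spells out what the paper asserts in one line.
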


\begin{proof}
Since $u_{j}\in\C$ for every $j\in\N^*$ and $\C$ is compact, the limit satisfies $\ubar\in\C$.

We first prove feasibility. By \lemref{lem:indicator_gap_transfer},
\begin{equation*}
    \widetilde{\gap}(u_{j})
    +
    \alpha_{j}^{-1}\dist_{\D}^2(Tu_{j})
    \leq
    \gap^{\alpha_{j}}(u_{j}).
\end{equation*}
For every $u\in\C$ and every $s\in\widetilde{\C}$,
$
   \widetilde{\gap}(u)\geq \ip{\nabla f(u),u-s}{}
    \geq
    -\fconstant\dc.
$ and therefore
\begin{equation*}
    \alpha_{j}^{-1}\dist_{\D}^2(Tu_{j})
    \leq
    \gap^{\alpha_{j}}(u_{j})
    +
    \fconstant\dc.
\end{equation*}
Since $\alpha_{j}\to0$ and $\gap^{\alpha_{j}}(u_{j})\to0$, it follows that $\dist_{\D}(Tu_{j})\to0$.
By continuity of $u\mapsto\dist_{\D}(Tu)$, we get $\dist_{\D}(T\ubar)=0$. Thus $T\ubar\in\D$, and so $\ubar\in\widetilde{\C}$.

We now prove stationarity over $\widetilde{\C}$. From \lemref{lem:indicator_gap_transfer}, $\widetilde{\gap}(u_{j}) \leq \gap^{\alpha_{j}}(u_{j})$.
Taking the limit superior gives $\limsup_{j\to+\infty}\widetilde{\gap}(u_{j})\leq0$.
Since $\widetilde{\C}$ is compact and $\nabla f$ is continuous, the function $\widetilde{\gap}$ is continuous. Hence $\widetilde{\gap}(\ubar)\leq0$.
But $\ubar\in\widetilde{\C}$, so choosing $s=\ubar$ in the definition of $\widetilde{\gap}$ gives $\widetilde{\gap}(\ubar)\geq0$. Therefore $\widetilde{\gap}(\ubar)=0$. The equivalence with the normal cone stationarity condition follows from \lemref{lem:indicator_gap_certificate}.
\end{proof}

\begin{theorem}[Lipschitz weakly convex subsequential stationarity]
\label{thm:lipschitz_subsequential_stationarity}
Suppose \assref{ass:f}, \ref{ass:T}, \ref{ass:C}, and \ref{ass:g}\ref{ass:g=lipschitz} hold and suppose that $\seqj{u_j}\subset\C$ with $\seqj{\alpha_j}\subset]0,\rho^{-1}[$ define a convergent smoothed gap-vanishing sequence, i.e.,
\begin{equation*}
    \alpha_j\to0,
    \qquad u_{j}\to\ubar,
    \qquad\text{and}\qquad
    \gap^{\alpha_{j}}(u_{j})\to0.
\end{equation*}
Then, using the definition of subgradient gap in \defref{def:nonsmoothGap}, there exists $\bar{\xi}\in\partial g(T\ubar)$ such that $\gap(\ubar,\bar{\xi})=0$ and thus
\begin{equation*}
    0\in\nabla f(\ubar)+T^*\bar{\xi}+N_{\C}(\ubar),\quad\mbox{i.e.,}\quad 0\in\nabla f(\ubar)+T^*\partial g(T\ubar)+N_{\C}(\ubar).
\end{equation*}
\end{theorem}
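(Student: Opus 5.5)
The plan is to pass to the limit in the smoothed gap, using the Moreau-envelope subgradients $\xi_j:=\xi_{\alpha_j}(u_j)=\nabla g^{\alpha_j}(Tu_j)$ as approximate subgradients at $Tu_j$.

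\textbf{Step 1: extract a convergent subgradient sequence.} By \propref{prop:moreauProperties}, $\norm{\xi_j}{2}\leq L_g$ for all $j$. Hence, after passing to a subsequence (not relabeled), $\xi_j\to\bar\xi$ for some $\bar\xi\in\R^m$ with $\norm{\bar\xi}{2}\leq L_g$. Set $p_j:=\prox_{\alpha_j g}(Tu_j)$. By the proximal optimality condition recorded after \defref{def:nonsmoothGap}, $\xi_j\in\partial g(p_j)$. By \propref{prop:moreauDisplacementBound}, $\norm{Tu_j-p_j}{2}\leq\alpha_j L_g\to0$, and since $Tu_j\to T\ubar$, we get $p_j\to T\ubar$.

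\textbf{Step 2: show $\bar\xi\in\partial g(T\ubar)$.} This is the step I expect to need the most care. It requires outer semicontinuity (closedness of the graph) of the subdifferential of $g$. For a locally Lipschitz function, the graph of the Clarke subdifferential is closed \cite{clarke1990optimization}. One can also argue directly from weak convexity: $\xi_j\in\partial g(p_j)$ means
\[
g(y)\geq g(p_j)+\ip{\xi_j,y-p_j}{}-\tfrac{\rho}{2}\norm{y-p_j}{2}^2\quad\text{for all }y\in\R^m,
\]
because $g+\frac{\rho}{2}\norm{\cdot}{2}^2$ is convex and, by the regularity discussion preceding \propref{prop:lipschitz_stationarity_condition}, its subdifferential is $\partial g+\rho\,\Id$. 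Passing to the limit using continuity of $g$ gives the same inequality at $(T\ubar,\bar\xi)$, hence $\bar\xi\in\partial g(T\ubar)$.

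\textbf{Step 3: pass to the limit in the gap.} The smoothed gap is exactly
\[
\gap^{\alpha_j}(u_j)=\max_{s\in\C}\ip{\nabla f(u_j)+T^*\xi_j,u_j-s}{}.
\]
The map $(u,v)\mapsto\max_{s\in\C}\ip{v,u-s}{}$ is continuous, being a support-function-type expression over the compact set $\C$. Also $\nabla f(u_j)+T^*\xi_j\to\nabla f(\ubar)+T^*\bar\xi$, by continuity of $\nabla f$ on $\C$ (\assref{ass:f}). Therefore
\[
\gap(\ubar;\bar\xi)=\lim_{j}\gap^{\alpha_j}(u_j)=0.
\]
Since $\ubar\in\C$ by compactness and $\bar\xi\in\partial g(T\ubar)$, \lemref{lem:lipschitz_gap_certificate} yields $0\in\nabla f(\ubar)+T^*\bar\xi+N_{\C}(\ubar)$. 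Then \propref{prop:lipschitz_stationarity_condition} gives the final inclusion $0\in\nabla f(\ubar)+T^*\partial g(T\ubar)+N_{\C}(\ubar)$.
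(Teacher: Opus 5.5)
Your proposal is correct and follows essentially the same route as the paper: you bound $\xi_j$ by $L_g$, extract a convergent subsequence, use $\|Tu_j-p_j\|\le\alpha_j L_g$ together with outer semicontinuity of the Clarke subdifferential to get $\bar{\xi}\in\partial g(T\bar{u})$, and then pass to the limit in the smoothed gap. Two points differ only cosmetically. Your optional weak-convexity inequality makes the closed-graph step self-contained. You also take the limit of the maximum directly, via continuity of the support function of $\C$, whereas the paper takes it pointwise in $s$.
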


\begin{proof}
For each $j\in\N$, define $\xi_j:=\nabla g^{\alpha_{j}}(Tu_{j})$. By \propref{prop:moreauDisplacementBound},
\begin{equation}
\label{e:lip-grad0}
    \norm{Tu_{j}-\prox_{\alpha_{j}g}(Tu_{j})}{2}
    \leq
    \alpha_{j}L_g
    \to0.
\end{equation}
Since $u_{j}\to\ubar$ and $T$ is linear, this implies $\prox_{\alpha_{j}g}(Tu_{j})\to T\ubar$. Moreover, \propref{prop:moreauProperties} gives
\begin{equation*}
    \norm{\xi_j}{2}\leq L_g
    \qquad
    \forall j\in\N.
\end{equation*}
Hence, after passing to a subsequence if necessary, there exists $\bar{\xi}\in\R^m$ such that $\xi_j\to\bar{\xi}$. For every $j$, the proximal optimality condition gives
\begin{equation*}
    \xi_j\in\partial g(\prox_{\alpha_{j}g}(Tu_{j})).
\end{equation*}
Since \eqref{e:lip-grad0} yields $\prox_{\alpha_{j}g}(Tu_{j})\to T\ubar$, $\xi_j\to\bar{\xi}$, and since the Clarke subdifferential is outer semicontinuous for locally Lipschitz functions \cite[Proposition~2.1.5]{clarke1990optimization}, we obtain $\bar{\xi}\in\partial g(T\ubar)$.

It remains to show the normal cone condition. For every $s\in\C$, \eqref{eq:smoothedGap} gives
\begin{equation*}
    \ip{\nabla f(u_{j})+T^*\xi_j,u_{j}-s}{}
    \leq
    \gap^{\alpha_{j}}(u_{j}).
\end{equation*}
Passing to the limit along the chosen subsequence yields
\begin{equation*}
    \ip{\nabla f(\ubar)+T^*\bar{\xi},\ubar-s}{}\leq0
    \qquad
    \forall s\in\C.
\end{equation*}
Equivalently,
$-\para{\nabla f(\ubar)+T^*\bar{\xi}}\in N_{\C}(\ubar)$. Thus $0\in\nabla f(\ubar)+T^*\bar{\xi}+N_{\C}(\ubar)$ as claimed.
\end{proof}

Combining \corref{cor:gap_vanishing_subsequence} with \thmref{thm:indicator_subsequential_stationarity} and \thmref{thm:lipschitz_subsequential_stationarity} gives the following.

\begin{corollary}[Existence of stationary cluster points]
\label{cor:stationary_cluster_points}
Under the assumptions of \thmref{thm:rates} with \assref{ass:g}\ref{ass:g=indicator}\ref{ass:g=indicator_nonempty} if $g=\iota_{\D}$, the sequence $\seq{x_k}$ generated by \FRAMES\ admits a convergent smoothed gap-vanishing subsequence with limit point $\xbar\in\C$ that is stationary for \eqref{P}, i.e.,
\begin{equation*}
    \begin{cases}
        0\in\nabla f(\xbar)+N_{\widetilde{\C}}(\xbar) & \text{under \assref{ass:g}\ref{ass:g=indicator}\ref{ass:g=indicator_nonempty}},\\
        0\in\nabla f(\xbar)+T^*\partial g(T\xbar)+N_{\C}(\xbar) & \text{under \assref{ass:g}\ref{ass:g=lipschitz} or \assref{ass:g}\ref{ass:g=indicator}\ref{ass:g=indicator_nonempty_ri}}.
    \end{cases}
\end{equation*}
\end{corollary}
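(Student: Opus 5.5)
The corollary follows by chaining three results already in place. First, I apply \corref{cor:gap_vanishing_subsequence}, which holds under the assumptions of \thmref{thm:rates}. It produces an increasing sequence $\seqj{k_j}$ with $x_{k_j}\to\xbar\in\C$ and $\gap^{\beta_{k_j}}(x_{k_j})\to0$. I then set $u_j:=x_{k_j}$ and $\alpha_j:=\beta_{k_j}$. Since $\beta_k=\beta_0(k+1)^{-q}$ with $q>0$ and $k_j\to\infty$, we get $\alpha_j\to0$. Also $\alpha_j\in\left]0,\rho^{-1}\right[$, because $\beta_0<\rho^{-1}$ and the schedule is nonincreasing. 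Finally, $u_j\in\C$ because the iterates of \FRAMES\ remain in $\C$. So $\seqj{u_j},\seqj{\alpha_j}$ form a convergent smoothed gap-vanishing sequence in the sense required by the two subsequential stationarity theorems.

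Next I split into cases according to \assref{ass:g}.

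\textbf{Indicator case.} Under \assref{ass:g}\ref{ass:g=indicator}\ref{ass:g=indicator_nonempty}, \thmref{thm:indicator_subsequential_stationarity} gives $\xbar\in\widetilde{\C}$ and $\widetilde{\gap}(\xbar)=0$, hence $0\in\nabla f(\xbar)+N_{\widetilde{\C}}(\xbar)$. If in addition \assref{ass:g}\ref{ass:g=indicator}\ref{ass:g=indicator_nonempty_ri} holds, the same theorem (through \lemref{lem:indicator_gap_certificate}) rewrites this as $0\in\nabla f(\xbar)+N_{\C}(\xbar)+T^*N_{\D}(T\xbar)$. Since $\partial\iota_{\D}=N_{\D}$, this is exactly the second displayed inclusion.

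\textbf{Lipschitz case.} Under \assref{ass:g}\ref{ass:g=lipschitz}, \thmref{thm:lipschitz_subsequential_stationarity} directly yields $\bar\xi\in\partial g(T\xbar)$ with $0\in\nabla f(\xbar)+T^*\bar\xi+N_{\C}(\xbar)$.

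I do not expect any genuine obstacle. The only points needing care are bookkeeping. One is checking that $\alpha_j$ lies in the admissible interval $\left]0,\rho^{-1}\right[$ required by \thmref{thm:lipschitz_subsequential_stationarity}. The other is reading the ri-case inclusion with the convention $\partial\iota_{\D}=N_{\D}$, so that it matches the unified formula $T^*\partial g(T\xbar)$. The corollary's hypothesis under the ri assumption also implicitly requires (a). This holds because (b) implies $\D\cap T(\C)\neq\varnothing$, as noted in \secref{sec:notation}, so \thmref{thm:indicator_subsequential_stationarity} applies.
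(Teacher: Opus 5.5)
Your proposal is correct and follows the paper's own route. The paper obtains this corollary by feeding the subsequence from Corollary~\ref{cor:gap_vanishing_subsequence}, with $u_j=x_{k_j}$ and $\alpha_j=\beta_{k_j}$, into Theorem~\ref{thm:indicator_subsequential_stationarity} or Theorem~\ref{thm:lipschitz_subsequential_stationarity}; your bookkeeping checks ($\alpha_j\in\left]0,\rho^{-1}\right[$, $\partial\iota_{\D}=N_{\D}$, and that the relative-interior condition implies $\D\cap T(\C)\neq\varnothing$) are exactly the ones needed.
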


The next section explains how this smoothed information can be transferred to stronger, finite-time guarantees on the nonsmooth stationarity certificates introduced in \secref{sec:gaps} under slightly stronger assumptions.


\section{Transferring Convergence Rates from Smoothed Gaps to Nonsmooth Gaps}
\label{sec:gap_transfer}

\thmref{thm:rates} shows that the average and best smoothed gaps converge to zero at the rate $\mathcal{O}(N^{-1/4})$. However, as discussed in \secref{sec:gaps}, $\gap^{\beta_k}(x_k)$ certifies stationarity only for the smoothed problem with objective $\Phi_k=f+g^{\beta_k}\circ T$. In this section, we relate this smoothed information to stationarity certificates for the original nonsmooth problem \eqref{P}.
The analysis naturally splits into the two cases of \assref{ass:g}.

\subsection{The Indicator Case}

Throughout this subsection, we suppose that \assref{ass:g}\ref{ass:g=indicator} holds so that $g=\iota_{\D}$ with either \ref{ass:g=indicator_nonempty} or \ref{ass:g=indicator_nonempty_ri} as well. In \secref{sec:smoothed_convergence} we showed that \lemref{lem:indicator_gap_transfer} was enough to prove qualitative stationarity of smoothed gap-vanishing subsequences for \eqref{P}. To obtain explicit finite-time rates for feasibility and for the signed gap, we use the following error-bound.

\begin{assumption}[Indicator error bound]
\label{ass:indicator_error_bound}
There exists a constant $\kappa_{\mathrm{ind}}>0$ such that, for every $x\in\C$,
\begin{equation*}
    \dist_{\widetilde{\C}}(x)
    \leq
    \kappa_{\mathrm{ind}}\dist_{\D}(Tx).
\end{equation*}
\end{assumption}

\begin{proposition}\label{prop:T_surjective_error_bound}
    Suppose \assref{ass:T}, \ref{ass:C}, and \ref{ass:g}\ref{ass:g=indicator}\ref{ass:g=indicator_nonempty_ri} hold and assume $T$ is surjective. Then, \assref{ass:indicator_error_bound} holds.
\end{proposition}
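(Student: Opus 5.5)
The plan is to factor the error bound into two pieces:
\begin{equation*}
    \dist_{\widetilde{\C}}(x)\;\leq\;\kappa_1\,\dist_{T^{-1}(\D)}(x)\;\leq\;\kappa_1\kappa_2\,\dist_{\D}(Tx)\qquad(x\in\C),
\end{equation*}
and take $\kappa_{\mathrm{ind}}:=\kappa_1\kappa_2$. The first inequality is a (bounded) linear regularity statement for the pair $\C$, $T^{-1}(\D)$. The second is a Hoffman-type bound that uses the surjectivity of $T$.

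The second inequality is the easy step, so I would do it first. Since $T$ is surjective, $TT^*$ is invertible, and $T^\dagger:=T^*(TT^*)^{-1}$ is a right inverse of $T$. For $x\in\R^n$ set $y:=\proj_{\D}(Tx)$ and $z:=x+T^\dagger(y-Tx)$. Then $Tz=y\in\D$, so $z\in T^{-1}(\D)$, and
\begin{equation*}
    \dist_{T^{-1}(\D)}(x)\leq\norm{z-x}{2}\leq\norm{T^\dagger}{\op}\dist_{\D}(Tx).
\end{equation*}
This gives $\kappa_2=\norm{T^\dagger}{\op}$.

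Next I would translate the qualification condition to the space $\R^n$. By \cite[Theorem~6.6]{rockafellar1998variational}-type calculus for relative interiors, $\operatorname{ri}(T(\C))=T(\operatorname{ri}\C)$. Hence \assref{ass:g}\ref{ass:g=indicator}\ref{ass:g=indicator_nonempty_ri} yields some $x_0\in\operatorname{ri}\C$ with $Tx_0\in\operatorname{ri}\D$. Because $T$ is linear and $T^{-1}(\operatorname{ri}\D)\neq\varnothing$, the preimage rule for relative interiors (Rockafellar's Convex Analysis, Theorem~6.7) gives $\operatorname{ri}\,T^{-1}(\D)=T^{-1}(\operatorname{ri}\D)$. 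Therefore $x_0\in\operatorname{ri}\C\cap\operatorname{ri}T^{-1}(\D)$, and in particular $\widetilde{\C}\neq\varnothing$.

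The main obstacle is the first inequality: showing that two closed convex sets in finite dimensions whose relative interiors intersect are boundedly linearly regular. I would invoke the Bauschke--Borwein theory of (bounded) linear regularity, under which this condition yields, for every bounded set $S$, a constant $\kappa_S$ with
\begin{equation*}
    \dist_{\C\cap T^{-1}(\D)}(x)\leq\kappa_S\max\{\dist_{\C}(x),\dist_{T^{-1}(\D)}(x)\}\qquad(x\in S).
\end{equation*}
Taking $S=\C$, which is compact, gives $\dist_{\C}(x)=0$ on $S$ and hence the first inequality with $\kappa_1=\kappa_{\C}$.

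If a self-contained argument is preferred, I would use a Slater-type convex-combination construction around $x_0$. For $x\in\C$, let $w$ be its projection onto $T^{-1}(\D)$. I would move from $w$ towards $x_0$ along a segment of length proportional to $\norm{x-w}{2}/r$, where $r$ is the radius of a relative ball about $x_0$ contained in both sets. The convex combination $(1-t)x+tx_0$, with $t\approx\norm{x-w}{2}/(r+\norm{x-w}{2})$, should then lie in $\widetilde{\C}$, with displacement bounded by $\dc\,t$. The delicate point in this construction is that the two affine hulls differ. I would handle it by decomposing $x-w$ into components parallel to $\operatorname{aff}T^{-1}(\D)$ and orthogonal to it, using that $\operatorname{aff}\C\cap\operatorname{aff}T^{-1}(\D)$ meets the relative ball around $x_0$. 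Since this is exactly where the finite-dimensional relative-interior hypothesis is needed, I would rely on the cited regularity theorem rather than redo it.
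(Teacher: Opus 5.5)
Your proposal is correct and follows the paper's proof essentially step for step. You transfer the qualification condition to $\operatorname{ri}(\C)\cap\operatorname{ri}(T^{-1}(\D))\neq\varnothing$ via the relative-interior rules for linear images and preimages. You then invoke bounded linear regularity on the compact set $\C$ (the paper cites Bauschke--Borwein--Li, Corollary~6) to get $\operatorname{dist}_{\widetilde{\C}}(x)\leq\kappa_0\operatorname{dist}_{T^{-1}(\D)}(x)$. Finally, you bound $\operatorname{dist}_{T^{-1}(\D)}(x)\leq\|T^\dagger\|_{\mathrm{op}}\operatorname{dist}_{\D}(Tx)$ with the same Moore--Penrose right-inverse construction. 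The Slater-type fallback is unnecessary. The only blemish is bibliographic: the relative-interior identities are Theorems~6.6--6.7 of Rockafellar's \emph{Convex Analysis}, not of the Rockafellar--Wets reference you attached to them.
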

\begin{proof}
    Fix $\ybar\in \ri(\D)\cap\ri(T(\C))$. Since $T(\ri(\C))=\ri(T(\C))$ under our assumptions, there exists $\xbar\in\ri(\C)$ such that $T\xbar=\ybar$. By \assref{ass:g}\ref{ass:g=indicator}\ref{ass:g=indicator_nonempty_ri}, it holds $\ri(T(\C))\cap\D\neq\varnothing$ and thus $\mathrm{range}(T)\cap\ri(\D)\neq\varnothing$ since $\ri(T(\C))\subset\mathrm{range}(T)$. From this we deduce that $T^{-1}(\ri(\D))=\ri(T^{-1}(\D))$ and therefore $\xbar\in \ri(T^{-1}(\D))$. Hence, $\ri(\C)\cap\ri(T^{-1}(\D))\neq\varnothing$. By \cite[Corollary 6]{Baus99}, $\exists \,\kappa_0>0$ such that
    \begin{equation}
      \label{e:dist1}
        \dist_{\C\cap T^{-1}(\D)}(x)\leq \kappa_0\dist_{T^{-1}(\D)}(x),\quad\forall x\in\C.
    \end{equation}
    Since $T$ is surjective, $TT^{\dagger}=\Id_{\R^m}$ with $T^\dagger$ the \emph{Moore-Penrose right inverse} $T^\dagger = T^*(TT^*)^{-1}$. Fix $x\in\C$ and let $y=\proj_{\D}(Tx)$ so $\norm{Tx-y}{}=\dist_{\D}(Tx)$. Define $\hat{x}=x+T^\dagger(y-Tx)$; then 
    \begin{equation*}
        T\hat{x}=Tx+TT^{\dagger}(y-Tx)=Tx+(y-Tx)=y\in\D
    \end{equation*}
    and so $\hat{x}\in T^{-1}(\D)$. Therefore,
    \begin{equation*}
        \dist_{T^{-1}(\D)}(x)\leq \norm{x-\hat{x}}{}=\|T^{\dagger}(y-Tx)\|\leq\|T^{\dagger}\|_\mathrm{op}\norm{y-Tx}{}=\|T^{\dagger}\|_{\mathrm{op}}\dist_{\D}(Tx).
    \end{equation*}
    Combining with the previous estimate gives
    \begin{equation*}
        \dist_{\widetilde{\C}}(x) = \dist_{\C\cap T^{-1}(\D)}(x)\leq \kappa_0\dist_{T^{-1}(\D)}(x)\leq \kappa_0\|T^\dagger\|_{\mathrm{op}}\dist_{\D}(Tx).
    \end{equation*}
\end{proof}

\begin{remark}
\assref{ass:indicator_error_bound} is a standard metric regularity condition for the system $x\in \C$, $Tx\in \D$. When $T$ is surjective, a classical sufficient condition is the relative interior qualification already present in \assref{ass:g}\ref{ass:g=indicator}\ref{ass:g=indicator_nonempty_ri}, as demonstrated in \propref{prop:T_surjective_error_bound}.
While surjectivity of $T$ together with \assref{ass:g}\ref{ass:g=indicator}\ref{ass:g=indicator_nonempty_ri} is sufficient to guarantee \ref{ass:indicator_error_bound} holds, surjectivity is not necessary. Also, note that Assumption~\ref{ass:indicator_error_bound} is not required for the qualitative subsequential stationarity result in \thmref{thm:indicator_subsequential_stationarity}.
\end{remark}

\begin{lemma}[Indicator finite-time transfer]
\label{lem:indicator_finite_time_transfer}
Suppose \assref{ass:g}\ref{ass:g=indicator}\ref{ass:g=indicator_nonempty} and \assref{ass:indicator_error_bound} hold and let $x\in\C$ and $\beta>0$.
Then
\begin{equation}
\label{eq:indicator_feasibility_bound}
    \dist_{\D}(Tx)
    \leq
    \frac{\fconstant\kappa_{\mathrm{ind}}\beta}{2}
    +
    \frac{1}{2}
    \sqrt{
        \fconstant^2\kappa_{\mathrm{ind}}^2\beta^2
        +
        4\beta\gap^\beta(x)
    }
\end{equation}
and, for the signed gap given in \defref{def:indicatorGap},
\begin{equation}
\label{eq:indicator_signed_gap_bound}
    \left|\widetilde{\gap}(x)\right|
    \leq
    \max\{
    \gap^\beta(x)
    ,
    \fconstant\kappa_{\mathrm{ind}}\dist_{\D}(Tx)\}.
\end{equation}
Moreover, 
\begin{equation*}
    \dist_{\D}(Tx)
    =
    \mathcal{O}\Big(\max\big\{\beta,\sqrt{\beta\gap^\beta(x)}\big\}\Big)
    \quad\text{and}
    \quad
     \left|\widetilde{\gap}(x)\right|
    =
    \mathcal{O}\left(\max\left\{\beta,\gap^\beta(x)\right\}\right).
\end{equation*}
\end{lemma}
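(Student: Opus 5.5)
Everything follows from \lemref{lem:indicator_gap_transfer}, combined with a lower bound on $\widetilde{\gap}(x)$ obtained from \assref{ass:indicator_error_bound}. Fix $x\in\C$, set $d:=\dist_{\D}(Tx)$, and let $\hat x:=\proj_{\widetilde{\C}}(x)$. Then $\norm{x-\hat x}{2}=\dist_{\widetilde{\C}}(x)\leq\kappa_{\mathrm{ind}}d$. Taking $s=\hat x$ in the definition \eqref{e:indigap-def} and using $\norm{\nabla f(x)}{2}\leq\fconstant$ (valid since $x\in\C$) gives
\begin{equation*}
\widetilde{\gap}(x)\geq\ip{\nabla f(x),x-\hat x}{}\geq-\fconstant\kappa_{\mathrm{ind}}d.
\end{equation*}
This lower bound is the key step. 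It replaces the crude bound $-\fconstant\dc$ used in \thmref{thm:indicator_subsequential_stationarity} with one that vanishes along with the infeasibility.

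For the feasibility bound \eqref{eq:indicator_feasibility_bound}, substitute this lower bound into \eqref{eq:indicator_gap_transfer}. This gives
\begin{equation*}
\beta^{-1}d^2-\fconstant\kappa_{\mathrm{ind}}d-\gap^\beta(x)\leq0,
\end{equation*}
which, after multiplying by $\beta$, is the quadratic inequality $d^2-\fconstant\kappa_{\mathrm{ind}}\beta d-\beta\gap^\beta(x)\leq0$ in $d\geq0$. Since $\gap^\beta(x)\geq0$, the quadratic has one nonpositive and one nonnegative root. Hence $d$ is at most the larger root $\tfrac12\big(\fconstant\kappa_{\mathrm{ind}}\beta+\sqrt{\fconstant^2\kappa_{\mathrm{ind}}^2\beta^2+4\beta\gap^\beta(x)}\big)$, which is exactly \eqref{eq:indicator_feasibility_bound}.

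For the signed gap \eqref{eq:indicator_signed_gap_bound}, \lemref{lem:indicator_gap_transfer} together with $d^2\geq0$ gives the upper bound $\widetilde{\gap}(x)\leq\gap^\beta(x)$, and the key step gives the lower bound $\widetilde{\gap}(x)\geq-\fconstant\kappa_{\mathrm{ind}}d$. Both right-hand sides are nonnegative, so $|\widetilde{\gap}(x)|$ is bounded by their maximum.

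For the $\mathcal{O}$ statements, apply $\sqrt{a+b}\leq\sqrt a+\sqrt b$ to \eqref{eq:indicator_feasibility_bound} to get $d\leq\fconstant\kappa_{\mathrm{ind}}\beta+\sqrt{\beta\gap^\beta(x)}$, hence $d=\mathcal{O}(\max\{\beta,\sqrt{\beta\gap^\beta(x)}\})$. Inserting this into \eqref{eq:indicator_signed_gap_bound} leaves a term $\sqrt{\beta\gap^\beta(x)}$. AM-GM gives $\sqrt{\beta\gap^\beta(x)}\leq\tfrac12(\beta+\gap^\beta(x))\leq\max\{\beta,\gap^\beta(x)\}$, which yields $|\widetilde{\gap}(x)|=\mathcal{O}(\max\{\beta,\gap^\beta(x)\})$.

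I expect no real obstacle once the lower bound on $\widetilde{\gap}$ via the error bound is in place. The remaining care points are using the positive root of the quadratic and tracking signs in the absolute value.
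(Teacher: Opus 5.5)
Your proof is correct and follows the paper's argument essentially step for step. You obtain the lower bound $\widetilde{\gap}(x)\geq-\fconstant\kappa_{\mathrm{ind}}\dist_{\D}(Tx)$ through the projection onto $\widetilde{\C}$ and the error bound, solve the resulting quadratic inequality at its larger root, and get the two-sided bound on the signed gap. The only cosmetic difference is that you reach the $\mathcal{O}$ estimates via $\sqrt{a+b}\leq\sqrt{a}+\sqrt{b}$ and AM-GM, whereas the paper invokes $\sqrt{ab}\leq\max\{a,b\}$ directly.
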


\begin{proof}
For any $x\in\C$, the projection satisfies $\proj_{\widetilde{\C}}(x)\in\widetilde{\C}$ and the definition of $\widetilde{\gap}$ gives
\begin{equation}
\label{e:63i}
    \begin{aligned}
    \widetilde{\gap}(x)
    =
    \max_{s\in\widetilde{\C}}
    \ip{\nabla f(x),x-s}{}
    &\geq
    \ip{\nabla f(x),x-\proj_{\widetilde{\C}}(x)}{}\\
    &\geq
    -\norm{\nabla f(x)}{2}\norm{x-\proj_{\widetilde{\C}}(x)}{2}\\
    &\geq
    -\fconstant\dist_{\widetilde{\C}}(x)\\
    &\geq
    -\fconstant\kappa_{\mathrm{ind}}\dist_{\D}(Tx),
    \end{aligned}
\end{equation}
where the last inequality uses \assref{ass:indicator_error_bound}. Combining this lower bound with \lemref{lem:indicator_gap_transfer} yields
\begin{equation*}
    \frac{1}{\beta}\dist_{\D}^2(Tx)
    -
    \fconstant\kappa_{\mathrm{ind}}\dist_{\D}(Tx)
    \leq
    \gap^\beta(x).
\end{equation*}
Letting $\delta:=\dist_{\D}(Tx)$, this is equivalent to
$
    \delta^2-\fconstant\kappa_{\mathrm{ind}}\beta\delta-\beta\gap^\beta(x)\leq 0.
$
Since $\delta\geq0$, it is bounded above by the larger root of the quadratic, which gives \eqref{eq:indicator_feasibility_bound}.
Next, \lemref{lem:indicator_gap_transfer} gives
$
    \widetilde{\gap}(x)
    \leq
    \gap^\beta(x)$, which with \eqref{e:63i}
implies \eqref{eq:indicator_signed_gap_bound}. The asymptotic estimates follow from \eqref{eq:indicator_feasibility_bound}, \eqref{eq:indicator_signed_gap_bound}, and the inequality $\sqrt{ab}\leq\max\{a,b\}$ for $a,b\geq0$.
\end{proof}

\begin{theorem}[Indicator nonsmooth certificate rate]
\label{thm:indicator_nonsmooth_gap_rate}
Suppose \assref{ass:g}\ref{ass:g=indicator}\ref{ass:g=indicator_nonempty} and \assref{ass:indicator_error_bound} hold and let $\seq{x_k}$ be generated by \FRAMES\ with
\begin{equation*}
    \gamma_k=(k+1)^{-1/2},
    \qquad
    \beta_k=\beta_0(k+1)^{-1/4}.
\end{equation*}
For $N\geq2$, let $\kmin\in\argmin\limits_{\lfloor N/2\rfloor\leq k \leq N-1}\gap^{\beta_k}(x_k)$. Then the last-half best iterate satisfies
\begin{equation*}
    \dist_{\D}(Tx_{\kmin})
    =
    \mathcal{O}(N^{-1/4})
\end{equation*}
and, for the signed gap given in \defref{def:indicatorGap},
\begin{equation*}
    \left|\widetilde{\gap}(x_{\kmin})\right|
    =
    \mathcal{O}(N^{-1/4}).
\end{equation*}
\end{theorem}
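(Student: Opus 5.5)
The plan is to combine \corref{cor:last_half_best_iterate} with the pointwise transfer estimates of \lemref{lem:indicator_finite_time_transfer}, evaluated at $x=x_{\kmin}$ and $\beta=\beta_{\kmin}$. No new analysis of the algorithm is needed: the statement is a substitution of the two rates from the corollary into the deterministic inequalities of the lemma.

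First, since $p=1/2$ and $q=1/4$ satisfy $q<p$ and $p+q<1$, the hypotheses of \thmref{thm:rates} hold. \corref{cor:last_half_best_iterate} then gives, for $N\geq2$,
\begin{equation*}
    \gap^{\beta_{\kmin}}(x_{\kmin})\leq 2CN^{-1/4}
    \quad\text{and}\quad
    \beta_{\kmin}\leq 2^{1/4}\beta_0N^{-1/4}.
\end{equation*}
One check is that the constant $C$ in \eqref{eq:C_constant} uses $B_{\prox}=e_{\D,T(\C)}$, which is finite under \assref{ass:g}\ref{ass:g=indicator}\ref{ass:g=indicator_nonempty}, so $C$ is independent of $N$.

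Next, apply \eqref{eq:indicator_feasibility_bound} with $x=x_{\kmin}\in\C$ and $\beta=\beta_{\kmin}$. The term $\frac{1}{2}\fconstant\kappa_{\mathrm{ind}}\beta_{\kmin}$ is $\mathcal{O}(N^{-1/4})$. Using $\sqrt{a+b}\leq\sqrt a+\sqrt b$, the square-root term is at most
\begin{equation*}
    \fconstant\kappa_{\mathrm{ind}}\beta_{\kmin}+2\sqrt{\beta_{\kmin}\gap^{\beta_{\kmin}}(x_{\kmin})}
    \leq \fconstant\kappa_{\mathrm{ind}}2^{1/4}\beta_0N^{-1/4}+2\sqrt{2^{5/4}\beta_0C}\,N^{-1/4}.
\end{equation*}
This gives $\dist_{\D}(Tx_{\kmin})=\mathcal{O}(N^{-1/4})$ with an explicit constant. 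The product $\beta\gap^\beta$ actually decays like $N^{-1/2}$, so its square root also contributes $N^{-1/4}$, and the rate is driven equally by the smoothing parameter and the smoothed gap.

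Finally, \eqref{eq:indicator_signed_gap_bound} bounds $|\widetilde{\gap}(x_{\kmin})|$ by the maximum of $\gap^{\beta_{\kmin}}(x_{\kmin})\leq 2CN^{-1/4}$ and $\fconstant\kappa_{\mathrm{ind}}\dist_{\D}(Tx_{\kmin})$, and both are $\mathcal{O}(N^{-1/4})$ by the previous steps. The only step needing care is the consistency of indices: the lemma must be evaluated with $\beta_{\kmin}$, the same smoothing parameter used to define the gap at iteration $\kmin$, rather than with $\beta_N$. The last-half selection in the corollary is what guarantees that $\beta_{\kmin}$ is itself $\mathcal{O}(N^{-1/4})$.
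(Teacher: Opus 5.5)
Your proposal is correct and matches the paper's proof: \corref{cor:last_half_best_iterate} gives $\gap^{\beta_{\kmin}}(x_{\kmin})=\mathcal{O}(N^{-1/4})$ and $\beta_{\kmin}=\mathcal{O}(N^{-1/4})$, and you then apply \lemref{lem:indicator_finite_time_transfer} at $x=x_{\kmin}$, $\beta=\beta_{\kmin}$. Your use of $\sqrt{a+b}\leq\sqrt{a}+\sqrt{b}$ to get explicit constants is a more detailed version of the paper's $\mathcal{O}(\max\{\beta,\sqrt{\beta\gap^\beta(x)}\})$ step.
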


\begin{proof}
By \corref{cor:last_half_best_iterate}, $\gap^{\beta_{\kmin}}(x_{\kmin}) = \mathcal{O}(N^{-1/4})$ and $\beta_{\kmin} = \mathcal{O}(N^{-1/4})$.
Applying \lemref{lem:indicator_finite_time_transfer} with $x=x_{\kmin}$ and $\beta=\beta_{\kmin}$ gives
\begin{equation*}
    \dist_{\D}(Tx_{\kmin})
    =
    \mathcal{O}\left(\max\left\{\beta_{\kmin},\sqrt{\beta_{\kmin}\gap^{\beta_{\kmin}}(x_{\kmin})}\right\}\right)
    =
    \mathcal{O}(N^{-1/4}).
\end{equation*}
The estimate on $\left|\widetilde{\gap}(x_{\kmin})\right|$ follows from \eqref{eq:indicator_signed_gap_bound}.
\end{proof}

\begin{remark}[Logarithmic smoothing is slower]\label{rmk:log_is_slower}
Just like how Lemma~\ref{lem:indicator_finite_time_transfer} yields the $\mathcal{O}(N^{-1/4})$ results in \thmref{thm:indicator_nonsmooth_gap_rate}, a straightforward application of Lemma~\ref{lem:indicator_finite_time_transfer} with the schedules $\beta_k=\mathcal{O}(1/\log(k+2))$ and $\gamma_k=\mathcal{O}(1/\sqrt{k})$ suggested in \cite{woodstock2025splitting,halbey2025efficient} provides a nonsmooth convergence rate of $\mathcal{O}(\log(N+2)^{-1})$ for the average absolute value of the signed gaps or the last-half best iterates $|\widetilde{\gap}(x_{k_N^*})|$, which is much slower. This is because the analysis in \cite{woodstock2025splitting,halbey2025efficient} only characterizes the effect of the smoothing schedule on the rate of convergence of the smoothed gaps to $0$ and does not take into account the difference between the smooth gap (associated to the smoothed problem) and the signed gap (associated to \eqref{P}).
\end{remark}

\subsection{The Indicator Case with Inconsistent Sets}
\label{subsec:inconsistent_indicator_case}

In the most general case of \assref{ass:g}\ref{ass:g=indicator}, $T^{-1}(\D)\cap\C$ may be empty. Then one cannot hope to show a stationarity result for \eqref{P}, since the original problem has no feasible points. Instead, the cluster points of smoothed gap-vanishing subsequences solve a best-approximate feasibility problem in the image space, namely they minimize $x\mapsto \dist_{\D}(Tx)$ over $\C$.

\begin{assumption}[Inconsistent system]
\label{ass:indicator_inconsistent}
Assumption~\ref{ass:g}\ref{ass:g=indicator} holds and
$
    \D\cap T(\C)=\varnothing.
$
\end{assumption}

Under Assumptions~\ref{ass:C}, \ref{ass:g}\ref{ass:g=indicator}, and \ref{ass:indicator_inconsistent}, compactness of $T(\C)$ and closedness of $\D$ give $\min_{x\in\C}\dist_{\D}(Tx) := \delta > 0$. Since $\C$ is bounded, \cite[Prop.~11.15, Prop.~27.1, \& Cor.~4.24]{bauschke2017convex} yield that the following \emph{closest-point set} is nonempty, compact, and convex
\begin{equation}
\label{eq:closest_point_set}
    \C^\dagger
    :=
    \argmin_{x\in\C}\tfrac{1}{2}\dist_{\D}^2(Tx)
    =
    \argmin_{x\in\C}\dist_{\D}(Tx).
\end{equation}
\begin{lemma}[Inconsistent gap-transfer]
\label{lem:inconsistent_indicator_gap_transfer}
Suppose \assref{ass:f}, \ref{ass:T}, \ref{ass:C}, \ref{ass:g}\ref{ass:g=indicator} and \ref{ass:indicator_inconsistent} hold. Let $\delta:=\min_{\xbar\in\C}\dist_{\D}(T\xbar)$, $x\in\C$, $\beta>0$, and let $C^\dagger$ be given by \eqref{eq:closest_point_set}. Then
\begin{equation}
\label{eq:inconsistent_best_approx_function_bound}
    0
    \leq
    \delta(\dist_{\D}(Tx)-\delta)
    \leq
    \dist_{\D}^2(Tx)-\delta^2
    \leq
    2\beta\gap^\beta(x)+2\beta\fconstant\dc.
\end{equation}
Moreover, for every $s\in\C^\dagger$,
\begin{equation}
\label{eq:inconsistent_secondary_one_sided_bound}
    \ip{\nabla f(x),x-s}{}\leq \gap^\beta(x).
\end{equation}
\end{lemma}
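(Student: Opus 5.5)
Fix $x\in\C$, $\beta>0$, and write $p:=\proj_{\D}(Tx)$, $r:=Tx-p$, so that $\norm{r}{2}=\dist_{\D}(Tx)$ and $\nabla(g^\beta\circ T)(x)=\beta^{-1}T^*r$ by Proposition~\ref{prop:moreauProperties}. The plan follows the proof of \lemref{lem:indicator_gap_transfer}, but restricts the maximization in $\gap^\beta(x)$ to the closest-point set $\C^\dagger$ rather than $\widetilde{\C}$, which is now empty.

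\textbf{Step 1: monotonicity inequalities.} The two leftmost inequalities in \eqref{eq:inconsistent_best_approx_function_bound} are elementary. Set $d:=\dist_{\D}(Tx)$; then $d\geq\delta$ by definition of $\delta$. Hence $\delta(d-\delta)\geq0$, and
\[
d^2-\delta^2=(d+\delta)(d-\delta)\geq\delta(d-\delta).
\]

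\textbf{Step 2: the key inner-product estimate.} Fix $s\in\C^\dagger$ and put $y:=Ts$, $q:=\proj_{\D}(y)$, so that $\norm{y-q}{2}=\delta$. The function $h:=\tfrac12\dist_{\D}^2$ is convex with gradient $\Id-\proj_{\D}$. Applying the gradient inequality for $h$ at $Tx$ in direction $Ts$ gives
\[
\tfrac12\delta^2\;\geq\;\tfrac12 d^2+\ip{r,Ts-Tx}{},
\qquad\text{that is,}\qquad
\ip{r,Tx-Ts}{}\geq\tfrac12(d^2-\delta^2).
\]
This identity is the analogue of \eqref{e:43i}, with the projection inequality replaced by convexity of $h$. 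I expect this to be the one step needing care: one must choose to use the convexity of $\dist_{\D}^2$ rather than a projection inequality onto $\D$, since $Ts\notin\D$. It also produces the factor $\tfrac12$ that explains the constant $2$ in the statement.

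\textbf{Step 3: plugging into the smoothed gap.} Since $s\in\C$, the definition \eqref{eq:smoothedGap} gives
\[
\gap^\beta(x)\;\geq\;\ip{\nabla f(x),x-s}{}+\beta^{-1}\ip{r,Tx-Ts}{}
\;\geq\;\ip{\nabla f(x),x-s}{}+\tfrac{1}{2\beta}(d^2-\delta^2).
\]
For the rightmost inequality in \eqref{eq:inconsistent_best_approx_function_bound}, bound $\ip{\nabla f(x),x-s}{}\geq-\fconstant\dc$ by Cauchy--Schwarz and multiply through by $2\beta$. For \eqref{eq:inconsistent_secondary_one_sided_bound}, instead drop the nonnegative term $\tfrac{1}{2\beta}(d^2-\delta^2)$, which is nonnegative by Step 1, to obtain $\ip{\nabla f(x),x-s}{}\leq\gap^\beta(x)$ for every $s\in\C^\dagger$.
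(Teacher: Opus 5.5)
Your proposal is correct and is essentially the paper's proof: both apply the convexity (gradient) inequality for $\tfrac12\dist_{\D}^2\circ T$ at a point of the nonempty closest-point set $\C^\dagger$, and then test $\gap^\beta(x)$ against that same point. The difference is cosmetic. You fold both conclusions into the single inequality $\gap^\beta(x)\geq\langle\nabla f(x),x-s\rangle+\tfrac{1}{2\beta}(\dist_{\D}^2(Tx)-\delta^2)$, whereas the paper invokes the convexity inequality separately for each of the two claims.
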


\begin{proof}
For every $z\in\C$, using $\Phi_\beta=f+\beta^{-1}(\tfrac{1}{2}\dist_{\D}^2(T\cdot))$ gives
\begin{equation*}
    \begin{aligned}
    \ip{T^*(Tx-\proj_{\D}(Tx)),x-z}{}
    &=
    \beta\ip{\nabla f(x)+\beta^{-1}T^*(Tx-\proj_{\D}(Tx)),x-z}{}
    -
    \beta\ip{\nabla f(x),x-z}{}\\
    &\leq
    \beta\gap^\beta(x)+\beta\fconstant\dc.
    \end{aligned}
\end{equation*}
Let $x^\dagger\in\C^\dagger$. By convexity of $x\mapsto \tfrac{1}{2}\dist_{\D}^2(Tx)$,
\begin{equation*}
    \dist_{\D}^2(Tx)-\delta^2
    \leq
    2\ip{T^*(Tx-\proj_{\D}(Tx)),x-x^\dagger}{}
    \leq
    2\beta\gap^\beta(x)+2\beta\fconstant\dc.
\end{equation*}
Since $\dist_{\D}(Tx)\geq\delta$, we also have
\begin{equation*}
    \delta(\dist_{\D}(Tx)-\delta)
    \leq
    (\dist_{\D}(Tx)-\delta)(\dist_{\D}(Tx)+\delta)
    =
    \dist_{\D}^2(Tx)-\delta^2,
\end{equation*}
which proves \eqref{eq:inconsistent_best_approx_function_bound}.
Now fix $s\in\C^\dagger$. By convexity of $x\mapsto \tfrac{1}{2}\dist_{\D}^2(Tx)$,
\begin{equation*}
    \tfrac{1}{2}\dist_{\D}^2(Ts)
    \geq
    \tfrac{1}{2}\dist_{\D}^2(Tx)+\ip{T^*(Tx-\proj_{\D}(Tx)),s-x}{}.
\end{equation*}
Since $\dist_{\D}(Ts)=\delta\leq \dist_{\D}(Tx)$, it follows that
$
    \ip{T^*(Tx-\proj_{\D}(Tx)),x-s}{}\geq0.
$
Therefore,
\begin{equation*}
    \begin{aligned}
        \ip{\nabla f(x),x-s}{}
        &=
        \ip{\nabla f(x)+\beta^{-1}T^*(Tx-\proj_{\D}(Tx)),x-s}{}
        -
        \beta^{-1}\ip{T^*(Tx-\proj_{\D}(Tx)),x-s}{}\\
        &\leq
        \gap^\beta(x),
    \end{aligned}
\end{equation*}
which proves \eqref{eq:inconsistent_secondary_one_sided_bound}.
\end{proof}

\begin{theorem}[Inconsistent gap-vanishing sequences]
\label{thm:inconsistent_indicator_subsequential}
Suppose \assref{ass:f}, \ref{ass:T}, \ref{ass:C}, and \ref{ass:indicator_inconsistent} hold, let $\delta:=\min_{\xbar\in\C}\dist_{\D}(T\xbar)$, $C^\dagger$ be given by \eqref{eq:closest_point_set}, and suppose $\seqj{u_j}\subset\C$ with $\seqj{\alpha_j}\subset]0,+\infty[$ define a convergent smoothed gap-vanishing sequence, i.e.,
\begin{equation*}
    \alpha_j\to0,
    \qquad u_j\to\ubar,
    \qquad\text{and}\qquad
    \gap^{\alpha_j}(u_j)\to0.
\end{equation*}
Then $\dist_{\D}(Tu_j)\to\delta$ and $\dist_{\C^\dagger}(u_j)\to0$. Moreover,
\begin{equation}
\label{eq:inconsistent_secondary_gap_convergence}
    \max_{s\in\C^\dagger}\ip{\nabla f(u_j),u_j-s}{}\to0.
\end{equation}
Consequently, every cluster point $\ubar$ of $\seqj{u_j}$ belongs to $\C^\dagger$ and satisfies
\begin{equation*}
    0\in T^*(T\ubar-\proj_{\D}(T\ubar))+N_{\C}(\ubar).
\end{equation*}
Furthermore, $\ubar$ is stationary for the secondary problem
\begin{equation*}
    \min_{x\in\C^\dagger}f(x),
\end{equation*}
i.e., $0\in\nabla f(\ubar)+N_{\C^\dagger}(\ubar)$ and, equivalently in terms of the Frank-Wolfe gap,
\begin{equation}
\label{eq:secondary_stationarity_inconsistent}
    \max_{s\in\C^\dagger}\ip{\nabla f(\ubar),\ubar-s}{}=0.
\end{equation}
In particular, if $f$ is convex on $\C^\dagger$ then $\ubar\in\argmin_{x\in\C^\dagger}f(x)$.
\end{theorem}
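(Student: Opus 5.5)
The plan is to feed the sequence into \lemref{lem:inconsistent_indicator_gap_transfer} and pass to the limit.

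\textbf{Distance and closest-point set.} Applying \eqref{eq:inconsistent_best_approx_function_bound} with $x=u_j$, $\beta=\alpha_j$ gives
\[
0\le \dist_{\D}^2(Tu_j)-\delta^2\le 2\alpha_j\gap^{\alpha_j}(u_j)+2\alpha_j\fconstant\dc .
\]
Since $\alpha_j\to0$ and $\gap^{\alpha_j}(u_j)\to0$, the right-hand side tends to zero, so $\dist_{\D}(Tu_j)\to\delta$. By continuity of $x\mapsto\dist_{\D}(Tx)$, every cluster point $\ubar$ satisfies $\dist_{\D}(T\ubar)=\delta$, i.e.\ $\ubar\in\C^\dagger$. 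To get $\dist_{\C^\dagger}(u_j)\to0$, argue by contradiction. If it fails, compactness of $\C$ yields a subsequence with $\dist_{\C^\dagger}(u_j)\ge\varepsilon$ that converges to a point outside $\C^\dagger$. This contradicts the previous sentence, because $\dist_{\C^\dagger}$ is continuous.

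\textbf{Secondary gap.} The lower side of \eqref{eq:inconsistent_secondary_gap_convergence} comes from choosing $s=\proj_{\C^\dagger}(u_j)$, which gives
\[
\max_{s\in\C^\dagger}\ip{\nabla f(u_j),u_j-s}{}\ge -\fconstant\dist_{\C^\dagger}(u_j)\to0 .
\]
The upper side comes from \eqref{eq:inconsistent_secondary_one_sided_bound}, which bounds the same maximum by $\gap^{\alpha_j}(u_j)\to0$. Together these give \eqref{eq:inconsistent_secondary_gap_convergence}. The map $u\mapsto\max_{s\in\C^\dagger}\ip{\nabla f(u),u-s}{}$ is continuous, since $\C^\dagger$ is compact and $\nabla f$ is continuous. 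Passing to the limit therefore yields \eqref{eq:secondary_stationarity_inconsistent}. The standard equivalence \eqref{e:gap-stationarity-rel} applied on the compact convex set $\C^\dagger$ then gives $0\in\nabla f(\ubar)+N_{\C^\dagger}(\ubar)$. If $f$ is convex on $\C^\dagger$, this first-order condition is sufficient for optimality, so $\ubar\in\argmin_{\C^\dagger}f$.

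\textbf{Best-approximation inclusion.} This follows from $\ubar\in\C^\dagger$. The function $x\mapsto\tfrac12\dist_{\D}^2(Tx)$ is convex and differentiable with gradient $T^*(Tx-\proj_{\D}(Tx))$, and $\ubar$ minimizes it over the convex set $\C$. Fermat's rule for a smooth convex function plus an indicator then gives $0\in T^*(T\ubar-\proj_{\D}(T\ubar))+N_{\C}(\ubar)$.

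The main obstacle is the full-sequence statement $\dist_{\C^\dagger}(u_j)\to0$ and the convergence of the secondary gap, since the hypothesis only gives convergence of $u_j$ to $\ubar$. With convergence assumed, both follow directly via the compactness argument. The genuine content is the one-sided bound \eqref{eq:inconsistent_secondary_one_sided_bound}, which the previous lemma already supplies.
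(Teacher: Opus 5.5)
Your proposal is correct and follows the paper's proof essentially step for step. It uses the same ingredients: the squared-distance bound from \lemref{lem:inconsistent_indicator_gap_transfer} for $\dist_{\D}(Tu_j)\to\delta$, a contradiction argument for $\dist_{\C^\dagger}(u_j)\to0$, the projection lower bound together with \eqref{eq:inconsistent_secondary_one_sided_bound} for the secondary gap, and first-order optimality for the two inclusions. The only cosmetic difference is that you invoke compactness of $\C$ to extract a convergent subsequence in the contradiction step, which is unnecessary because $u_j\to\ubar$ is already assumed.
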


\begin{proof}
By \lemref{lem:inconsistent_indicator_gap_transfer},
\begin{equation*}
    0
    \leq
    \dist_{\D}^2(Tu_j)-\delta^2
    \leq
    2\alpha_j\gap^{\alpha_j}(u_j)+2\alpha_j\fconstant\dc
    \to0.
\end{equation*}
Thus $\dist_{\D}(Tu_j)\to\delta$.

We next show that $\dist_{\C^\dagger}(u_j)\to0$. Assume for the sake of contradiction that there exist $\varepsilon>0$ and a subsequence such that $\dist_{\C^\dagger}(u_j)\geq\varepsilon$. Since $u_j\to\ubar\in\C$, the subsequence also converges to $\ubar$. Continuity of $\dist_{\D}\circ T$ gives $\dist_{\D}(T\ubar)=\delta$, hence $\ubar\in\C^\dagger$, contradicting $\dist_{\C^\dagger}(u_j)\geq\varepsilon$.

For all $j\in\N$, let
\begin{equation*}
    G_j:=\max_{s\in\C^\dagger}\ip{\nabla f(u_j),u_j-s}{}.
\end{equation*}
Taking the maximum over $s\in C^\dagger$ in \eqref{eq:inconsistent_secondary_one_sided_bound} gives $G_j\leq \gap^{\alpha_j}(u_j)\to0$. For the lower bound, let $p_j:=\proj_{\C^\dagger}(u_j)$. Since $p_j\in\C^\dagger$,
\begin{equation*}
    G_j\geq \ip{\nabla f(u_j),u_j-p_j}{}\geq -\fconstant\norm{u_j-p_j}{2}=-\fconstant\dist_{\C^\dagger}(u_j)\to0.
\end{equation*}
Therefore $G_j\to0$, proving \eqref{eq:inconsistent_secondary_gap_convergence}.

Let $\ubar$ be a cluster point of $\seqj{u_j}$. Since $\dist_{\C^\dagger}(u_j)\to0$ and $\C^\dagger$ is closed, $\ubar\in\C^\dagger$. Since $\ubar$ minimizes $x\mapsto \tfrac{1}{2}\dist_{\D}^2(Tx)$ over $\C$, the first-order optimality condition gives
\begin{equation*}
    0\in T^*(T\ubar-\proj_{\D}(T\ubar))+N_{\C}(\ubar).
\end{equation*}
Passing to the limit in \eqref{eq:inconsistent_secondary_gap_convergence}, using continuity of $\nabla f$ and compactness of $\C^\dagger$, gives \eqref{eq:secondary_stationarity_inconsistent}. The normal cone inclusion $0\in\nabla f(\ubar)+N_{\C^\dagger}(\ubar)$ is the standard Frank-Wolfe gap characterization on the nonempty compact convex set $\C^\dagger$ \cite{CGsurvey}.

Finally, if $f$ is convex on $\C^\dagger$, then for every $s\in\C^\dagger$,
\begin{equation*}
    f(s)\geq f(\ubar)+\ip{\nabla f(\ubar),s-\ubar}{}\geq f(\ubar),
\end{equation*}
so $\ubar$ minimizes $f$ over $\C^\dagger$.
\end{proof}

\begin{corollary}[Last-half best iterate in the inconsistent case]
\label{cor:inconsistent_indicator_last_half}
Fix $p,q\in\left]0,1\right[$ satisfying $q<p$ and $p+q<1$ and set $\gamma_k=(k+1)^{-p}$ and $\beta_k=\beta_0(k+1)^{-q}$ with $\beta_0\in\left]0,\rho^{-1}\right[$. Suppose \assref{ass:f}, \ref{ass:T}, \ref{ass:C}, and \ref{ass:indicator_inconsistent} hold and let $\seq{x_k}$ be generated by \FRAMES. For $N\geq2$, let
\begin{equation*}
    \kmin\in\argmin_{\lfloor N/2\rfloor\leq k\leq N-1}\gap^{\beta_k}(x_k).
\end{equation*}
Then, as $N\to\infty$,
\begin{equation*}
    \dist_{\D}(Tx_{\kmin})\to\delta
    \qquad\text{and}\qquad
    \dist_{\C^\dagger}(x_{\kmin})\to0.
\end{equation*}
Moreover,
\begin{equation}
\label{eq:inconsistent_last_half_secondary_convergence}
    \max_{s\in\C^\dagger}\ip{\nabla f(x_{\kmin}),x_{\kmin}-s}{}\to0.
\end{equation}
Every cluster point of $\seq{x_{\kmin}}$ is stationary for $\min_{x\in\C^\dagger}f(x)$. If $f$ is convex on $\C^\dagger$, then every such cluster point minimizes $f$ over $\C^\dagger$.
\end{corollary}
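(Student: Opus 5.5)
The plan is to reduce the corollary to \thmref{thm:inconsistent_indicator_subsequential} by producing, for the sequence indexed by $N$, the two ingredients required there: $\beta_{\kmin}\to0$ and $\gap^{\beta_{\kmin}}(x_{\kmin})\to0$. The first is immediate. Since $\kmin\geq\lfloor N/2\rfloor$ and the schedule is nonincreasing,
\[
\beta_{\kmin}\leq\beta_0(\lfloor N/2\rfloor+1)^{-q}\leq 2^q\beta_0N^{-q}\to0 .
\]
For the second, I will repeat the argument of \corref{cor:last_half_best_iterate} with general $(p,q)$. The minimum over the last half is at most twice the full average, so \thmref{thm:rates} gives
\[
\gap^{\beta_{\kmin}}(x_{\kmin})\leq 2C_{p,q}N^{-\min\{p-q,1-p-q\}}\to0 .
\]
\thmref{thm:rates} only uses \assref{ass:g}\ref{ass:g=indicator} without any consistency requirement; $B_\prox=e_{\D,T(\C)}$ is finite by compactness. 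So this bound is valid under \assref{ass:indicator_inconsistent}.

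The one subtlety is that \thmref{thm:inconsistent_indicator_subsequential} assumes the sequence $u_j$ converges, whereas $x_{\kmin}$ need not. I will therefore argue by subsequences. Set $u_N:=x_{\kmin}$ and $\alpha_N:=\beta_{\kmin}$ (with $\kmin$ depending on $N$). Suppose one of the three conclusions failed, for instance $\dist_{\C^\dagger}(u_N)\geq\varepsilon$ along some subsequence. By compactness of $\C$, extract a further convergent subsequence. Along it, $\alpha_N\to0$ and $\gap^{\alpha_N}(u_N)\to0$ still hold, so \thmref{thm:inconsistent_indicator_subsequential} applies and gives $\dist_{\C^\dagger}(u_N)\to0$, a contradiction. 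The same argument handles $\dist_{\D}(Tu_N)\to\delta$ and \eqref{eq:inconsistent_last_half_secondary_convergence}.

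Alternatively, the first two limits can be obtained quantitatively. \lemref{lem:inconsistent_indicator_gap_transfer} gives $\dist_{\D}^2(Tu_N)-\delta^2\leq2\alpha_N(\gap^{\alpha_N}(u_N)+\fconstant\dc)\to0$ directly. For \eqref{eq:inconsistent_last_half_secondary_convergence}, the upper bound $\leq\gap^{\alpha_N}(u_N)$ follows from \eqref{eq:inconsistent_secondary_one_sided_bound}. The lower bound is $-\fconstant\dist_{\C^\dagger}(u_N)$, and $\dist_{\C^\dagger}(u_N)\to0$ is the part that needs the compactness and contradiction argument above.

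Finally, take any cluster point $\ubar$ of $(x_{\kmin})_N$ and the subsequence converging to it. That subsequence is a convergent smoothed gap-vanishing sequence, so \thmref{thm:inconsistent_indicator_subsequential} yields $\ubar\in\C^\dagger$, stationarity $0\in\nabla f(\ubar)+N_{\C^\dagger}(\ubar)$, and minimality of $\ubar$ for $f$ over $\C^\dagger$ when $f$ is convex there. The main obstacle is only this bookkeeping around the non-convergence of the full sequence and the extraction of subsequences; no new estimates are needed.
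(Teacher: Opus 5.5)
Your proposal is correct and follows the same route as the paper, which combines $\gap^{\beta_{\kmin}}(x_{\kmin})\to0$ and $\beta_{\kmin}\to0$ with \thmref{thm:inconsistent_indicator_subsequential} applied to $u_N=x_{\kmin}$, $\alpha_N=\beta_{\kmin}$. Your version is more careful than the paper's two-line argument in two places: you rederive the last-half bound for general $(p,q)$, whereas the cited \corref{cor:last_half_best_iterate} is stated only for $p=1/2$, $q=1/4$; and you supply the subsequence-extraction step, which is needed because the theorem assumes $u_j$ converges.
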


\begin{proof}
By \corref{cor:last_half_best_iterate}, $\gap^{\beta_{\kmin}}(x_{\kmin})\to0$. Since $\kmin\geq\lfloor N/2\rfloor$ and $\beta_k\to0$, we also have $\beta_{\kmin}\to0$. The result follows from \thmref{thm:inconsistent_indicator_subsequential} applied with $u_N=x_{\kmin}$ and $\alpha_N=\beta_{\kmin}$.
\end{proof}

\begin{remark}
Under \assref{ass:indicator_inconsistent}, \FRAMES\ cannot generate a feasible limit point for the original problem \eqref{P}, because no such point exists. The conclusion above is therefore a best-approximation statement: smoothed gap-vanishing sequences approach the closest-point set $\C^\dagger$, and their cluster points are stationary for $f$ restricted to that set.
\end{remark}

\subsection{The Lipschitz Weakly Convex Case}

We now suppose that \assref{ass:g}\ref{ass:g=lipschitz} holds, making \eqref{P} the constrained analog of the setting considered in \cite{bohm2021variable} (because of $\C$). In this case, the smoothed gradient
\begin{equation*}
    \xi_\beta(x):=\nabla g^\beta(Tx)
\end{equation*}
belongs to $\partial g(\prox_{\beta g}(Tx))$, not necessarily to $\partial g(Tx)$. This prevents the smoothed gap from being directly identified with the nonsmooth gap, $\gap(x;\xi)$. The following assumption gives a finite-time transfer under an additional lifting assumption, which is similar to the surjectivity assumption made in \cite{bohm2021variable}.

\begin{assumption}[Proximal lift]
\label{ass:prox_lift}
There exists a constant $M_{\mathrm{lift}}>0$ such that, for every $x\in\C$ and every $\beta\in\left]0,\rho^{-1}\right[$, there exists $z=z(x,\beta)\in\C$ satisfying
\begin{equation*}
    Tz=\prox_{\beta g}(Tx)
\end{equation*}
and
\begin{equation*}
    \norm{z-x}{2}
    \leq
    M_{\mathrm{lift}}\norm{Tz-Tx}{2}.
\end{equation*}
\end{assumption}
Before continuing, we emphasize that \assref{ass:prox_lift} is used only for the finite-time gap-transfer bounds in \lemref{lem:lipschitz_gap_transfer} and \thmref{thm:nonsmooth_gap_rate}. The qualitative subsequential stationarity results in \thmref{thm:lipschitz_subsequential_stationarity} and \corref{cor:stationary_cluster_points} do not require it.

\begin{remark}\label{rmk:proxlift-examples}
\assref{ass:prox_lift} asks that the proximal point $\prox_{\beta g}(Tx)$ can be lifted back into $\C$ through some $z\in\C$ satisfying
\begin{equation*}
    Tz=\prox_{\beta g}(Tx)
    \qquad\text{and}\qquad
    \norm{z-x}{2}\leq M_{\mathrm{lift}}\norm{Tz-Tx}{2}.
\end{equation*}
This is an additional restriction on the triple $(g,T,\C)$ and does not hold in general. We mention two simple settings where it is satisfied.

\begin{enumerate}
    \item \emph{Identity operator with componentwise-shrinking prox.}
    Let $T=\Id$ and let $\C=\{x:\norm{x}{p}\leq\tau\}$ be an $\ell^p$-norm ball for some $p\in[1,\infty]$. If $\prox_{\beta g}$ acts componentwise and satisfies
    \begin{equation*}
        |\prox_{\beta g}(y)_i|\leq |y_i|
        \qquad
        \forall i,
    \end{equation*}
    then $z:=\prox_{\beta g}(x)$ satisfies $\norm{z}{p}\leq\norm{x}{p}\leq\tau$ so $z\in\C$. This holds for $g=\lambda\norm{\cdot}{1}$, whose proximal operator is soft-thresholding, and for the usual separable SCAD and MCP penalties, whose proximal maps shrink each coordinate toward zero. In this case one can take $M_{\mathrm{lift}}=1$.

    \item \emph{Invertible $T$ with prox-invariant image constraint.}
    Suppose that $T$ is invertible and that $\C=T^{-1}(\mathcal{K})$ for some set $\mathcal{K}\subset\R^m$ satisfying
    \begin{equation*}
        \prox_{\beta g}(\mathcal{K})\subset \mathcal{K}
        \qquad
        \forall \beta\in\left]0,\rho^{-1}\right[.
    \end{equation*}
    Then, for $x\in\C$, we have $Tx\in\mathcal{K}$ and hence $\prox_{\beta g}(Tx)\in\mathcal{K}$. Defining $z:=T^{-1}\prox_{\beta g}(Tx)$ gives $z\in\C$ and $Tz=\prox_{\beta g}(Tx)$. Moreover,
    \begin{equation*}
        \norm{z-x}{2}
        =
        \norm{T^{-1}\left(\prox_{\beta g}(Tx)-Tx\right)}{2}
        \leq
        \norm{T^{-1}}{\op}
        \norm{\prox_{\beta g}(Tx)-Tx}{2}.
    \end{equation*}
    Thus \assref{ass:prox_lift} holds with $M_{\mathrm{lift}}=\norm{T^{-1}}{\op}$.
    A simple way to guarantee prox-invariance is to take $\mathcal{K}$ to be a sublevel set of $g$, i.e., $\mathcal{K}=\{y:g(y)\leq \alpha\}$, since the definition of the proximal point gives
    \begin{equation*}
        g(\prox_{\beta g}(y))
        +
        \frac{1}{2\beta}\norm{\prox_{\beta g}(y)-y}{2}^2
        \leq
        g(y)
    \end{equation*}
    and therefore $g(\prox_{\beta g}(y))\leq g(y)$.
\end{enumerate}
\end{remark}

\begin{lemma}[Lipschitz gap-transfer]
\label{lem:lipschitz_gap_transfer}
Suppose \assref{ass:f}, \ref{ass:T}, \ref{ass:C}, \ref{ass:g}\ref{ass:g=lipschitz} and \assref{ass:prox_lift} all hold. Let $x\in\C$, $\beta\in\left]0,\rho^{-1}\right[$, and choose $z=z(x,\beta)\in\C$ as in \assref{ass:prox_lift}. Define $\xi_\beta(x):=\nabla g^\beta(Tx)$ and recall the subgradient gap given in \defref{def:nonsmoothGap}. Then $\xi_\beta(x)\in\partial g(Tz)$ and, denoting $C_{\mathrm{lift}}
    :=
    M_{\mathrm{lift}}L_g
    (
        \fconstant
        +
        L_g\norm{T^*}{\op}
        +
        \gradfconstant\dc
    )$,
\begin{equation}
\label{eq:lipschitz_gap_transfer}
    \gap(z;\xi_\beta(x))
    \leq
    \gap^\beta(x)
    +
    C_{\mathrm{lift}}\beta.
\end{equation}
\end{lemma}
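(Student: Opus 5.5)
First, membership: by the proximal optimality condition (as used in Proposition~\ref{prop:moreauProperties}), $\xi_\beta(x)=\beta^{-1}(Tx-\prox_{\beta g}(Tx))\in\partial g(\prox_{\beta g}(Tx))=\partial g(Tz)$, since $Tz=\prox_{\beta g}(Tx)$ by Assumption~\ref{ass:prox_lift}. Then for the gap estimate I compare, for each fixed $s\in\C$, the quantity $\ip{\nabla f(z)+T^*\xi_\beta(x),z-s}{}$ with $\ip{\nabla f(x)+T^*\xi_\beta(x),x-s}{}\le\gap^\beta(x)$. The difference splits as
\begin{equation*}
\ip{\nabla f(z)-\nabla f(x),z-s}{}+\ip{\nabla f(x)+T^*\xi_\beta(x),z-x}{}.
\end{equation*}

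The first term I bound by Cauchy--Schwarz, Lipschitz continuity of $\nabla f$ on $\C$ (both $x,z\in\C$) and the diameter: it is at most $\gradfconstant\dc\norm{z-x}{2}$. For the second term, $\norm{\nabla f(x)}{2}\le\fconstant$ and $\norm{T^*\xi_\beta(x)}{2}\le\norm{T^*}{\op}L_g$ by Proposition~\ref{prop:moreauProperties}, so it is at most $(\fconstant+L_g\norm{T^*}{\op})\norm{z-x}{2}$. Finally, Assumption~\ref{ass:prox_lift} and Proposition~\ref{prop:moreauDisplacementBound} give
\begin{equation*}
\norm{z-x}{2}\le M_{\mathrm{lift}}\norm{Tz-Tx}{2}=M_{\mathrm{lift}}\norm{\prox_{\beta g}(Tx)-Tx}{2}\le M_{\mathrm{lift}}L_g\beta.
\end{equation*}
Combining these bounds and taking the maximum over $s\in\C$ yields $\gap(z;\xi_\beta(x))\le\gap^\beta(x)+M_{\mathrm{lift}}L_g(\fconstant+L_g\norm{T^*}{\op}+\gradfconstant\dc)\beta$, which is exactly $\gap^\beta(x)+C_{\mathrm{lift}}\beta$.

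There is no serious obstacle. The only point needing care is that the first term requires $z\in\C$ so that the Lipschitz bound on $\nabla f$ (valid only on $\C$) applies; this is guaranteed by the lift assumption. The constant then matches $C_{\mathrm{lift}}$ exactly.
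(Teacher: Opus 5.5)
Your proposal is correct and follows the paper's proof essentially step for step. You use the same splitting of $\langle \nabla f(z)+T^*\xi_\beta(x),z-s\rangle$ into the smoothed-gap term plus $\langle \nabla f(x)+T^*\xi_\beta(x),z-x\rangle$ and $\langle \nabla f(z)-\nabla f(x),z-s\rangle$, together with the same bounds (gradient bounds, Lipschitz continuity of $\nabla f$ on $\C$, and $\|z-x\|_2\leq M_{\mathrm{lift}}L_g\beta$), and you obtain exactly $C_{\mathrm{lift}}$.
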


\begin{proof}
Since $Tz=\prox_{\beta g}(Tx)$, the proximal optimality condition gives
\begin{equation*}
    \xi_\beta(x)
    =
    \frac{Tx-\prox_{\beta g}(Tx)}{\beta}
    \in
    \partial g(\prox_{\beta g}(Tx))
    =
    \partial g(Tz).
\end{equation*}
Therefore $\gap(z;\xi_\beta(x))$ is well-defined.

For any $s\in\C$, add and subtract $x$ and $\nabla f(x)$ to obtain
\begin{equation*}
    \begin{aligned}
    \ip{\nabla f(z)+T^*\xi_\beta(x),z-s}{}
    &=
    \ip{\nabla f(x)+T^*\xi_\beta(x),x-s}{}\\
    &\quad\quad+
    \ip{\nabla f(x)+T^*\xi_\beta(x),z-x}{}\\
    &\quad\quad+
    \ip{\nabla f(z)-\nabla f(x),z-s}{}.
    \end{aligned}
\end{equation*}
Taking the maximum over $s\in\C$ gives
\begin{equation}
\label{e:lip-gt1}
    \begin{aligned}
    \gap(z;\xi_\beta(x))
    &\leq
    \gap^\beta(x)
    +
    \norm{\nabla f(x)+T^*\xi_\beta(x)}{2}\norm{z-x}{2}
    +
    \gradfconstant\norm{z-x}{2}\dc.
    \end{aligned}
\end{equation}
By the boundedness of $\nabla f$ on $\C$ and \propref{prop:moreauProperties},
\begin{equation}
\label{e:lip-gt2}
    \norm{\nabla f(x)+T^*\xi_\beta(x)}{2}
    \leq
    \fconstant+L_g\norm{T^*}{\op}.
\end{equation}
Moreover, by \assref{ass:prox_lift} and \propref{prop:moreauDisplacementBound},
\begin{equation}
\label{e:lip-gt3}
    \norm{z-x}{2}
    \leq
    M_{\mathrm{lift}}\norm{\prox_{\beta g}(Tx)-Tx}{2}
    \leq
    M_{\mathrm{lift}}\beta L_g.
\end{equation}
Substituting \eqref{e:lip-gt2} and \eqref{e:lip-gt3} into \eqref{e:lip-gt1} yields \eqref{eq:lipschitz_gap_transfer}.
\end{proof}

\begin{theorem}[Lipschitz nonsmooth gap rate]
\label{thm:nonsmooth_gap_rate}
Suppose \assref{ass:f}, \ref{ass:T}, \ref{ass:C}, \ref{ass:g}\ref{ass:g=lipschitz}, \ref{ass:gamma}, \ref{ass:beta}, and \assref{ass:prox_lift} hold and let $\seq{x_k}$ be generated by \FRAMES\ with
\begin{equation*}
    \gamma_k=(k+1)^{-1/2},
    \qquad
    \beta_k=\beta_0(k+1)^{-1/4},
\end{equation*}
where $\beta_0\in\left]0,\rho^{-1}\right[$. For each $k\in\N$, let $z_k:=z(x_k,\beta_k)$ be chosen as in \assref{ass:prox_lift}, set $\xi_k:=\nabla g^{\beta_k}(Tx_k)$, and recall the subgradient gap given in \defref{def:nonsmoothGap}.
Then there exists a constant $C_{\mathrm{ns}}>0$ such that, for every $N\in\N^*$,
\begin{equation*}
    0
    \leq
    \frac{1}{N}\sum_{k=0}^{N-1}\gap(z_k;\xi_k)
    \leq
    C_{\mathrm{ns}}N^{-1/4}.
\end{equation*}
Consequently,
\begin{equation*}
    0
    \leq
    \min_{0\leq k\leq N-1}\gap(z_k;\xi_k)
    \leq
    C_{\mathrm{ns}}N^{-1/4}.
\end{equation*}
Moreover, for last-half best iterate with index $\kmin\in\argmin\limits_{\lfloor N/2\rfloor\leq k \leq N-1}\gap^{\beta_k}(\xk)$ as in \corref{cor:last_half_best_iterate}, it holds
\begin{equation*}
    \gap(z_{\kmin};\xi_{\kmin})
    =
    \mathcal{O}(N^{-1/4}).
\end{equation*}
\end{theorem}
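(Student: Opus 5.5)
The plan is to combine the pointwise gap-transfer estimate of \lemref{lem:lipschitz_gap_transfer} with the averaged smoothed-gap rate of \thmref{thm:rates} at $p=1/2$, $q=1/4$.

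\textbf{Nonnegativity.} For each $k$, apply \lemref{lem:lipschitz_gap_transfer} with $x=x_k\in\C$ and $\beta=\beta_k\in\left]0,\rho^{-1}\right[$; the hypotheses hold since $\beta_k\leq\beta_0<\rho^{-1}$. This gives $\xi_k\in\partial g(Tz_k)$ with $z_k\in\C$, so $\gap(z_k;\xi_k)$ is well defined. Taking $s=z_k$ in \defref{def:nonsmoothGap} shows $\gap(z_k;\xi_k)\geq 0$, which is the lower bound.

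\textbf{Pointwise and averaged bounds.} The lemma also yields
\begin{equation*}
\gap(z_k;\xi_k)\leq \gap^{\beta_k}(x_k)+C_{\mathrm{lift}}\beta_k .
\end{equation*}
Average over $k=0,\dots,N-1$. The first term is bounded by $CN^{-1/4}$ from \thmref{thm:rates}. For the second term, use the integral test as in \eqref{eq:pq_power_sum_p}:
\begin{equation*}
\frac{1}{N}\sum_{k=0}^{N-1}\beta_0(k+1)^{-1/4}\leq \frac{\beta_0}{N}\cdot\frac{N^{3/4}}{3/4}=\frac{4\beta_0}{3}N^{-1/4}.
\end{equation*}
Hence $C_{\mathrm{ns}}:=C+\tfrac{4}{3}C_{\mathrm{lift}}\beta_0$ works for the averaged bound. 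The min bound follows because a minimum of nonnegative numbers is at most their average.

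\textbf{Last-half best iterate.} The index $\kmin$ minimizes the \emph{smoothed} gap, not $\gap(z_k;\xi_k)$, so the min bound above does not apply to it directly. Instead, apply the pointwise transfer at $k=\kmin$ and invoke \corref{cor:last_half_best_iterate}, which gives $\gap^{\beta_{\kmin}}(x_{\kmin})\leq 2CN^{-1/4}$ and $\beta_{\kmin}\leq 2^{1/4}\beta_0N^{-1/4}$. Then
\begin{equation*}
\gap(z_{\kmin};\xi_{\kmin})\leq \bigl(2C+2^{1/4}C_{\mathrm{lift}}\beta_0\bigr)N^{-1/4}\quad\text{for } N\geq 2 .
\end{equation*}

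\textbf{Main obstacle.} There is no real obstacle; the argument is bookkeeping. The only care needed is at the last step: the selection of $\kmin$ is based on $\gap^{\beta_k}(x_k)$, so the bound must go through the pointwise transfer at $\kmin$ rather than through the averaged or min estimates for $\gap(z_k;\xi_k)$.
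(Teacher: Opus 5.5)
Your proposal is correct and follows essentially the same route as the paper. You apply Lemma~\ref{lem:lipschitz_gap_transfer} pointwise at $(x_k,\beta_k)$, average using Theorem~\ref{thm:rates} with $p=1/2$, $q=1/4$ together with a power-sum bound on $\sum_k\beta_k$, and handle the last-half index through the pointwise transfer combined with Corollary~\ref{cor:last_half_best_iterate}; the only differences are cosmetic (your bound $\sum_{k=0}^{N-1}\beta_k\leq\tfrac{4}{3}\beta_0N^{3/4}$ is slightly sharper than the paper's $\beta_0(1+\tfrac{4}{3}N^{3/4})$, and you make the constants explicit).
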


\begin{proof}
By \lemref{lem:lipschitz_gap_transfer}, for every $k\in\N$,
\begin{equation*}
    \gap(z_k;\xi_k)
    \leq
    \gap^{\beta_k}(x_k)
    +
    C_{\mathrm{lift}}\beta_k.
\end{equation*}
Averaging this inequality and using \thmref{thm:rates} gives
\begin{equation*}
    \frac{1}{N}\sum_{k=0}^{N-1}\gap(z_k;\xi_k)
    \leq
    CN^{-1/4}
    +
    \frac{C_{\mathrm{lift}}}{N}
    \sum_{k=0}^{N-1}\beta_k.
\end{equation*}
Since $\beta_k=\beta_0(k+1)^{-1/4}$,
\begin{equation*}
    \sum_{k=0}^{N-1}\beta_k
    \leq
    \beta_0\left(1+\frac{4}{3}N^{3/4}\right).
\end{equation*}
Thus the average nonsmooth gap is $\mathcal{O}(N^{-1/4})$. The minimum bound follows because the gaps are nonnegative. The final statement for the last-half best iterate corresponding to $\kmin$ follows by combining \lemref{lem:lipschitz_gap_transfer} with \corref{cor:last_half_best_iterate}.
\end{proof}

\begin{remark}[Optimal balance among power schedules]
\label{rmk:optimal_power_balance_nonsmooth}
We now return to the power schedules from \thmref{thm:rates}. For $\gamma_k=(k+1)^{-p}$ and $\beta_k=\beta_0(k+1)^{-q}$ with $0<q<p$ and $p+q<1$, the smoothed gap exponent is $\min\{p-q,\ 1-p-q\}$.
The finite-time transfer estimates above show there is an additional smoothing error $\beta_k=\mathcal{O}(k^{-q})$ to account for. Thus the exponent governing the final nonsmooth stationarity certificates is
\begin{equation}\label{eq:nonsmooth_rate_exponent}
    \min\{q,\min\{p-q,\ 1-p-q\}\}
    =
    \min\{q,\;p-q,\;1-p-q\}.
\end{equation}
In the Lipschitz weakly convex case, this follows directly from \lemref{lem:lipschitz_gap_transfer}; in the indicator case, the signed gap is controlled by the same bottleneck through \lemref{lem:indicator_finite_time_transfer}. Maximizing \eqref{eq:nonsmooth_rate_exponent} over $0<q<p<1$ leads directly to the choices $p=1/2$ and $q=1/4$.
\end{remark}



\section{Applications and Numerical Experiments}\label{sec:experiments}

In this section, we apply \FRAMES\ to several problems that illustrate its flexibility across assumptions on $g$. \secref{sec:exp-splitting} considers a nonconvex splitting problem where the nonsmooth gap is computable in closed form, providing a direct verification of the convergence theory. \secref{sec:exp-nmf} and \secref{sec:exp-trend} share the same smooth objective and constraint set but differ in the nonsmooth term: the first uses a nonnegativity indicator (\assref{ass:g}\ref{ass:g=indicator}), the second uses trend filtering with weakly convex penalties (\assref{ass:g}\ref{ass:g=lipschitz}). Finally, in \secref{sec:exp-inconsistent-linf} we explore what happens for inconsistent problems where $\D\cap T(\C) = \varnothing$ as in \assref{ass:indicator_inconsistent}.
\subsection{Nonconvex Splitting over \texorpdfstring{$\ell_1$}{ℓ₁} Balls}\label{sec:exp-splitting}

We construct a problem fitting into the splitting framework of \cite{yurtsever2018conditional,silveti2020generalized,woodstock2025splitting} and for which we can compute the signed gap in closed form. We first consider
\begin{equation}\label{eq:splitting}
  \min_{x \in \C_1 \cap \C_2}
  f(x) = \tfrac{1}{2}x^\top Q x - b^\top x,
\end{equation}
where $Q \in \R^{n \times n}$ is symmetric and indefinite (so that $f$ is  nonconvex but satisfies \assref{ass:f}), $b \in \R^n$, $\C_1 = \{x : \|x - e_1\|_1 \leq 2\}$, and $\C_2 = \{x : \|x + e_1\|_1 \leq 2\}$ where $e_1$ denotes the vector with $1$ in the first entry and $0$ everywhere else. One can verify that $\C_1 \cap \C_2 = B_1 := \{x : \|x\|_1 \leq 1\}$.

To obtain an instance of \eqref{P}, we lift to the product space $\vx = (x_1, x_2) \in \C_1 \times \C_2$ and enforce consensus among components
\begin{equation*}
  \min_{(x_1, x_2) \in \C_1 \times \C_2}
  \underbrace{f(\tfrac{x_1 + x_2}{2})}_{f(\vx)}
  +
  \underbrace{\iota_{\{0\}}(x_1 - x_2)}_{g(T\vx)},
\end{equation*}
with $T\vx = x_1 - x_2$, $\D = \{0\}$, $\xbar = \tfrac{x_1+x_2}{2}$, and $\prox_{\beta g}(z) = P_{\{0\}}(z) = 0$. This satisfies \assref{ass:T}, \ref{ass:C}, and \ref{ass:g}\ref{ass:g=indicator}\ref{ass:g=indicator_nonempty_ri}. Moreover, the LMO is separable over the blocks,
\begin{equation*}
    \lmo_{\C_1\times\C_2}((G_1, G_2)) = (\lmo_{\C_1}(G_1),\ \lmo_{\C_2}(G_2)),
\end{equation*}
and easily computed using the vector sign.

\paragraph{Computable Nonsmooth Frank-Wolfe Gap}
Since $\C_1 \cap \C_2 = B_1$ is known, the feasible set of the original problem is $\tilde\C = \{(s,s) : s \in B_1\}$. The nonsmooth gap in this setting corresponds to a signed gap as given in \defref{def:indicatorGap}; at $\vx = (x_1, x_2)$ is $\widetilde{\mathrm{gap}}(\vx) = \max_{s \in \tilde\C} \ip{\nabla f(\vx), \vx - s}{}$. Since $\nabla f(\vx) = \tfrac{1}{2}(Q\xbar - b,\; Q\xbar - b)$, substituting $s = (s', s')$ with $s' \in B_1$ yields
\begin{align*}
  \widetilde{\mathrm{gap}}(\vx)
  &= \max_{s' \in B_1}
    \ip{Q\xbar - b, \xbar - s'}{}.
\end{align*}
Denoting $q:=Q\xbar-b$, this is the Frank-Wolfe gap of the original problem~\eqref{eq:splitting} evaluated at $\xbar$:
\begin{equation*}
  \widetilde{\mathrm{gap}}(\vx)
  = \langle q,\, \xbar - \lmo_{B_1}(q) \rangle.
\end{equation*}
Since $\lmo_{B_1}(q)$ returns $-\mathrm{sign}(q_{j^\star}) e_{j^\star}$ where $j^\star \in \argmax_j |q_j|$, we have $\langle q, -\lmo_{B_1}(q)\rangle = \|q\|_\infty$, giving
\begin{equation}\label{eq:ns-gap-splitting}
  \widetilde{\mathrm{gap}}(\vx)
  = \langle Q\xbar - b,\, \xbar \rangle + \|Q\xbar - b\|_\infty,
\end{equation}
which is easily computed in closed-form using the available variables while \FRAMES\ runs.

\paragraph{Data Generation}
We generate $Q = V\Lambda V^\top$ with $V$ a random orthogonal matrix and $\Lambda = \diag(\lambda_1, \ldots, \lambda_n)$ with eigenvalues drawn uniformly from $[0.5, 5]$ in magnitude, roughly $30\%$ of them negated. The vector $b$ is sampled randomly with components following the standard normal distribution. We use $n = 50$, and $N = 50,000$. The number of iterations $N$ is intentionally large to demonstrate the difference in the schedules.

\paragraph{Results}
Figure~\ref{fig:splitting} displays the convergence using \FRAMES\ with $\gamma_k=1/(k+1)^{1/2}$ and various $\beta_0 \in \{0.25, 1/L_{\nabla f}, 0.5, 1, 2, 4\}$, where $L_{\nabla f} = \|Q\|_{\mathrm{op}}/2=2.49$. We also compare the power schedule $\beta_k=\beta_0/(k+1)^{1/4}$ against the log schedule $\beta_k = \beta_0/\log(k+2)$ for each value of $\beta_0$. This is shown in Figure~\ref{fig:splitting}, where dashed lines represent the log schedule and solid lines represent the power schedule.

\begin{figure}[H]
  \centering
  \includegraphics[width=\textwidth]{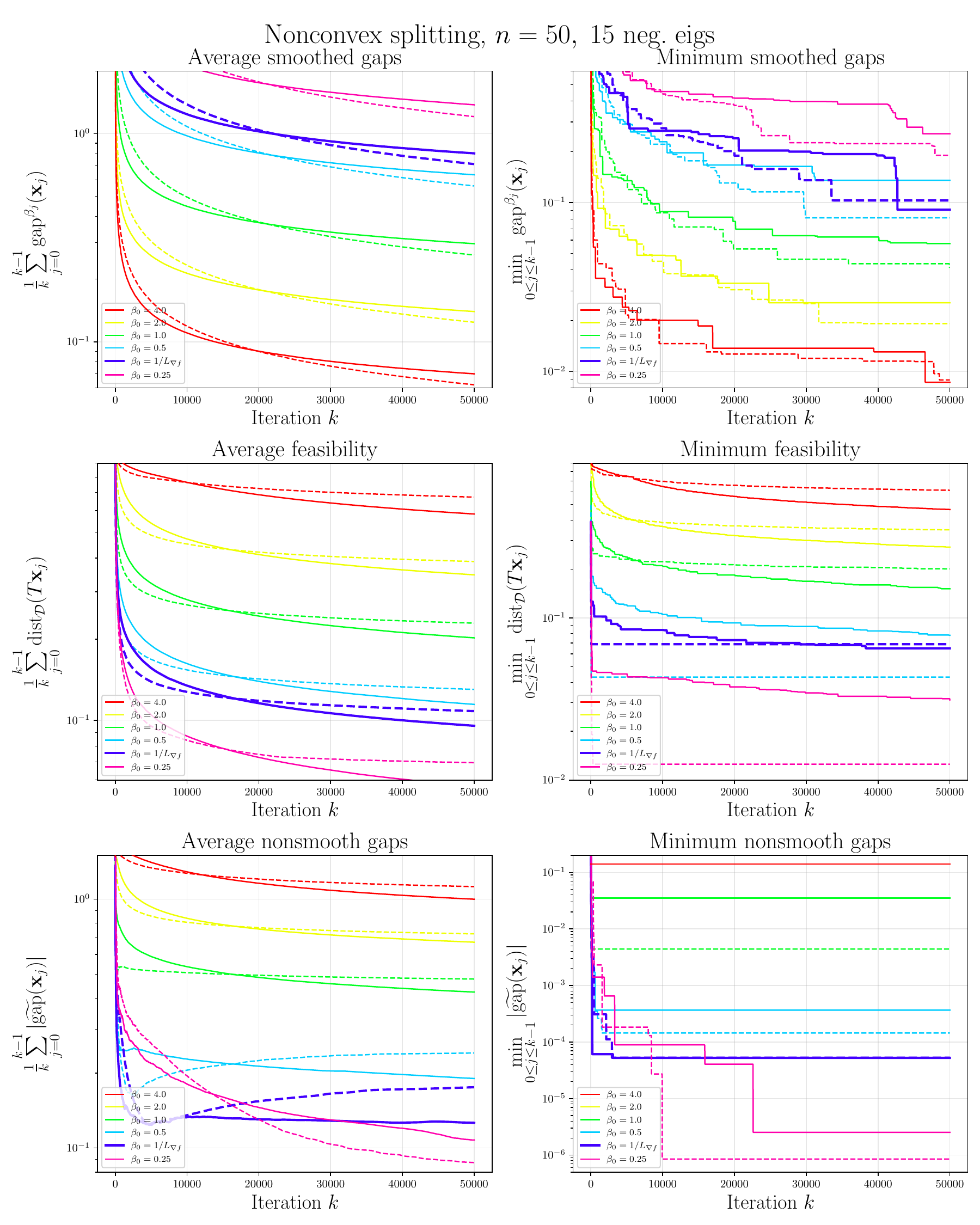}
    \caption{
        Solid curves use the power schedule $\beta_k=\beta_0/(k+1)^{1/4}$ and dashed curves use the (natural) log schedule $\beta_k=\beta_0/\log(k+2)$, as suggested by prior work \cite{woodstock2025splitting,halbey2025efficient}; both use the step size $\gamma_k = (k+1)^{-1/2}$. In the bottom right plot, the curves for $\beta_0=4$ and $\beta_0=2$ overlap.
    }
  \label{fig:splitting}
\end{figure}

\begin{itemize}
  \item \emph{Smoothed gaps.}
    The running minimum and average of the smoothed gaps decrease at the predicted $\mathcal{O}(k^{-1/4})$ rate for all values of $\beta_0$, confirming \thmref{thm:rates} in the nonconvex setting.

  \item \emph{Feasibility.}
    The consensus violation $\|x_{1,k} - x_{2,k}\|^2$ decreases for all $\beta_0$, consistent with \lemref{lem:indicator_finite_time_transfer}. Setting $\beta_0$ very small produces a trajectory which reaches the lowest infeasibility. On the other hand, the smaller $\beta_0$ is, the slower the smoothed gaps converge for both schedules.

  \item \emph{Nonsmooth gap.}
    We plot $|\widetilde{\mathrm{gap}}(\vx_k)|$ in the right panel, testing the predictions of \thmref{thm:indicator_nonsmooth_gap_rate}. When $\xbar_k \notin B_1$, the nonsmooth gap~\eqref{eq:ns-gap-splitting} can be negative. Smaller $\beta_0$ seems to yield a smaller $|\widetilde{\mathrm{gap}}|$, with $\beta_0 = 0.25$ achieving the best values for both schedules. The log schedule produces a larger average and minimal $|\widetilde{\mathrm{gap}}|$ than the power schedule for most $\beta_0$ values, confirming that the slower smoothing decay is detrimental as predicted by \thmref{thm:indicator_nonsmooth_gap_rate} and Remark~\ref{rmk:log_is_slower}.
\end{itemize}

\subsection{Low-rank Matrix Factorization}\label{sec:exp-mf}

In this section, we consider two matrix factorization problems, focusing on nonnegative matrix factorization \cite{gillis2020nonnegative} in \secref{sec:exp-nmf} and matrix factorization with trend filtering \cite{wang2016trend} in \secref{sec:exp-trend}.
Both experiments consider the smooth nonconvex objective
\begin{equation*}
  f(U,V) = \tfrac{1}{2}\|UV^\top - X^\star\|_F^2,
\end{equation*}
where $X^\star \in \R^{m \times n}$ is a given matrix and $U \in \R^{m \times r}$, $V \in \R^{n \times r}$ are matrix variables.
The constraint set is the product of two spectral-norm balls,
\begin{equation*}
  \C
  =
  \bigl\{U\in\R^{m\times r}:\norm{U}{\mathrm{op}}\leq \tau_U\bigr\}
  \times
  \bigl\{V\in\R^{n\times r}:\norm{V}{\mathrm{op}}\leq \tau_V\bigr\},
\end{equation*}
so, given a gradient $(G_U,G_V)$, the LMO decomposes across the two factor blocks, i.e.,
\begin{equation}
\label{e:lmo-lr}
    \lmo_\C((G_U,G_V)) = \left(\lmo_{\{W\colon \norm{W}{\mathrm{op}}\}\leq\tau_U}(G_U),\quad \lmo_{\{W\colon \norm{W}{\mathrm{op}}\}\leq\tau_V}(G_V)\right).
\end{equation}
Since $f$ is a quartic polynomial in the entries of $(U,V)$, it is smooth but nonconvex; restricted to the convex compact constraint set $\C$, Assumptions~\ref{ass:f}, \ref{ass:T}, and \ref{ass:C} are satisfied.

For a spectral-norm ball $\{M : \|M\|_{\mathrm{op}} \leq \tau\}$, the LMO applied to a gradient $G$ returns $-\tau \cdot \mathrm{msign}(G)$, where $\mathrm{msign}(G) = L_G R_G^\top$ is the matrix sign obtained from the reduced SVD, $G = L_G \Sigma_G R_G^\top$, by replacing all nonzero singular values with ones. The optimization variable is represented as a single vector $x = [\mathrm{vec}(U),\ \mathrm{vec}(V)] \in \R^{(m+n)r}$. The cost of computing \eqref{e:lmo-lr} using Newton-Schulz \cite{amsel2025polar} is cheap compared to the projection operator of $\C$.

\subsubsection{Nonnegativity Constraints}\label{sec:exp-nmf}

We consider the problem of factoring the matrix $X^\star$ into two rank $r=20$ factors $U$ and $V$ with nonnegative entries,
\begin{equation}\label{eq:nmf}
  \min_{\substack{\|U\|_{\mathrm{op}} \leq \tau_U \\
                   \|V\|_{\mathrm{op}} \leq \tau_V}}
  \frac{1}{2}\|UV^\top - X^\star\|_F^2
  \;+\;
  \iota_{\R_+^{m \times r} \times \R_+^{n \times r}}(U,V)
\end{equation}
which has many applications \cite{gillis2020nonnegative}. This is an instance of \eqref{P} with $T=\mathrm{Id}$ and $g=\iota_{\D}$, where
\begin{equation*}
    \D
    =
    \R_+^{m \times r}\times \R_+^{n \times r}
\end{equation*}
satisfies \assref{ass:g}\ref{ass:g=indicator}\ref{ass:g=indicator_nonempty_ri}.
Thus, for all $\beta>0$, $\prox_{\beta g}(x)=\proj_{\D}(x)=\max(x,0)$ where the maximum is applied componentwise.

\paragraph{Data Generation}
We generate nonnegative ground-truth factors $U^\star, V^\star$ with i.i.d.\ entries drawn as $|\mathcal{N}(0,1)|$, set $X^\star = U^\star (V^\star)^\top$, and fix the constraint radii as $\tau_U = 1.05\,\|U^\star\|_{\mathrm{op}}$, $\tau_V = 1.05\,\|V^\star\|_{\mathrm{op}}$, ensuring that the ground truth is strictly feasible in $\C$. Throughout, $m = n = 100$ and $N = 50,000$ iterations.

\begin{figure}[htb]
  \centering
  \includegraphics[width=\textwidth]{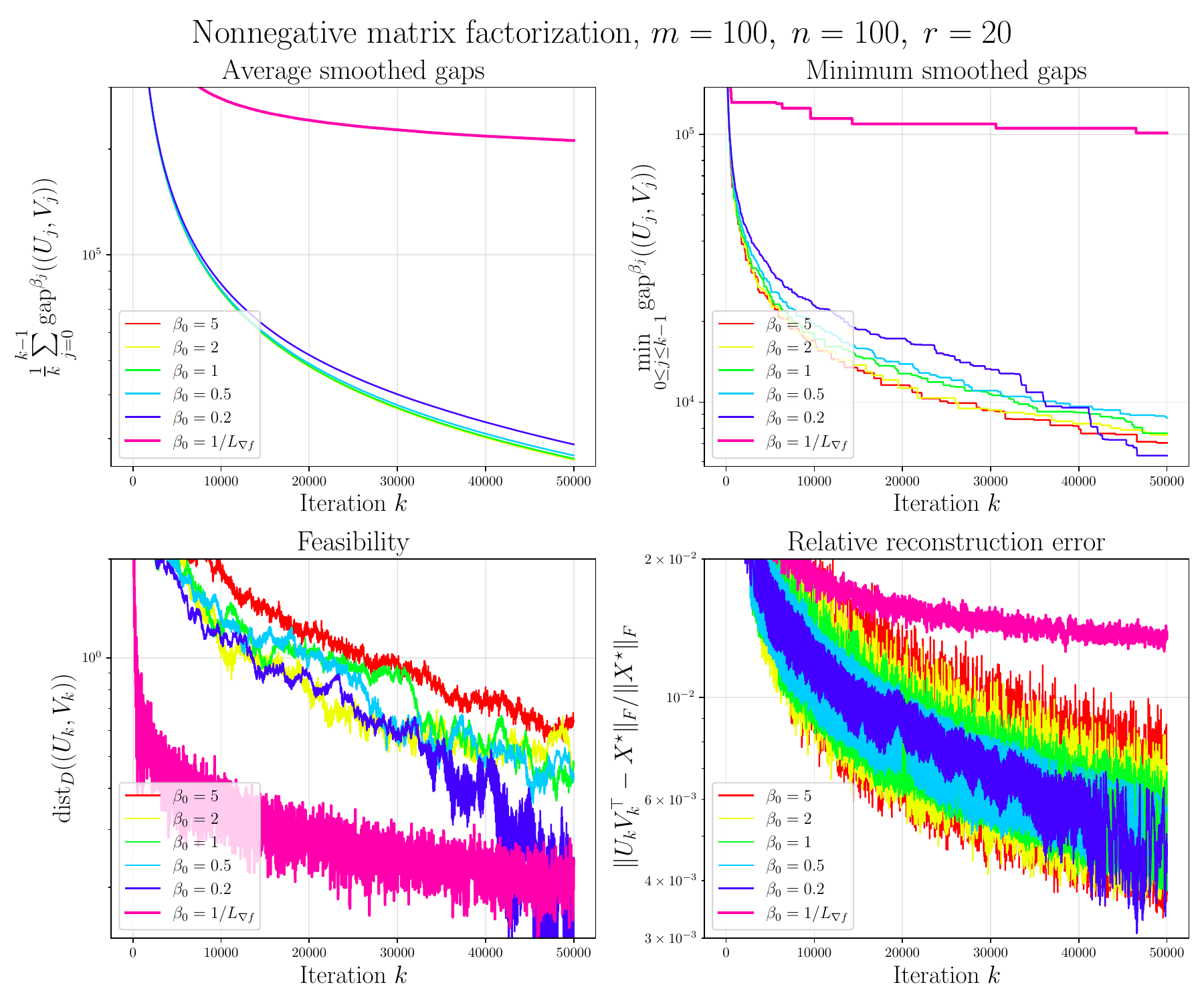}
  \caption{
    Convergence profiles for \FRAMES\ applied to nonnegative matrix factorization problem of Section~\ref{sec:exp-nmf} with $T=\mathrm{Id}$ and $g=\iota_{\D}$ for $\D=\R_+^{m\times r}\times\R_+^{n\times r}$.
  }
  \label{fig:nmf}
\end{figure}

\paragraph{Sensitivity to \texorpdfstring{$\beta_0$}{β0}}
Figure~\ref{fig:nmf} displays the behavior of \FRAMES\ across $\beta_0 \in \{1/L_{\nabla f},0.2,0.5,1,2,5\}$, where $L_{\nabla f}$ is estimated numerically to be $373.07$. We use $\gamma_k=(k+1)^{-1/2}$ and $\beta_k=\beta_0(k+1)^{-1/4}$. The value $\beta_0=1/L_{\nabla f}$ makes the initial Lipschitz constant of the Moreau-envelope term comparable to that of the smooth matrix-factorization loss.

\paragraph{Results}
The four panels in Figure~\ref{fig:nmf} show average smoothed gaps, minimum smoothed gaps, feasibility, and relative reconstruction error.
\begin{itemize}
  \item \emph{Smoothed gaps.}
    The average and running minimum of the smoothed gaps decrease as predicted in \thmref{thm:rates} for all plotted values of $\beta_0$.  Smaller values of $\beta_0$ place more weight on the Moreau envelope term and lead to larger smoothed gaps but smaller feasibility gaps, consistent with the dependence on $\beta_k^{-1}$ in \thmref{thm:rates} and \thmref{thm:indicator_nonsmooth_gap_rate}.

  \item \emph{Feasibility.}
    The feasibility panel plots $\dist_{\D}(x_k)$ for the nonnegative orthant. The smallest value, $\beta_0=1/L_{\nabla f}$, enforces nonnegativity most aggressively and gives the lowest infeasibility among the runs, while slowing down convergence of the smoothed gaps.

  \item \emph{Reconstruction error.}
    The relative error $\|U_kV_k^\top-X^\star\|_F/\|X^\star\|_F$ decreases for all values of $\beta_0$. We remark that smaller $\beta_0$ values give better feasibility and $\beta_0=0.2$ (one of the smallest $\beta_0$ values we tried) seems to perform the best. This illustrates the practical tradeoff between optimizing the smoothed objective and enforcing the nonsmooth constraint.
\end{itemize}

\subsubsection{Trend Filtering Regularization}\label{sec:exp-trend}

We now replace the nonnegativity constraint by a nonsmooth weakly convex regularizer acting on finite differences of the $U$ factor, a variant of what is commonly used in \emph{trend filtering} problems \cite{wang2016trend,kim2009ell_1,fan2022graph}. Let
$D_{\mathrm{row}}\in\R^{(m-1)\times m}$ be the first-order difference matrix,
i.e.,
\begin{equation*}
  (D_{\mathrm{row}}U)_{i,j}=U_{i+1,j}-U_{i,j}.
\end{equation*}
We consider the problem
\begin{equation}\label{eq:trend-mf}
  \min_{\substack{\|U\|_{\mathrm{op}} \leq \tau_U \\
                   \|V\|_{\mathrm{op}} \leq \tau_V}}
  \frac{1}{2}\|UV^\top-X^\star\|_F^2
  \;+\;
  \sum_{i,j}g_0\big((D_{\mathrm{row}}U)_{i,j}\big),
\end{equation}
which factors an observed matrix $X^\star$. This fits \eqref{P} with the linear map
\begin{equation*}
    T(U,V)=D_{\mathrm{row}}U,
\end{equation*}
and with $g$ being the separable sum of $g_0$ across the entries of $D_{\mathrm{row}}U$.  The effect of the regularizer is to promote piecewise-constant structure along the rows of the columns of $U$.

We compare two nonconvex penalties for $g_0$, each relying on two hyperparameters $(\lambda,a)$:
\begin{itemize}
    \item $g_0(t)=\mathrm{SCAD}_{\lambda,a}(t) = \begin{cases}\lambda|t| & \text{if }|t|\leq \lambda\\ \frac{-2a\lambda|t|+t^2+\lambda^2}{2(1-a)} & \text{if }\lambda<|t|\leq a\lambda\\ \frac{\lambda^2(a+1)}{2} & \text{if }|t|>a\lambda \end{cases}$\quad \cite{fan2001variable};
    \item $g_0(t)=\mathrm{MCP}_{\lambda,\gamma}(t)=\begin{cases}\lambda|t|-\frac{t^2}{2\gamma} & \text{if }|t|\leq \gamma\lambda\\ \frac{\gamma\lambda^2}{2} & \text{if }|t|>\gamma\lambda\end{cases}$\quad \cite{zhang2010nearly}.
\end{itemize}
Both penalties are Lipschitz and weakly convex, and both have closed-form proximity operators and formulas for $\rho$ \cite{fan2001variable,zhang2010nearly}. Hence this experiment falls under \assref{ass:g}\ref{ass:g=lipschitz}.

\begin{figure}[thb]
  \centering
  \includegraphics[width=\textwidth]{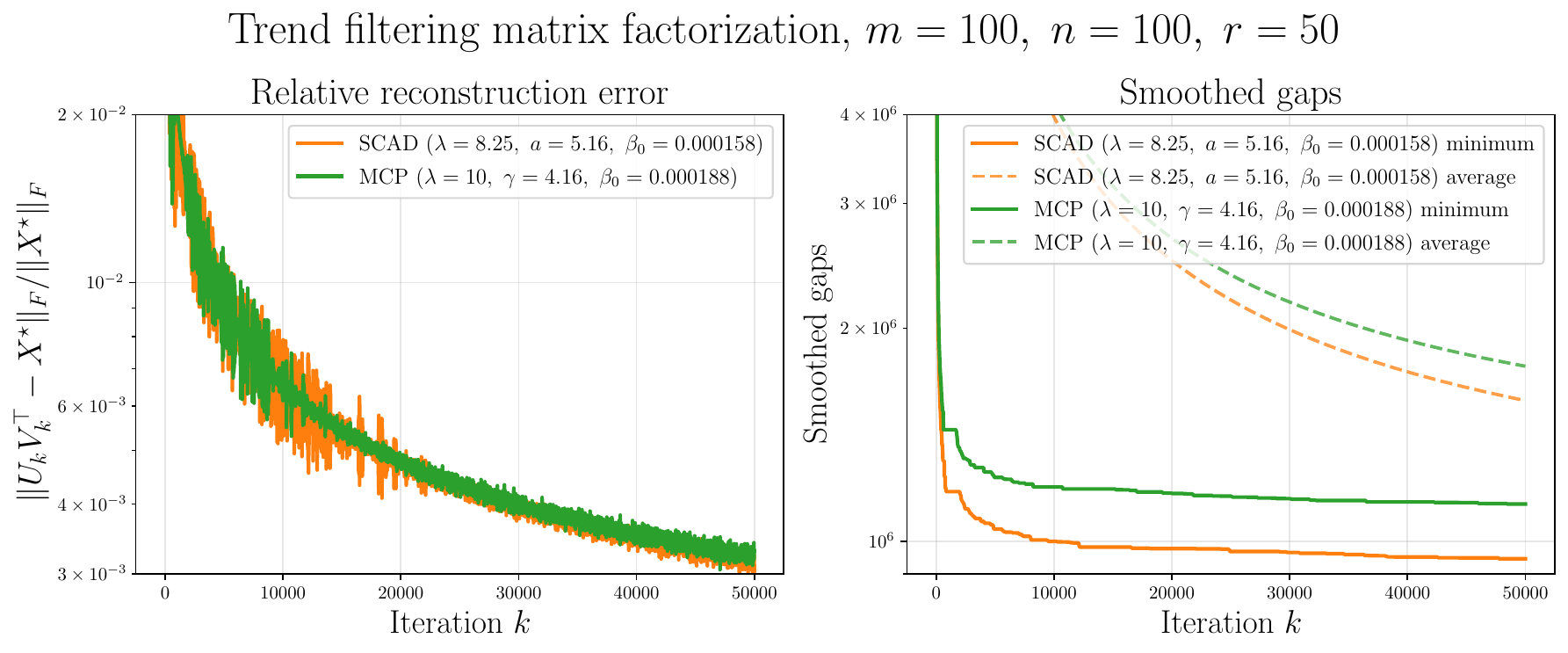}
  \caption{
    \FRAMES\ applied to trend-filtered matrix factorization problem of Section~\ref{sec:exp-trend} with tuned MCP and SCAD penalties.
  }
  \label{fig:trend_filtering}
\end{figure}

\paragraph{Data Generation}
The ground-truth factor $U^\star$ is generated with five constant blocks per column, with block heights drawn independently and block boundaries shared across columns.  Thus $D_{\mathrm{row}}U^\star$ is sparse.  The factor $V^\star$ has i.i.d.\ entries distributed as $|\mathcal{N}(0,1)|$, and $X^\star=U^\star(V^\star)^\top$.  In this experiment we use $m=n=100$, rank $r=50$, and $N=50,000$ iterations for the displayed trajectories.  The displayed SCAD run uses the hyperparameters
\begin{equation*}
    \mathrm{SCAD}:\quad
    \lambda=8.25,\quad a=5.16,\quad \beta_0=1.58\cdot 10^{-4},
\end{equation*}
and the displayed MCP run uses the hyperparameters
\begin{equation*}
    \mathrm{MCP}:\quad
    \lambda=10,\quad \gamma=4.16,\quad \beta_0=1.88\cdot 10^{-4},
\end{equation*}
which were found with a simple grid search for the plot; we have included them here for reproducibility but they are otherwise unimportant for this experiment.

\paragraph{Results}
Figure~\ref{fig:trend_filtering} compares the tuned SCAD and MCP runs.
\begin{itemize}
  \item \emph{Reconstruction error.}
    The left panel shows that both nonconvex trend-filtering penalties reduce the relative reconstruction error to the same range, with MCP slightly lower at the end of the run.

  \item \emph{Smoothed gaps.}
    The right panel plots both the running minimal smoothed gap up to iteration $k$ (solid curves) and the average of the smoothed gaps (dashed curves). The smoothed gaps decrease for both penalties at the rates predicted by \thmref{thm:rates}, with SCAD producing the smaller smoothed gaps on this instance.
\end{itemize}

\subsection{Inconsistent Problem over the
\texorpdfstring{$\ell_\infty$}{ℓ∞} Ball}
\label{sec:exp-inconsistent-linf}

\begin{wrapfigure}[19]{r}{0.5\textwidth}
    \vspace{-.875cm}
  \centering
  \includegraphics[width=0.5\textwidth]{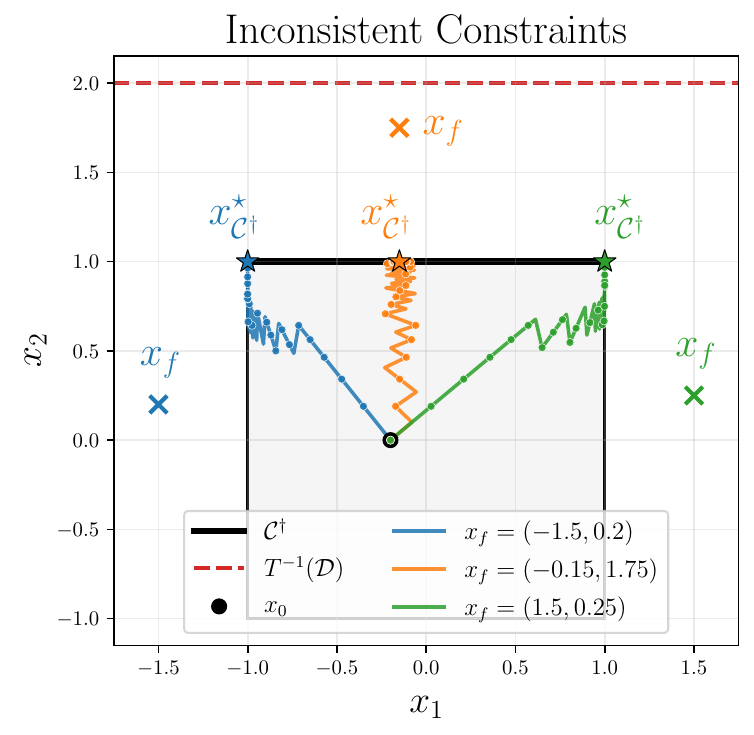}
  \caption{
    Trajectories for \FRAMES\ applied to the inconsistent problem with $\C=[-1,1]^2$, $T(x_1,x_2)=x_2$, and $\D=\{2\}$.  The dashed red line is $T^{-1}(\D)=\{(x_1,x_2):x_2=2\}$, while the black segment is the closest-point set $\C^\dagger$.  The three colored trajectories correspond to the three choices of $x_f$ in $f(x)=\norm{x-x_f}{}^2$.
  }
  \label{fig:inconsistent_linf_trajectories}
\end{wrapfigure}

We consider an example with inconsistent constraints in $\R^2$ (see \secref{subsec:inconsistent_indicator_case}). Let
\begin{equation*}
    \C=[-1,1]^2,\;
    T=\begin{bmatrix}0&1\end{bmatrix},\;
    \D=\{2\}.
\end{equation*}
Thus $T\colon\R^2\to\R$ is the coordinate projection $T(x_1,x_2)=x_2$.  Since $T(\C)=[-1,1]$, we have $T(\C)\cap\D=\varnothing$, and the indicator problem
\begin{equation*}
    \min_{x\in\C} f(x)+\iota_{\D}(Tx)
\end{equation*}
has no feasible point.  The closest-point set from
\eqref{eq:closest_point_set} is
\begin{equation}\label{eq:linf_closest_face}
    \begin{aligned}
        \C^\dagger
        &=
        \argmin_{x\in\C}\dist_{\D}(Tx)\\
        &=
        \{(x_1,1):x_1\in[-1,1]\},
    \end{aligned}
\end{equation}
and $\delta=\min_{x\in\C}\dist_{\D}(Tx)=1$.

\paragraph{Secondary Selection}
To illustrate the selection of a point in $\C^\dagger$, we use the smooth objective
\begin{equation*}
    f(x)=\norm{x-x_f}{}^2
\end{equation*}
with various choices of anchor point $x_f$: either $(-1.5,0.2)$, $(-0.15,1.75)$, or $(1.5,0.25)$. For this choice of $f$, the secondary solution is
\begin{equation*}
    x_{\C^\dagger}^\star
    \in
    \argmin_{s\in \C^\dagger}f(s)
    =
    \proj_{\C^\dagger}(x_f),
\end{equation*}
which gives $x_{\C^\dagger}^\star=(-1,1)$, $x_{\C^\dagger}^\star=(-0.15,1)$, and $x_{\C^\dagger}^\star=(1,1)$ for the three different choices of anchor point, respectively.

\paragraph{Algorithmic Details}
We run \FRAMES\ from $x_0=(-0.2,0)$ for $N=1500$ iterations with
$
    \beta_k=\beta_0(k+1)^{-1/4}$,
    $\beta_0=3$,
    and 
    $\gamma_k=(k+100)^{-1/2}$.
The shifted step size is used only to help visualize the initial part of the trajectory; the smoothing schedule is the same power schedule used throughout the experiments.  Using Theorem~\ref{thm:inconsistent_indicator_subsequential}, convergence for this schedule is also
    guaranteed using a similar argument to Corollary~\ref{cor:inconsistent_indicator_last_half}.

\paragraph{Results}
Figure~\ref{fig:inconsistent_linf_trajectories} shows that the iterates
approach $\C^\dagger$ and then select the point in $\C^\dagger$ minimizing the
smooth objective.  Figure~\ref{fig:inconsistent_linf_residuals} plots
$|\dist_{\D}(Tx_k)-\delta|$, which converges to zero because
$\dist_{\D}(Tx_k)\to\delta=1$, and
$|f(x_k)-f(x_{\C^\dagger}^\star)|$, showing convergence to the secondary
minimizer predicted by \thmref{thm:inconsistent_indicator_subsequential}.  In
this example, $f$ is convex on $\C^\dagger$, so stationarity implies
optimality.

\begin{figure}[H]
  \centering
  \includegraphics[width=\textwidth]{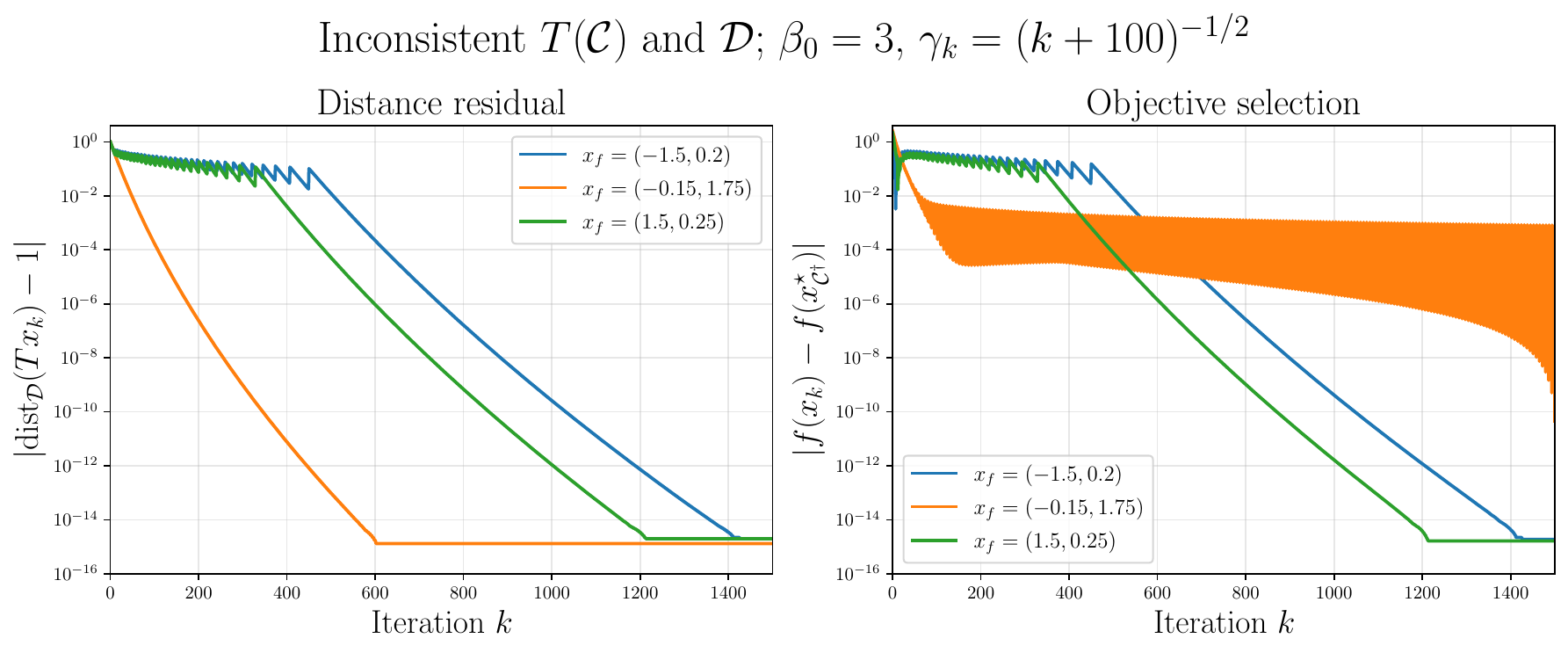}
  \caption{
    Convergence profiles for \FRAMES\ applied to the inconsistent problem where $\delta=1$.
  }
  \label{fig:inconsistent_linf_residuals}
\end{figure}

\section{Conclusion}
We have shown how to extend Frank-Wolfe to the nonsmooth nonconvex composite setting using the Moreau envelope. Our analysis reveals a delicate relationship between the step size and the smoothing parameter, with both having a strong effect on the rate of convergence for the Frank-Wolfe gap of the original problem \eqref{P}. Larger smoothing parameters make the smoothed objectives easier to optimize but give poorer approximations of the original nonsmooth problem, while smaller smoothing parameters improve the approximation at the cost of worse smoothness constants and slower convergence of the stationarity criterion for \eqref{P}. In future work, we would like to extend these ideas and their analysis to some unconstrained analogs of Frank-Wolfe, such as normalized steepest descent \cite{boyd2004convex}. We also think it would be fruitful to develop a short step analysis of nonsmooth Frank-Wolfe. A main limitation of the finite-time Lipschitz-case gap-transfer result is the proximal-lift assumption, which requires a feasible point $z\in\C$ satisfying $Tz=\prox_{\beta g}(Tx)$. Removing or weakening this assumption is an important direction for future work.


\backmatter

\bmhead{Acknowledgements}

This work was supported by the French National Research Agency (ANR) under grant ANR-25-CE23-3749 (project SIMPLES). The work of ZW was supported by the National Science Foundation under Grant DMS-2532423.

\bmhead{Competing Interests} 

Not applicable.

\bmhead{Availability of Data and Materials}

All code is available on \href{https://github.com/tonysf/frankwolfe}{GitHub}.

\bibliography{bib_mp}

@article{loca,
	Adsurl = {https://ui.adsabs.harvard.edu/abs/2019arXiv190110348L},
	Archiveprefix = {arXiv},
	Author = {{Locatello}, Francesco and {Yurtsever}, Alp and {Fercoq}, Olivier and {Cevher}, Volkan},
	Eid = {arXiv:1901.10348},
	Eprint = {1901.10348},
	Journal = {arXiv e-prints},
	Month = {Jan},
	Pages = {arXiv:1901.10348},
	Primaryclass = {math.OC},
	Title = {{Stochastic Conditional Gradient Method for Composite Convex Minimization}},
	Year = {2019}}

@article{Baus99,
  title={Strong conical hull intersection property, bounded linear regularity, {J}ameson's property ({G}), and error bounds in
convex optimization},
  author={Bauschke, Heinz H. and Borwein, Jonathan M. and Li, Wu},
  journal={Math. Program., Ser. A},
  volume={86},
  number={4},
  pages={135–160},
  year={1999},
  publisher={Springer-Verlag}
}

@article{frankwolfe,
	Author = {M. Frank and P. Wolfe},
	Journal = {Naval research logitistics quarterly},
	Number = {1-2},
	Pages = {95-110},
	Title = {An algorithm for quadratic programming},
	Volume = {3},
	Year = {1956}}

@inproceedings{lacoste2015,
	Author = {Lacoste-Julien, S. and Jaggi, M.},
	Booktitle = {NIPS},
	Pages = {496--504},
	Title = {On the global linear convergence of {F}rank-{W}olfe optimization variants},
	Year = {2015}}

@misc{prox-op.net,
	Author = {G. Chierchia and E. Chouzenoux and P. L. Combettes and J.-C. Pesquet},
	Howpublished = {https://proximity-operator.net/},
	Title = {The {P}roximity {O}perator {R}epository}}

@article{FW.jl,
  title={{FrankWolfe.jl}: A High-Performance and Flexible Toolbox for {Frank-Wolfe} Algorithms and Conditional Gradients},
  author={Besan{\c{c}}on, Mathieu and Carderera, Alejandro and Pokutta, Sebastian},
  journal={INFORMS Journal on Computing},
  year={2022},
  publisher={INFORMS}
}

@book{CGsurvey,
  title={Conditional Gradient Methods: From Core Principles to AI Applications},
  author={Braun, G{\'a}bor and Carderera, Alejandro and Combettes, Cyrille W and Hassani, Hamed and Karbasi, Amin and Mokhtari, Aryan and Pokutta, Sebastian},
  year={2025},
  publisher={SIAM},
  address = {Philadelphia}
}

@book{rockafellar1998variational,
  title={Variational Analysis},
  author={Rockafellar, R Tyrrell and Wets, Roger J-B},
  volume={317},
  year={2009},
  publisher={Springer Science \& Business Media},
  address={Berlin}
}

@article{Pier84,
  title={Decomposition through formalization in a product space},
  author={Pierra, Guy},
  journal={Math. Program.},
  volume={28},
  pages={96--115},
  year={1984},
  publisher={Springer}
}

@book{bauschke2017convex,
  title={Convex Analysis and Monotone Operator Theory in Hilbert Spaces},
  author={Bauschke, Heinz H. and Combettes, Patrick L.},
  edition={2},
  year={2017},
  publisher={Springer},
  address={Cham},
  series={CMS Books in Mathematics},
  isbn={978-3-319-48311-5},
  doi={10.1007/978-3-319-48311-5}
}

@article{woodstock2025splitting,
  title={Splitting the conditional gradient algorithm},
  author={Woodstock, Zev and Pokutta, Sebastian},
  journal={SIAM Journal on Optimization},
  volume={35},
  number={1},
  pages={347--368},
  year={2025},
  publisher={SIAM}
}

@article{argyriou2014hybrid,
  title={Hybrid conditional gradient-smoothing algorithms with applications to sparse and low rank regularization},
  author={Argyriou, Andreas and Signoretto, Marco and Suykens, Johan},
  journal={Regularization, Optimization, Kernels, and Support Vector Machines},
  pages={53--82},
  year={2014},
  publisher={CRC Press Boca Raton, FL}
}

@inproceedings{yurtsever2019conditional,
  title={A conditional-gradient-based augmented {L}agrangian framework},
  author={Yurtsever, Alp and Fercoq, Olivier and Cevher, Volkan},
  booktitle={International Conference on Machine Learning},
  pages={7272--7281},
  year={2019},
  organization={PMLR}
}

@inproceedings{yurtsever2018conditional,
  title={A conditional gradient framework for composite convex minimization with applications to semidefinite programming},
  author={Yurtsever, Alp and Fercoq, Olivier and Locatello, Francesco and Cevher, Volkan},
  booktitle={International conference on machine learning},
  pages={5727--5736},
  year={2018},
  organization={PMLR}
}

@article{silveti2020generalized,
  title={Generalized conditional gradient with augmented {L}agrangian for composite minimization},
  author={Silveti-Falls, Antonio and Molinari, Cesare and Fadili, Jalal},
  journal={SIAM Journal on Optimization},
  volume={30},
  number={4},
  pages={2687--2725},
  year={2020},
  publisher={SIAM}
}

@article{bohm2021variable,
  title={Variable smoothing for weakly convex composite functions},
  author={B{\"o}hm, Axel and Wright, Stephen J},
  journal={Journal of optimization theory and applications},
  volume={188},
  number={3},
  pages={628--649},
  year={2021},
  publisher={Springer}
}

@inproceedings{halbey2025efficient,
  year = {2025},
  booktitle = {Proceedings of the Conference on Neural Information Processing Systems},
  month = sep,
  volume = {38},
  archiveprefix = {arXiv},
  eprint = {2506.02635},
  arxiv = {arXiv:2506.02635},
  primaryclass = {math.OC},
  author = {Halbey, Jannis and Rakotomandimby, Seta and Besançon, Mathieu and Designolle, Sébastien and Pokutta, Sebastian},
  title = {Efficient Quadratic Corrections for Frank-Wolfe Algorithms},
  date = {2025-06-03}
}

@article{mazanti2025nonsmooth,
  title={A nonsmooth {F}rank--{W}olfe algorithm through a dual cutting-plane approach},
  author={Mazanti, Guilherme and Moquet, Thibault and Pfeiffer, Laurent},
  journal={Journal of Optimization Theory and Applications},
  volume={207},
  number={2},
  pages={29},
  year={2025},
  publisher={Springer}
}

@article{fan2001variable,
  title={Variable selection via nonconcave penalized likelihood and its oracle properties},
  author={Fan, Jianqing and Li, Runze},
  journal={Journal of the American statistical Association},
  volume={96},
  number={456},
  pages={1348--1360},
  year={2001},
  publisher={Taylor \& Francis}
}

@book{boyd2004convex,
  title={Convex optimization},
  author={Boyd, Stephen and Vandenberghe, Lieven},
  year={2004},
  publisher={Cambridge university press},
  address = {Cambridge}
}

@book{clarke1990optimization,
  title={Optimization and nonsmooth analysis},
  author={Clarke, Frank H},
  year={1990},
  publisher={SIAM},
  address={Philadelphia}
}

@article{hoheisel2010proximal,
  title={On proximal point-type algorithms for weakly convex functions and their connection to the backward euler method},
  author={Hoheisel, Tim and Laborde, Maxime and Oberman, Adam},
  journal={Optimization Online},
  year={2010}
}

@article{guelat1986some,
  title={Some comments on Wolfe's ‘away step’},
  author={Gu{\'e}lat, Jacques and Marcotte, Patrice},
  journal={Mathematical Programming},
  volume={35},
  number={1},
  pages={110--119},
  year={1986},
  publisher={Springer}
}

@inproceedings{combettes2020boosting,
  title={Boosting {F}rank--{W}olfe by chasing gradients},
  author={Combettes, Cyrille and Pokutta, Sebastian},
  booktitle={International conference on machine learning},
  pages={2111--2121},
  year={2020},
  organization={PMLR}
}

@article{combettes2021complexity,
  title={Complexity of linear minimization and projection on some sets},
  author={Combettes, Cyrille W and Pokutta, Sebastian},
  journal={Operations Research Letters},
  volume={49},
  number={4},
  pages={565--571},
  year={2021},
  publisher={Elsevier}
}

@article{woodstock2026high,
  title={High-precision linear minimization is no slower than projection},
  author={Woodstock, Zev},
  journal={Optimization Letters},
  pages={1--7},
  year={2026},
  publisher={Springer}
}

@article{bolte2024iterates,
  title={The iterates of the {F}rank--{W}olfe algorithm may not converge},
  author={Bolte, J{\'e}r{\^o}me and Combettes, Cyrille W and Pauwels, Edouard},
  journal={Mathematics of Operations Research},
  volume={49},
  number={4},
  pages={2565--2578},
  year={2024},
  publisher={INFORMS}
}

@article{silveti2021inexact,
  title={Inexact and stochastic generalized conditional gradient with augmented {L}agrangian and proximal step},
  author={Silveti-Falls, Antonio and Molinari, Cesare and Fadili, Jalal},
  journal={Journal of Nonsmooth Analysis and Optimization},
  volume={2},
  number={Original research articles},
  year={2021},
  publisher={Episciences. org}
}

@inproceedings{thekumparampil2020optimal,
  title={Optimal nonsmooth {F}rank--{W}olfe method for stochastic regret minimization},
  author={Thekumparampil, Kiran Koshy and Jain, Prateek and Netrapalli, Praneeth and Oh, Sewoong},
  booktitle={12th Annual Workshop on Optimization for Machine Learning},
  year={2020}
}

@article{thekumparampil2020projection,
  title={Projection efficient subgradient method and optimal nonsmooth {F}rank--{W}olfe method},
  author={Thekumparampil, Kiran K and Jain, Prateek and Netrapalli, Praneeth and Oh, Sewoong},
  journal={Advances in neural information processing systems},
  volume={33},
  pages={12211--12224},
  year={2020}
}

@article{amsel2025polar,
  title={The polar express: Optimal matrix sign methods and their application to the muon algorithm},
  author={Amsel, Noah and Persson, David and Musco, Christopher and Gower, Robert M},
  journal={arXiv preprint arXiv:2505.16932},
  year={2025}
}

@book{gillis2020nonnegative,
  title={Nonnegative matrix factorization},
  author={Gillis, Nicolas},
  year={2020},
  publisher={SIAM},
  address = {Philadelphia}
}

@article{wang2016trend,
  title={Trend filtering on graphs},
  author={Wang, Yu-Xiang and Sharpnack, James and Smola, Alexander J and Tibshirani, Ryan J},
  journal={Journal of Machine Learning Research},
  volume={17},
  number={105},
  pages={1--41},
  year={2016}
}

@inproceedings{fan2022graph,
  title={Graph trend filtering networks for recommendation},
  author={Fan, Wenqi and Liu, Xiaorui and Jin, Wei and Zhao, Xiangyu and Tang, Jiliang and Li, Qing},
  booktitle={Proceedings of the 45th international ACM SIGIR conference on research and development in information retrieval},
  pages={112--121},
  year={2022}
}

@article{kim2009ell_1,
  title={$\ell_1$ trend filtering},
  author={Kim, Seung-Jean and Koh, Kwangmoo and Boyd, Stephen and Gorinevsky, Dimitry},
  journal={SIAM review},
  volume={51},
  number={2},
  pages={339--360},
  year={2009},
  publisher={SIAM}
}

@article{ravi2019deterministic,
  title={A deterministic nonsmooth {F}rank--{W}olfe algorithm with coreset guarantees},
  author={Ravi, Sathya N and Collins, Maxwell D and Singh, Vikas},
  journal={Informs Journal on Optimization},
  volume={1},
  number={2},
  pages={120--142},
  year={2019},
  publisher={INFORMS}
}

@inproceedings{cheung2018solving,
  title={Solving Separable Nonsmooth Problems Using {F}rank--{W}olfe with Uniform Affine Approximations.},
  author={Cheung, Edward and Li, Yuying},
  booktitle={IJCAI},
  pages={2035--2041},
  year={2018}
}

@article{asgari2024nonsmooth,
  title={Nonsmooth projection-free optimization with functional constraints},
  author={Asgari, Kamiar and Neely, Michael J},
  journal={Computational Optimization and Applications},
  volume={89},
  number={3},
  pages={927--975},
  year={2024},
  publisher={Springer}
}

@article{kreimeier2024frank,
  title={On a {F}rank--{W}olfe approach for abs-smooth functions},
  author={Kreimeier, Timo and Pokutta, Sebastian and Walther, Andrea and Woodstock, Zev},
  journal={Optimization Methods and Software},
  pages={1--27},
  year={2024},
  publisher={Taylor \& Francis}
}

@article{de2023short,
  title={Short Paper-A note on the {F}rank--{W}olfe algorithm for a class of nonconvex and nonsmooth optimization problems},
  author={De Oliveira, Welington},
  journal={Open Journal of Mathematical Optimization},
  volume={4},
  pages={1--10},
  year={2023}
}

@article{lan2016conditional,
  title={Conditional gradient sliding for convex optimization},
  author={Lan, Guanghui and Zhou, Yi},
  journal={SIAM Journal on Optimization},
  volume={26},
  number={2},
  pages={1379--1409},
  year={2016},
  publisher={SIAM}
}

@article{ouyang2023universal,
  title={Universal conditional gradient sliding for convex optimization},
  author={Ouyang, Yuyuan and Squires, Trevor},
  journal={SIAM Journal on Optimization},
  volume={33},
  number={4},
  pages={2962--2987},
  year={2023},
  publisher={SIAM}
}

@article{ito2023parameter,
  title={A parameter-free conditional gradient method for composite minimization under H{\"o}lder condition},
  author={Ito, Masaru and Lu, Zhaosong and He, Chuan},
  journal={Journal of Machine Learning Research},
  volume={24},
  number={166},
  pages={1--34},
  year={2023}
}

@article{zhang2010nearly,
  title={Nearly unbiased variable selection under minimax concave penalty},
  author={Zhang, Cun-Hui},
  year={2010}
}

\end{document}